\theoremstyle{plain}%
\newtheorem{theorem}{Theorem}%
\newtheorem{corollary}{Corollary}%
\newtheorem{lemma}{Lemma}%
\newtheorem{proposition}{Proposition}%
\theoremstyle{definition}%
\newtheorem{definition}{Definition}%
\newtheorem{example}{Example}%
\newtheorem{remark}{Remark}%
\numberwithin{theorem}{section}%
\numberwithin{corollary}{section}%
\numberwithin{lemma}{section}%
\numberwithin{definition}{section}%
\numberwithin{example}{section}%
\numberwithin{remark}{section}%
\numberwithin{proposition}{section}%
\newcommand{\smallspace}{\,}%
\newcommand{\mediumspace}{~}%
\newcommand{\largespace}{\quad}%
\newcommand{\productspacing}{\,}%
\newcommand{\delimiterspacing}{\,}%
\newcommand{\period}{\smallspace .}%
\newcommand{\comma}{\smallspace ,}%
\newcommand{\emphasis}[1]{\textit{#1}}
\DeclarePairedDelimiterX{\@of}[1]{(}{)}{#1}%
\newcommand{\of}{\@of*}%
\DeclarePairedDelimiterX{\@norm}[1]{\lVert}{\rVert}{#1}%
\newcommand{\norm}{\@norm*}%
\DeclarePairedDelimiterX{\@abs}[1]{\lvert}{\rvert}{#1}%
\newcommand{\abs}{\@abs*}%
\DeclarePairedDelimiterX{\@bra}[1]{[}{]}{#1}%
\newcommand{\bra}{\@bra*}%
\DeclarePairedDelimiterX{\@sset}[1]{\{}{\}}{#1}%
\newcommand{\sset}{\@sset*}%
\DeclarePairedDelimiterX{\@set}[2]{\{}{\}}{#1 \smallspace \delimsize\vert \smallspace #2}%
\newcommand{\set}{\@set*}%
\DeclarePairedDelimiterX{\@cond}[2]{[}{]}{#1 \smallspace \delimsize\vert \smallspace #2}%
\newcommand{\cond}{\@cond*}%
\DeclarePairedDelimiterX{\@sca}[2]{\langle}{\rangle}{#1 \smallspace \delimsize\vert \smallspace #2}%
\newcommand{\sca}{\@sca*}%
\newcommand{\too}[2][]{\xrightarrow[#1]{#2}}%
\newcommand{\diffelement}{\mathrm{d}}%
\newcommand{\dd}[2][]{\smallspace \diffelement #1 #2}%
\newcommand{\pardiff}[2][]{\mathchoice{\frac{\partial^{#1}}{\partial #2^{#1}}}{\partial^{#1}_{#2}}{\partial^{#1}_{#2}}{\partial^{#1}_{#2}}}%
\newcommand{\pardiffQ}[3][]{\mathchoice{\frac{\partial^{#1} #2}{\partial #3^{#1}}}{\partial^{#1}_{#3} #2}{\partial^{#1}_{#3} #2}{\partial^{#1}_{#3} #2}}%
\newcommand{\diff}[2][]{\mathchoice{\frac{\diffelement^{#1}}{\diffelement #2^{#1}}}{\diffelement^{#1}_{#2}}{\diffelement^{#1}_{#2}}{\diffelement^{#1}_{#2}}}%
\newcommand{\diffQ}[3][]{\mathchoice{\frac{\diffelement^{#1} #2}{\diffelement #3^{#1}}}{\diffelement^{#1}_{#3} #2}{\diffelement^{#1}_{#3} #2}{\diffelement^{#1}_{#3} #2}}%
\newcommand{\e}[2][]{\exp_{#1}\of{#2}}
\newcommand{\R}{\mathbb{R}}%
\newcommand{\N}{\mathbb{N}}%
\newcommand{\Pb}{\mathbb{P}}%
\newcommand{\E}{\mathbb{E}}%
\newcommand{\C}{\mathbb{C}}%
\newcommand{\1}{\mathbf{1}}%
\newcommand{\FF}{\mathcal{F}}%
\newcommand{\CC}{\mathcal{C}}%
\newcommand{\DD}{\mathcal{D}}%
\newcommand{\GG}{\mathcal{G}}%
\newcommand{\Sch}{\mathcal{S}}%
\newcommand{\ZZ}{\mathcal{Z}}%
\newcommand{\KK}{\mathcal{K}}%
\newcommand{\mellin}{{\mathcal{M}}}%
\newcommand{\fourier}{{\mathcal{F}}}%
\newcommand{\laplace}{{\mathcal{L}}}%
\newcommand{\ffi}{\varphi}%
\newcommand{\RE}{\operatorname{Re}}%
\begin{document}

\title%[Fractional It\^o Calculus for RSGPs  and Applications]
{Fractional It\^o Calculus for Randomly Scaled Fractional Brownian Motion and its Applications to  Evolution Equations}

\author{
Yana A. Butko\footnote{Kassel University, Institute of Mathematics, Heinrich-Plett-Str. 40, 34132 Kassel, Germany, \texttt{kinderknecht@mathematik.uni-kassel.de}}~ 
and Merten Mlinarzik\footnote{Technical University of Braunschweig, Institute for Analysis and Algebra, Universit\"atsplatz 2,
38106 Braunschweig, Germany, \texttt{m.mlinarzik@tu-braunschweig.de}}
}

\maketitle
\begin{abstract}
We define a  fractional It\^o stochastic integral with respect to a randomly scaled fractional Brownian motion via an $S$-transform approach. We investigate the properties of this stochastic integral, prove the It\^o formula for functions of such stochastic integrals and  apply this It\^o formula for investigation of related generalized time-fractional evolution  equations. 
\end{abstract}

%\tableofcontents

%%%% Main Matters
\section{Introduction}
This  research is motivated by the \emphasis{superstatistical fractional Brownian motion (FBM) model} of anomalous diffusion~\cite{mura_etal-jpa-2008,molina_etal-pre-2016,mackala_etal-pre-2019,
itto_etal-jrsi-2021}.  
Under anomalous diffusion one understands diffusion type phenomena which occur qualitatively faster or slower than the classical diffusion and/or which are governed by non-Gaussian processes.
The experimental evidence of anomalous diffusion in living systems has been definitively established (see, e.g.,~\cite{dieterich_etal-pnas-2008,vanhaastert_etal-pnas-2009,cherstvy_etal-pccp-2018,matthaus_etal-bj-2009,wu_etal-pnas-2014,alves_etal-plosone-2016,souza_etal-plosone-2017,golding_etal-prl-2006, weiss_etal-bj-2004}).
In particular, anomalous diffusion has been reported  (see, e.g.,  \cite{barkai_etal-pt-2012,hofling_etal-rpp-2013,manzo_etal-rpp-2015}) by analysing  the celebrated dataset of measurements of the motion of individual fluorescently labeled mRNA molecules inside live E. coli cells by Golding \& Cox \cite{golding_etal-prl-2006}.
The superstatistical FBM model of anomalous diffusion considers this phenomenon as a result of inhomogeniety of the ensemble of diffusing particles (what is reasonable in the case of diffusion of complex macromolecules with different individual charachteristics) and inhomogeniety of the environment. This model suggests, as an underlying process for each single diffusing ``test-particle'', a process $X = \of{X_t}_{t \geq 0}$ of the form
	\begin{align}\label{eq:X}
		X_t \coloneqq \sqrt{A}B^H_t \comma \largespace t \geq 0
		\comma
	\end{align}
where $B^H = \of{B^H_t}_{t \geq 0}$ is an FBM with Hurst parameter $H \in \of{0,1}$ and $A$ is a positive random variable which is stochastically independent from $B^H$. 
The FBM $B^H$ is responsible for  the nonlinearity of the variance in time and arises due to inhomogeniety of the environment. Physical origin of FBM in this context has been  mathematically established in~\cite{BBDP}.
The random variable $A$ plays the role of a (random) diffusion coefficient of an inhomogeneous diffusing test-particle, and  is responsible for the non-Gaussianity of the process $X$.  
In this respect, we remind that the experimental evidence of a population of diffusion coefficients has been reported, e.g., in~\cite{mackala_etal-pre-2019,grimes_etal-c-2023,manzo_etal-prx-2015},
the FBM has been experimentally verified as  the underlying stochastic motion in many living systems, see, e.g.~\cite{magdziarz_etal-prl-2009,szymanski_etal-prl-2009,weiss-pre-2013}.   Note also that the superstatistical FBM is a self-similar process with stationary increments what makes such model  attractive for proceeding numerical simulations. 
\par
Superstatistical FBM models~\eqref{eq:X} belong to the family  of \emphasis{Randomly Scaled Gaussian Processes (RSGPs)} with $B^H$ as the Gaussian part and the random scaling $\sqrt{A}$. 
In the present paper, we call the processes of the form~\eqref{eq:X} \emphasis{Randomly Scaled FBM}. 
The processes of the form~\eqref{eq:X} with  different choices for the distribution of the random variable $A$ 
have been actively discussed and investigated in the literature nowadays, in particular in relation to evolution equations with time-fractional derivatives.
Historically, the first representative of this class of processes is \emphasis{Grey Brownian Motion (GBM)}, introduced by Schneider as a stochastic solution of time-fractional evolution equation~~\cite{MR1124240,MR1190506,zbMATH04027994,zbMATH04132860}. 
The Laplace transform of the corresponding random variable $A$ is given by the Mittag-Leffler function $E_\beta\of{-\cdot}$ with $\beta \coloneqq 2H$, $H \in \of{0,1/2}$.
Nowadays, the family of  randomly scaled  FBMs  includes the \emphasis{Generalised Grey Brownian Motion (GGBM)}~\cite{mura_etal-jpa-2008,MR2588003,MR2501791} (which provides a stochastic solution to an Erd\'elyi--Kober fractional-type differential equation~\cite{zbMATH06194179} and is largely stuided including, e.g., local time~\cite{zbMATH06467391}, structure factors~\cite{MR3956716}, stochastic currents~\cite{arXiv:2408.10936},
integral representation~\cite{zbMATH07553643} and the Feynman-Kac formula~\cite{BBB22}), processes governed by the space-time fractional diffusion equation~\cite{MR3513003} or by evolution equations containing Marichev--Saigo--Maeda time-fractional operators~\cite{BB22}. 
Highly general Fokker--Planck equations for a wide class of superstatiscal FBM models  have been recently derived in~\cite{zbMATH07893183}.  
Randomly scaled FBMs have been actively investigated in the framework of non-Gaussian counterparts of White Noise Analysis: the Grey Noise Analysis  by Schneider (for GBM)~\cite{MR1124240,MR1190506}, 
the Mittag--Leffler Analysis (for GGBM)~\cite{GJ15,GJ16} and the Gamma-Grey Analysis (for the so-called \emphasis{Gamma-Grey Brownian Motion})~\cite{BCG23}.  
Furthermore, randomly scaled FMBs whose random ``diffusion coefficient'' $A$  follows the  generalized Gamma distribution~\cite{Sposini_2018} (in particular, the Weibull distribution~\cite{PhysRevE.99.012143}), or the log-normal distribution~\cite{dossantos_etal-p-2020},  have been considered as suitable physical models of anomalous diffusion.
\par
One of the ways to obtain the relation between stochastic processes and evolution equations (in terms of Feynman-Kac type formulas) is given via Stochastic Calculus.
Since FBM is not a semimartingale, It\^o’s stochastic integration theory cannot be applied. 
Therefore, FBM has been treated in the frame of other types of integration theories (see the overview in Introduction of~\cite{B03a} for further information).
In the paper~\cite{B03a}, the integration theory for FBM has been developed in the frame of White Noise Analysis. 
Moreover, in the case of ``sufficiently good'' integrands, the integration theory can be formulated in a much simpler way by means of the so-called \emphasis{$S$-transform}~\cite{B03b}. 
And the resulting \emphasis{fractional It\^o stochastic integral} is essentially the same integral as its counterparts constructed in the frame of White Noise Analysis, or in the frame of Malliavin Calculus.
\par
In the present paper, we extend the $S$-transform approach of Bender~\cite{B03b} to the case of randomly scaled FBM. 
We suggest suitable extensions of the notions of Wick exponential and $S$-transform, define a fractional It\^o stochastic integral with respect to a randomly scaled FBM, investigate the properties of this stochastic integral, prove the It\^o formula for functions of such stochastic integrals (in the case of ``nice'' integrands). 
Finally, we apply this It\^o formula for investigation of related evolution  equations.  
In particular, we restore several known relations between evolution equations and randomly scaled FBMs (established in~\cite{BB22,BCG23,RVP22}) within our setting.

\section{Preliminaries}
Let $L^2\of{\R}$ denote the space of Borel-measurable and square integrable functions on $\R$ and denote the usual norm on $L^2\of{\R}$ by $\norm{\cdot}_{2}$.
We define two fractional operators. 
Let $H \in \of{0,1}$ and let the strictly positive constant $K_H$ be given by
	\begin{align*}
		K_H 
		\coloneqq 
		\Gamma\of{H + \frac{1}{2}} \of{\int_0^\infty \of{1 + s}^{H - 1/2} - s^{H - 1/2}\dd{s} + \frac{1}{2H}}^{-1/2} 
		\period
	\end{align*}	
We define the linear operators $\of{M^H_\pm,\DD\of{M^H_\pm}}$ acting on the set $\mathcal{M}\of{\R}$ of Borel-measurable functions through their actions. 
For $H \in \of{\frac{1}{2}, 1}$, we define the \emphasis{fractional integral of Weyl's type}
	\begin{align*}
		\of{M_{-}^{H} f}\of{x} 
		\coloneqq 
		\frac{K_H}{\Gamma \of{H - 1/2}} \int_{x}^{\infty} f\of{t} \of{t - x}^{H - 3/2} \dd{t}
	\end{align*}
and  
	\begin{align*}
		\of{M_{+}^{H} f}\of{x} 
		\coloneqq  
		\frac{K_H}{\Gamma \of{H - 1/2}} \int_{-\infty}^{x} f\of{t} \of{x - t}^{H - 3/2} \dd{t} 
		\period
	\end{align*}
In this case, the domain $\DD\of{M_{\pm}^{H}}$ is composed of all $f \in \mathcal{M}\of{\R}$ for which the respective integrals exist almost surely.
For $H \in \of{0, \frac{1}{2}}$, we define the \emphasis{fractional derivative of Marchaud's type}
	\begin{align*}
		M_\pm^H f 
		\coloneqq 
		\lim_{\epsilon \to 0} D^H_{\pm , \epsilon}f 
		\comma
	\end{align*} 
where
	\begin{align*}
		\of{D^H_{\pm , \epsilon}f}\of{x} 
		\coloneqq 
		\frac{K_H \of{1/2 - H} }{\Gamma \of{1/2 + H}} \int_{\epsilon}^{\infty} \frac{f\of{x} - f\of{x \mp t}}{t^{3/2 - H}} \dd{t} 
		\period
	\end{align*}
Here the domain $\DD\of{M^H_\pm}$ is composed of all $f \in \mathcal{M}\of{\R}$ for which the above limit exists almost surely. 
Lastly, for $H = 1/2$, we take $M^{1/2}_{\pm}$ to be the identity operator.
An important class of functions in $\DD\of{M^H_-}$ are the indicator functions of intervals  
	\begin{align*}
		\1_{\of{s,t}}\of{x} 
		\coloneqq
		\begin{cases}
			1  , & \text{if } s \leq x < t, \\
			-1 , & \text{if } t \leq x < s, \\
			0  , & \text{otherwise},
		\end{cases}
	\end{align*} 
with $\1_{\of{s,t}} \in L^2\of{\R}$ for $s$, $t \in \R$.
\begin{lemma} \label{result:MNorm}
	For $H \in \of{0,1}$ with $H \neq 1 / 2$ and $s, t \in \R$ the function $\1_{\of{s,t}}$ is a member of $\DD\of{M_-^H}$, and $M_-^H \1_{\of{s,t}}$ is given by 
		\begin{align*}
			\of{M_-^H \1_{\of{s,t}}}\of{x} 
			= 
			\frac{K_H}{\Gamma\of{H + 1/2}} \of{\of{t - x}^{H - 1/2}_+ - \of{s - x}^{H - 1/2}_+} 
			\period
		\end{align*}
		In particular, $M_-^H {\1}_{\of{s,t}}$ is an $L^2\of{\R}$ function with the norm 
		\begin{align*}
			\norm{M_-^H  {\1}_{\of{s,t}}}_{2} 
			= 
			\abs{t-s}^H 
			\period
		\end{align*}
\end{lemma}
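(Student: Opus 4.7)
The plan is to compute $\of{M_-^H \1_{\of{s,t}}}\of{x}$ explicitly, pointwise in $x$, and then read off the $L^2$ norm from the resulting closed form. By the antisymmetry of $\1_{\of{s,t}}$ under exchange of $s$ and $t$ together with the linearity of $M_-^H$, I can reduce to the case $s < t$; the case $s > t$ follows by a sign flip in both terms. The two regimes $H \in \of{1/2,1}$ (Weyl integral) and $H \in \of{0,1/2}$ (Marchaud derivative) are handled separately.

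For $H \in \of{1/2, 1}$, I would substitute $f = \1_{\of{s,t}}$ into the Weyl-type integral and evaluate $\int_x^\infty \1_{\of{s,t}}\of{u} \of{u - x}^{H - 3/2} \dd{u}$ case by case, depending on whether $x \geq t$, $s \leq x < t$, or $x < s$. The antiderivative $\of{u - x}^{H - 1/2}/\of{H - 1/2}$, evaluated at the relevant endpoints in each case, assembles into the single expression $\of{t - x}^{H - 1/2}_+ - \of{s - x}^{H - 1/2}_+$, and the identity $\Gamma\of{H + 1/2} = \of{H - 1/2} \Gamma\of{H - 1/2}$ absorbs the factor $1/\of{H - 1/2}$ into the prefactor, yielding the stated formula.

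For $H \in \of{0, 1/2}$, I would apply Marchaud's formula. The same three regions of $x$ arise, but care is required with the limit $\epsilon \to 0$: on the region $s \leq x < t$ the integrand $\bra{f\of{x} - f\of{x + u}} u^{H - 3/2}$ vanishes for $0 < u < t - x$, which cancels the non-integrable singularity at $u = 0$, so the limit is finite. Evaluating the elementary integrals that remain on each region (using that $u^{H - 1/2} \to 0$ at infinity since $H < 1/2$) and simplifying via $\frac{K_H \of{1/2 - H}}{\Gamma\of{1/2 + H}\of{H - 1/2}} = -\frac{K_H}{\Gamma\of{H + 1/2}}$ produces the same closed form as in the first case.

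For the $L^2$ norm, I would plug the closed form into $\norm{M_-^H \1_{\of{s,t}}}_2^2$ and change variables $y = t - x$, setting $a \coloneqq t - s > 0$, to obtain $\int_0^\infty \bra{y^{H - 1/2} - \of{y - a}^{H - 1/2}_+}^2 \dd{y}$. Splitting at $y = a$, the first piece equals $\int_0^a y^{2H - 1}\dd{y} = a^{2H}/\of{2H}$, while the dilation $y = a r$ followed by the shift $r \mapsto r - 1$ on the second piece reduces it to $a^{2H} \int_0^\infty \bra{\of{1 + r}^{H - 1/2} - r^{H - 1/2}}^2 \dd{r}$. The prefactor $K_H^2 / \Gamma\of{H + 1/2}^2$ then exactly cancels the bracket appearing in the definition of $K_H$, so the whole expression collapses to $a^{2H}$ and the norm equals $\abs{t - s}^H$. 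The only real hurdle in the whole argument is the bookkeeping in the Marchaud regime; no deeper ideas are required.
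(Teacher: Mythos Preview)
Your direct computation is correct. The paper itself does not prove this lemma: it cites Lemma~1.1.3 and Remark~1.1.4 of \cite{M08} for the closed form and points to p.~364 of the same source for the norm, so there is no in-text argument to compare against. What you have written out is presumably the content of that reference, and is strictly more self-contained than what the paper offers.

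One remark on your final step: the cancellation you invoke requires the integrand in the definition of $K_H$ to be $\bra{(1+s)^{H-1/2} - s^{H-1/2}}^2$, i.e.\ squared. The paper's displayed formula for $K_H$ omits this square, but that is evidently a typo on the paper's side (for $H > 1/2$ the unsquared integral diverges, and for any $H \neq 1/2$ the claimed norm identity fails without the square), not a flaw in your argument.
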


\begin{proof}
	See Lemma~1.1.3 and Remark~1.1.4 in~\cite{M08}. 
	The $L^2$-norm of $M_{-}^{H}  {\1}_{\of{s,t}}$ can be calculated in a similar way as on page~364 of~\cite{M08}.
\end{proof}
\par
Recall that a \emphasis{fractional Brownian motion} $B^H = \of{B_t^H}_{t \in \R}$ with \emphasis{Hurst parameter} $H \in \of{0,1}$ is a centered Gaussian process with continuous paths and covariance structure 
	\begin{align*}
		\operatorname{Cov}\bra{B_t^H,B_s^H} 
		= 
		\frac{1}{2}\of{\abs{t}^{2H} + \abs{s}^{2H} - \abs{t - s}^{2H}} 
		\period
	\end{align*}
One can may always assume $B_0^H = 0$.
One can construct a fractional Brownian motion from a two-sided Brownian motion $B$ using the notion of the \emphasis{Wiener integral} of a function $f \in L^2\of{\R}$,
	\begin{align*}
		I\of{f} 
		\coloneqq 
		\int_\R f\of{t} \dd{B_t}
		\period
	\end{align*}
Indeed, a probability space  carrying a two-sided Brownian motion $B$ always carries a fractional Brownian motion,
given by a continuous version of the process $\of{I\of{M_-^H \1_{\of{0,t}}}}_{t \in \R}$, 
with arbitrary Hurst parameter $H \in \of{0,1}$. 
In particular $B^H$ is measurable, with respect to $B$. 
In the sequel, we use the concept of Pettis integration.
\par
Let $\mathcal{H}$ be a separable Hilbert space with scalar product $\sca{\cdot}{\cdot}$ and induced norm $\norm{\cdot}$. 
Furthermore, let $M \subset \R$ be a Borel set and let $f : M \to \mathcal{H}$ be a function. 
We call $f$ \emphasis{Pettis measurable} if the mappings
	\begin{align}\label{eq:mappings}
		M \to \R \comma \largespace t \mapsto \sca{f\of{t}}{\psi}
	\end{align}
are (Borel) measurable for all $\psi \in \mathcal{H}$.
Note, if there is  a dense subset $D$ of $\mathcal{H}$ such that the mappings~\eqref{eq:mappings} are measurable for all $\psi \in D$, then $f$ is already Pettis measurable.
For a Pettis measurable function $f : M \to \mathcal{H}$, 
we say that $f$ is \emphasis{Pettis integrable} if the mappings~\eqref{eq:mappings} are in $L^1\of{M}$ for all $\psi \in \mathcal{H}$. 
In this case, there exists a uniquely determined element $F \in \mathcal{H}$ with
	\begin{align*}
		\sca{F}{\psi} = 
		\int_M \sca{f\of{t}}{\psi} \dd{t} 
	\end{align*}
for all $\psi \in \mathcal{H}$. 
We call $F$ the \emphasis{Pettis integral} of $f$ and simply write
	\begin{align*}
		F 
		= 
		\int_M f\of{t} \dd{t} 
		\period
	\end{align*}
The existence and uniqueness of $F$ are a consequence of the Riesz representation theorem. 
An useful criterion for Pettis integrability of a function is given by integrability of its norm function.

\begin{proposition} \label{result:pettisExist}
	Let $M \subset \R$ be a Borel set and let $f : M \to \mathcal{H}$ be a Pettis measurable function. 
	Furthermore, let the norm mapping
		\begin{align*}
			M \to \R \comma \largespace t \mapsto \norm{f\of{t}}
		\end{align*}
		be in $L^1\of{M}$. 
		Then $f$ is Pettis integrable and we have the following norm estimate for the integral
		\begin{align*}
			\norm{\int_M f\of{t} \dd{t}} 
			\leq 
			\int _M \norm{f\of{t}} \dd{t} 
			\period
		\end{align*}
\end{proposition}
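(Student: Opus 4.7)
The plan is to verify, in order, the three ingredients hidden in the statement: (a) for each $\psi \in \mathcal{H}$ the scalar mapping $t \mapsto \sca{f\of{t}}{\psi}$ actually lies in $L^1\of{M}$, (b) there exists an element $F \in \mathcal{H}$ representing the linear form $\psi \mapsto \int_M \sca{f\of{t}}{\psi} \dd{t}$, and (c) the resulting $F$ satisfies the stated norm bound.

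For step (a), I would apply the Cauchy--Schwarz inequality pointwise in $t$ to get
\begin{align*}
\abs{\sca{f\of{t}}{\psi}} \leq \norm{f\of{t}} \norm{\psi}
\comma
\end{align*}
so that integrability of $t \mapsto \norm{f\of{t}}$ on $M$ forces $t \mapsto \sca{f\of{t}}{\psi}$ to be integrable with $L^1$ norm at most $\norm{\psi} \int_M \norm{f\of{t}} \dd{t}$. Pettis measurability supplies the measurability of this scalar map, so it indeed belongs to $L^1\of{M}$. This directly fits the definition of Pettis integrability given in the text.

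For step (b), I would define the functional $T : \mathcal{H} \to \R$ by $T\of{\psi} \coloneqq \int_M \sca{f\of{t}}{\psi} \dd{t}$. Linearity of $T$ follows from linearity of the scalar product in the second argument combined with the linearity of the Lebesgue integral. Boundedness follows from the estimate obtained in step (a), which gives $\abs{T\of{\psi}} \leq \norm{\psi} \int_M \norm{f\of{t}} \dd{t}$, so $T \in \mathcal{H}^*$ with $\norm{T} \leq \int_M \norm{f\of{t}} \dd{t}$. The Riesz representation theorem then produces a unique $F \in \mathcal{H}$ with $\sca{F}{\psi} = T\of{\psi}$ for all $\psi \in \mathcal{H}$; this is exactly the Pettis integral $F = \int_M f\of{t} \dd{t}$.

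For step (c), the norm bound can be read off directly by testing against $\psi = F$: one has $\norm{F}^2 = \sca{F}{F} = \int_M \sca{f\of{t}}{F} \dd{t} \leq \norm{F} \int_M \norm{f\of{t}} \dd{t}$, again via Cauchy--Schwarz under the integral sign. If $\norm{F} = 0$ the desired inequality is trivial, and otherwise we divide by $\norm{F}$ to conclude. There is no genuine obstacle in this argument, but the single point requiring a little care is making sure the Pettis measurability hypothesis really yields the measurability needed to apply Fubini-style reasoning and Cauchy--Schwarz under the integral in the last display; this is precisely what the hypothesis was built to guarantee.
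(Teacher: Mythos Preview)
Your argument is correct and is exactly the standard proof of this classical fact. The paper itself states the proposition without proof, treating it as a known result, so there is no ``paper's own proof'' to compare against; your three-step outline via Cauchy--Schwarz and the Riesz representation theorem is precisely how one establishes it. One minor remark: in step~(c) you can alternatively read off the bound $\norm{F} \leq \int_M \norm{f\of{t}} \dd{t}$ directly from the Riesz isometry $\norm{F} = \norm{T}_{\mathcal{H}^*}$ together with the operator-norm estimate already obtained in step~(b), which avoids the small case distinction on $\norm{F} = 0$.
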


\begin{remark} \label{result:pettisInter}
	One can easily  see that the Pettis integral interchanges with the scalar product, 
	and we have for a Pettis integrable function $f : M \to \mathcal{H}$ and $\psi \in \mathcal{H}$ that
		\begin{align*}
			\int_M \sca{f\of{t}}{\phi} \dd{t} 
			= 
			\sca{\int_M f(t) \dd{t}}{\psi} 
			\period
		\end{align*}
\end{remark}
\par
Another concept needed for the following discussion is the Laplace transform of a probability measure on $\of{0,\infty}$.
Indeed let $\of{\of{0,\infty}, \mathcal{B}\of{0,\infty},\mu}$ be a probability space. 
We then denote the \emphasis{Laplace transform} of $\mu$ by 
	\begin{align*}
		\laplace\bra{\mu} \smallspace : \smallspace 
		\left[0,\infty\right) \to \R \comma \largespace 
		t \mapsto \int_0^{\infty} \e{-t x} \smallspace\dd[\mu]{\of{x}} 
		\period
	\end{align*} 
Accordingly, we understand the Laplace transform of a random variable $A$ with values in $\of{\of{0,\infty}, \mathfrak{B}\of{0,\infty}}$ as  the Laplace transform of its distribution $\Pb^A=\Pb\circ A^{-1}$,
	\begin{align*}
		\laplace\bra{A} 
		\coloneqq 
		\laplace\bra{\Pb^A} 
		\period
	\end{align*}
Obviously, $\laplace\bra{\mu}\of{t} \in \of{0,1}$ for $t \in \of{0,\infty}$,
 as well as $\laplace\bra{\mu}\of{0} = 1$. 
Furthermore, $\laplace\bra{\mu}$ is continuous on $[0,\infty)$ and infinitely often differentiable on $\of{0,\infty}$ with 
	\begin{align*}
		\diffQ[n]{\laplace\bra{\mu}}{t}\of{t}
		= 
		\int_0^{\infty} \of{-x}^n \e{-t x} \smallspace\dd[\mu]{\of{x}}
	\end{align*}
for all $t \in \of{0,\infty}$. 
Moreover, by the Bernstein theorem, a function $\ffi : \of{0,\infty} \to \C$ is the Laplace transform of a (unique) probability measure $\mu$ on $\of{\of{0,\infty},\mathcal{B}\of{0,\infty}}$ if and only if $\ffi\of{0} = 1$ and $\ffi$ is a completely monotone function, that is 
	\begin{align*}
		\of{-1}^n \diffQ[n]{\ffi}{t}\of{t} \in [0,\infty)
	\end{align*}
for all $t > 0$ and $n \in \N_0$. 
A completely monotone function $\ffi \coloneqq \laplace\bra{\mu}$ extends to a holomorphic function on $\set{z \in \C}{\RE z > 0}$. 
If, moreover, $\ffi$ extends to a holomorphic function on an open ball $B_R(0) \coloneqq \set{z \in \C}{\abs{z} < R}$, then the integral
	\begin{align*}
		\int_0^{\infty} \e{-z x} \dd[\mu]{\of{x}}
	\end{align*}
is finite for all $z$ with $\RE z \in(-R,0)$ and evaluates to $\ffi\of{z}$ on $B_R(0)$ by the Raikov theorem (cf. Theorem~3.1.2 in~\cite{L60}).

\section{Fractional It\^{o} Integral with respect to a Randomly Scaled Fractional Brownian Motion} \label{section:frac_integral}
We now consider a probability space $\of{\Omega \comma \FF \comma \Pb}$ carrying a two-sided Brownian motion $B$ together with an independent and strictly positive random variable $A$. 
Furthermore, we assume that there is a maximal (and possibly infinite) radius $R_A > 0$ such that the Laplace transform of $A$ is holomorphic on the open ball $B_{R_A}\of{0} \subset \mathbb{C}$. 
We denote by 
	\begin{align*}
		L^2_A \coloneqq L^2\of{\Omega , \sigma\of{B,A} , \Pb}
	\end{align*}
the space of square integrable random variables which are measurable with respect to $B$ and $A$, equipped with the usual norm $\norm{\cdot}_{L^2_A}$. Like in the previous section, let $I$ denote the Wiener integral with respect to $B$ and with values in $L^2_A$.

\begin{definition} \label{definition:WickExp}
	We define the set $D_A$ of admissible test-functions by
		\begin{align*}
			D_A 
			\coloneqq 
			\set{\eta \in \Sch\of{\R}}{2 \norm{\eta}^2_{2} < R_A}
			\subset 
			L^2\of{\R} 
			\period
		\end{align*}		
We then define the \emphasis{Wick-$A$ exponential} of a function $\eta \in D_A$ as 
		\begin{align*}
			W_A\of{\eta} 
			\coloneqq 
			\e{\sqrt{A} I\of{\eta}} 
			\productspacing
			{\E\bra{\e{\sqrt{A} I\of{\eta}}}}^{-1}
			\period
		\end{align*}
\end{definition}

For any $\eta \in D_A$, the normalizing factor in $W_A\of{\eta}$ is given by
	\begin{align*}
		\E\bra{\e{\sqrt{A} I\of{\eta}}} 
		&=
		\E\bra{  \E\cond{\e{\sqrt{A}I\of{\eta}}}{A}} 
		\\ &= 
		\E\bra{\e{\frac{A}{2} \norm{\eta}_{2}^2}} 
		=
		\laplace\bra{A}\of{-\frac{1}{2}\norm{\eta}_{2}^2} 
		\period 
	\end{align*}	 
Hence, for  $ \eta \in D_A$, we have
	\begin{align*}
		\norm{W_A\of{\eta}}_{L^2_A}^2 
		= 
		\laplace\bra{A}\of{-2\norm{\eta}_{2}^2}\laplace
		\productspacing
		\bra{A}\of{-\frac{1}{2}\norm{\eta}_{2}^2}^{-2}
	\end{align*}
and hence the mapping
	\begin{align*}
		W_A \smallspace : \smallspace 
		D_A \to L^2_A \comma \largespace 
		\eta \mapsto W_A\of{\eta}	
	\end{align*}
is a (non-linear) continuous mapping.
We will now use the class of Wick-$A$ exponentials as test functions on the space $L^2_A$.

\begin{definition}[$S_A$-transform]
	For $\Theta \in L^2_A$ we define the \emphasis{$S_A$-transform} of $\Theta$ as the \emphasis{non-linear} functional $S_A\bra{\Theta}$ on the set of admisible test-functions defined by 
	\begin{align*}
		S_A\bra{\Theta} \smallspace : \smallspace 
		D_A \to \R \comma \largespace 
		\eta \mapsto \E\bra{\Theta W_A\of{\eta}} 
		\period
	\end{align*}
\end{definition}

In other words, the $S_A$-transform of a random variable $\Theta$ tests $\Theta$ against the class of Wick-$A$ exponentials of admissible Schwartz functions. 
We will see in the next theorem that this test class is large enough to determine the tested functions uniquely in $L^2_A$. 

\begin{theorem} \label{result:SAInj}
	For $\Theta$ and $\Phi$ in $L^2_A$ we have that $S_A\bra{\Theta}(\eta) = S_A\bra{\Phi}(\eta)$ for all $\eta\in D_A$ if and only if $\Theta = \Phi$.
\end{theorem}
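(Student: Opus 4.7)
The plan is to reduce the statement to a uniqueness claim for the class of Wick-$A$ exponentials, then combine an analytic continuation into a complex tube, Fourier inversion on a finite-dimensional Gaussian image, and a recovery of the scaling variable $A$ via the strong law of large numbers. Since $\Theta \mapsto S_A\bra{\Theta}\of{\eta}$ is linear for each fixed $\eta \in D_A$ and the normalising factor $\laplace\bra{A}\of{-\tfrac{1}{2}\norm{\eta}_2^2}$ is a strictly positive number depending only on $\eta$, the theorem reduces to showing: if $\E\bra{\Theta\,\e{\sqrt{A}I\of{\eta}}} = 0$ for every $\eta \in D_A$, then $\Theta = 0$ in $L^2_A$.

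For fixed $\eta_1,\ldots,\eta_n \in \Sch\of{\R}$ I would consider the complex tube
	\begin{align*}
		T
		\coloneqq
		\set{\alpha \in \C^n}{2\norm{\sum\nolimits_{j=1}^n \RE\of{\alpha_j}\,\eta_j}_2^2 < R_A}
	\end{align*}
together with the function
	\begin{align*}
		F\of{\alpha_1,\ldots,\alpha_n}
		\coloneqq
		\E\bra{\Theta\,\e{\sqrt{A}I\of{\sum_{j=1}^n \alpha_j \eta_j}}}
		\comma
		\largespace
		\alpha \in T \period
	\end{align*}
Since $\abs{\e{\sqrt{A}I\of{\sum_j \alpha_j \eta_j}}} = \e{\sqrt{A}I\of{\sum_j \RE\of{\alpha_j}\eta_j}}$, Cauchy--Schwarz combined with the identity $\E\bra{\e{2\sqrt{A}I\of{\xi}}} = \laplace\bra{A}\of{-2\norm{\xi}_2^2}$ produces an $L^1$-majorant on every compact sub-tube of $T$, so differentiation under the expectation shows that $F$ is holomorphic on $T$. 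By hypothesis $F$ vanishes on the non-empty open real slice $T \cap \R^n$, and since $T$ is a connected tube domain the identity theorem yields $F \equiv 0$ on $T$. Specialising to $\alpha_j = i t_j$ gives
	\begin{align*}
		\E\bra{\Theta\,\e{i\sqrt{A}\sum_{j=1}^n t_j I\of{\eta_j}}} = 0
		\largespace
		\text{for all } t \in \R^n \period
	\end{align*}

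Setting $Y \coloneqq \sqrt{A}\of{I\of{\eta_1},\ldots,I\of{\eta_n}}$, the last display is the vanishing Fourier transform of the finite signed measure $B \mapsto \E\bra{\Theta\,\1_B\of{Y}}$ on $\R^n$, hence Fourier uniqueness forces $\E\cond{\Theta}{Y} = 0$ almost surely. I would then choose an orthonormal basis $\of{\eta_j}_{j \geq 1}$ of $L^2\of{\R}$ consisting of Schwartz functions (the Hermite functions, say); the $I\of{\eta_j}$ are then i.i.d.\ standard Gaussians independent of $A$, so the strong law of large numbers applied conditionally on $A$ yields
	\begin{align*}
		\frac{1}{n}\sum_{j=1}^n \of{\sqrt{A}\,I\of{\eta_j}}^2
		=
		A \cdot \frac{1}{n}\sum_{j=1}^n I\of{\eta_j}^2
		\too{n \to \infty}
		A \quad\text{a.s.}
	\end{align*}
Hence $A$, and therefore each $I\of{\eta_j}$, is measurable with respect to $\sigma\of{\sqrt{A}I\of{\eta_j} \smallspace : \smallspace j \geq 1}$, which consequently coincides with $\sigma\of{A,B}$ modulo null sets. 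The L\'evy martingale convergence theorem then gives
	\begin{align*}
		\E\cond{\Theta}{\sqrt{A}I\of{\eta_1},\ldots,\sqrt{A}I\of{\eta_n}}
		\too{n \to \infty}
		\E\cond{\Theta}{\sigma\of{A,B}} = \Theta
		\largespace
		\text{in } L^2_A\comma
	\end{align*}
and, by the preceding paragraph, the left-hand side is identically zero. Hence $\Theta = 0$.

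The delicate part, where I expect most of the work to live, is the justification of the holomorphy of $F$ on $T$: integrability along the imaginary directions is unrestricted, while along the real directions it is controlled by the Laplace transform $\laplace\bra{A}$, so the correct domain of analytic continuation is not a polydisc but a tube with the real part of $\alpha$ constrained. The Wirtinger-style dominated-convergence estimates must be tailored to this specific shape in order to reach the imaginary axis needed for the Fourier step, and it is precisely the hypothesis that $\laplace\bra{A}$ be holomorphic on $B_{R_A}\of{0}$ that makes this extension available.
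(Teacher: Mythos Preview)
Your argument is correct and follows essentially the same route as the paper's: analytic continuation from real to purely imaginary test directions to obtain a vanishing Fourier transform, hence $\E\cond{\Theta}{\FF_n} = 0$ for the filtration $\FF_n = \sigma\bigl(\sqrt{A}I(\eta_1),\ldots,\sqrt{A}I(\eta_n)\bigr)$, and then martingale convergence. The only differences are cosmetic: the paper performs the continuation with a single complex scaling parameter $z$ along a fixed real direction $t$ (so only one-variable complex analysis is invoked, which sidesteps the tube-domain identity theorem you would need to justify), and it simply asserts that $\bigvee_n \FF_n = \sigma(A,B)$, whereas your strong-law-of-large-numbers recovery of $A$ from $\tfrac{1}{n}\sum_j (\sqrt{A}I(\eta_j))^2$ supplies that step explicitly.
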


\begin{proof}
	As the $S_A$-transform is linear with respect to $\Theta$, 
	it suffices to show that $S_A\bra{\Theta}\of{\eta} = 0$ for all $\eta$ in  $D_A$ already implies $\Theta = 0$. 
	Hence, let $\Theta \in L^2_A$ be such that $S_A\bra{\Theta}(\eta)$ vanishes for all $\eta\in D_A$.
	Furthermore, let $\sset{\xi_n}_{n \in \N} \subset \mathcal{S}\of{\R}$ be an orthonormal basis of $L^2\of{\R}$, 
	and define an increasing sequence of $\sigma$-algebras $(\FF_n)_{n\in\N}$ by 
		\begin{align*}
			\mathcal{F}_n 
			\coloneqq 
			\sigma\of{\set{\sqrt{A} I\of{\xi_k}}{k \leq n}} \period
		\end{align*}	 
	Since the  $\sigma$-algebra $\sigma\of{A,B}$ of the space $L^2_A$ is generated by the random variables of type $\sqrt{A} I\of{\eta}$ with $\eta \in \mathcal{S}\of{\R}$, the sets $\FF_n$ increase to $\sigma\of{A,B}$.
	Further, consider the  conditional expected values of $\Theta$, denoted by 
		\begin{align*}
			\Theta_n 
			\coloneqq 
			\E\bra{\Theta \smallspace \vert \smallspace \FF_n} \in L^2_A
			\comma
		\end{align*}
	and show that $\Theta_n = 0$ for all $n \in \N$. 
	To that end, we fix an $n \in \N$ and choose a Borel-measurable function $\theta_n : \R^n \to \overline{\R}$ with 
	\begin{align*}
		\Theta_n 
		= 
		\theta_n\of{\sqrt{A}I\of{\xi_1}, \ldots, \sqrt{A}I\of{\xi_n}}
	\end{align*}
	via the factorisation lemma.
	As the $\xi_k$ are orthonormal, 
	we have that the Wiener integrals $I\of{\xi_k}$ are i.i.d. with standard normal distribution. 
	In other words, the random vector $\of{I\of{\xi_1} , \ldots , I\of{\xi_n}}$ has a standard ($n$-dimensional) normal distribution with Lebesgue density 
		\begin{align*}
			\rho_n\of{x} 
			\coloneqq 
			\of{2\pi}^{-n/2}\exp\of{-\frac{1}{2}\abs{x}^2}
			\period
		\end{align*}	
	Respectively, the randomly scaled random vector $\of{\sqrt{A}I\of{\xi_1}, \ldots, \sqrt{A}I\of{\xi_n}}$ has the Lebesgue density
		\begin{align*}
			\rho_n^{A}\of{x} 
		 	\coloneqq 
		 	\E\bra{A^{-n/2}\rho_n\of{A^{-1/2}x}}
		\end{align*}
	as the diffusion coefficient $A$ is independent from all $I\of{\xi_k}$.
	For $t = \of{t_1 , \ldots , t_n} \in \R^n$ with $2\abs{t}^2 < R_A$, 
	we now define $\eta_t$  as the linear combination 
		\begin{align*}
			\eta_t 
			\coloneqq 
			\sum_{k = 1}^n t_k \xi_k 
			\period
		\end{align*}	 
	Hence $\eta_t \in D_A$. And with the law of total expectation
		\begin{align*}
			0 
			&= 
			\E\bra{\e{\sqrt{A} I\of{\eta_t}}}
			\productspacing S_A\bra{\Theta}\of{\eta_t}
			= 
			\E\bra{\Theta \e{\sqrt{A} I\of{\eta_t}}} 
			\\ &= 
			\E\bra{\Theta_n \e{\sqrt{A} I\of{\eta_t}}} 
			=
			\int_{\R^n} \theta_n\of{x} \rho_n^{A}\of{x} \e{t \cdot x} \dd{x}
			\period
		\end{align*}
	Since the last integral in the above equation exists and vanishes for all $t$ with $2\abs{t}^2 < R_A$,  
	we can conclude that for such a fixed $t$ the mapping 
		\begin{align*}
			\set{z \in \C}{2 \abs{\RE z}^2 < R_A \abs{t}^{-2}} \to \mathbb{C} \comma \largespace 
			z \mapsto \int_{\R^n} \theta_n\of{x} \rho_n^{A}\of{x} \e{z t \cdot x} \dd{x}
		\end{align*}
	is holomorphic and vanishing. 
	As a consequence, the Fourier transform of the function $\theta_n \rho_n^{A}$ must vanish on $\R^n$. 
	The strict positivity of $\rho_n^{A}$ then already implies $\theta_n = 0$ and consequently $\Theta_n = 0$. 
	As $\FF_n$ increases to $\sigma\of{B, A}$, we also have that $\Theta = 0$.
	This concludes the proof.
\end{proof}

In the case when $\laplace[A] = E_\beta(-\cdot)$, 
where $E_\beta$ is the Mittag-Leffler function with parameter $\beta\in(0,1]$,
the $S_A$-transform is equivalent to $S_\beta$-transform introduced in~\cite{GJ15, GJ16}. 
In the case when the distribution of $A$ is the Dirac delta-measure concentrated at the point $1$, 
i.e. $A \equiv 1$, the $S_A$-transform coincides with the \emphasis{$S$-transform} as defined, e.g., in~\cite{B03b}. 
Moreover, in general, we can express the $S_A$-transform in terms of the $S$-transform. 
This expression will allow us to make use of the unscaled theory developed in~\cite{B03b} in the case of a non-trivial diffusion coefficient. 
For that we will denote by $B_{\bullet}$ the Brownian motion as a random element on $\mathcal{C}(\R)$, i.e. 
	\begin{align*}
		B_{\bullet} \smallspace : \smallspace 
		\Omega \to \mathcal{C}\of{\R} \comma \largespace 
		\omega \mapsto B_{\cdot}\of{\omega} \period
	\end{align*}

\begin{theorem} \label{result:SAtoS}
	Let $\Theta \in L^2_A$ and let $\theta : \R \times \mathcal{C}\of{\R} \to \overline{\R}$ be a Borel-measurable function such that $\Theta = \theta\of{A,B_{\bullet}}$.
	Then the $S_A$ transform of $\Theta$ can be represented as
		\begin{align*}
			S_A\bra{\Theta}\of{\eta} 
			= 
			\E\bra{\of{\vphantom{\Big\vert} S\bra{\theta\of{a, B_{\bullet}}}\of{\sqrt{a} \eta}}_{a = A}
			\productspacing \varepsilon_A^\eta\of{A}}
		\end{align*}
	for all $\eta \in D_A$. 
	The function $\varepsilon_A^\eta : \of{0,\infty} \to \R$ is given by
		\begin{align*}
			\varepsilon_A^\eta\of{a} 
			\coloneqq 
			\e{\frac{a}{2} \norm{\eta}_{2}^2}
			\productspacing
			\E\bra{\e{\sqrt{A} I\of{\eta}}}^{-1}
			\period
	\end{align*}
\end{theorem}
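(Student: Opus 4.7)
The plan is to unfold $S_A\bra{\Theta}(\eta)$ directly from its definition, insert the assumed factorisation $\Theta = \theta(A, B_\bullet)$, and then condition on $A$ in order to identify the inner expectation as the classical $S$-transform evaluated at $\sqrt{a}\,\eta$.

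By Definition~\ref{definition:WickExp} and the definition of the $S_A$-transform,
\[
S_A\bra{\Theta}(\eta) \;=\; \E\bra{\Theta\,W_A(\eta)} \;=\; \frac{\E\bra{\theta(A,B_\bullet)\,\e{\sqrt{A}\,I(\eta)}}}{\E\bra{\e{\sqrt{A}\,I(\eta)}}}\period
\]
Both expectations are finite: the condition $\eta\in D_A$ means $2\norm{\eta}_2^2 < R_A$, and conditioning on $A$ as in the computation preceding Theorem~\ref{result:SAInj} gives $\E\bra{\e{2\sqrt{A}I(\eta)}} = \laplace\bra{A}(-2\norm{\eta}_2^2) < \infty$, so $\e{\sqrt{A}I(\eta)} \in L^2_A$, and Cauchy--Schwarz combined with $\Theta \in L^2_A$ makes the numerator integrable.

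The key step is a Fubini/independence argument. Since $A$ is independent of $B$ and $(a,\omega) \mapsto \theta(a,B_\bullet(\omega))\,\e{\sqrt{a}I(\eta)(\omega)}$ is jointly Borel-measurable, Fubini gives
\[
\E\bra{\theta(A,B_\bullet)\,\e{\sqrt{A}I(\eta)}} \;=\; \E\bra{g(A)}\comma\largespace g(a) \coloneqq \E\bra{\theta(a,B_\bullet)\,\e{\sqrt{a}I(\eta)}}\comma
\]
where the expectation defining $g(a)$ is taken over $B$ alone with $a$ held fixed. Factoring out the deterministic Wick normaliser $\E\bra{\e{I(\sqrt{a}\eta)}} = \e{\frac{a}{2}\norm{\eta}_2^2}$ identifies
\[
g(a) \;=\; \e{\frac{a}{2}\norm{\eta}_2^2}\,S\bra{\theta(a,B_\bullet)}(\sqrt{a}\,\eta)\period
\]
Dividing through by $\E\bra{\e{\sqrt{A}I(\eta)}}$ and recognising $\varepsilon_A^\eta(a)= \e{\frac{a}{2}\norm{\eta}_2^2}\,\E\bra{\e{\sqrt{A}I(\eta)}}^{-1}$ produces the claimed identity.

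The main obstacle is the measurability and integrability bookkeeping around this Fubini step. One must check that $a \mapsto g(a)$, and therefore $a \mapsto S\bra{\theta(a,B_\bullet)}(\sqrt{a}\,\eta)$, is Borel-measurable on $(0,\infty)$, and that the iterated integrals agree with the joint one. Joint Borel-measurability of the integrand together with the $L^2_A$-integrability already established is enough to invoke Fubini and to get measurability of $g$; the remainder of the proof is purely algebraic manipulation of normalising constants.
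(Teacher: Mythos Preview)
Your proof is correct and follows essentially the same route as the paper: unfold the definition, condition on $A$ (equivalently, apply Fubini using the independence of $A$ and $B$), and identify the inner expectation as the classical $S$-transform at $\sqrt{a}\,\eta$ times the Gaussian normaliser. The paper's only additional remark, which you gesture at but do not make fully explicit, is that $\theta(a,B_\bullet)\in L^2(\Omega,\sigma(B),\Pb)$ for $\Pb^A$-almost every $a$ (since $\E\bigl[\|\theta(a,B_\bullet)\|_{L^2}^2\bigr]_{a=A}=\|\Theta\|_{L^2_A}^2<\infty$), so that the classical $S$-transform $S[\theta(a,B_\bullet)]$ is actually defined.
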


\begin{proof}
	Notice that $\theta\of{a,B_{\bullet}}$ is $\sigma\of{B}$-measurable for all $a > 0$, and it holds
		\begin{align*}	\E\bra{\of{\norm{\theta\of{a,B_{\bullet}}}_{L^2\of{\Omega , \sigma(B) , \Pb}}^2}_{a = A}} 
			= 
			\norm{\Theta}_{L^2_A}^2 
			< 
			\infty 
			\period
		\end{align*}
	Thus, $\theta\of{a,B_{\bullet}} \in L^2\of{\Omega , \sigma\of{B} , \Pb}$ for $\Pb^A$-almost all $a > 0$, and we have
		\begin{align*}
			S_A\bra{\Theta}\of{\eta} 
			&= 
			\E\bra{ \vphantom{\Big\vert} \smallspace \E\cond{\Theta W_A\of{\eta}}{A}} 
			\\ &=
			\E\bra{ \vphantom{\Big\vert} \smallspace \E\bra{ \vphantom{\Big\vert}\theta\of{a,B_{\bullet}} \e{\sqrt{a}I\of{\eta}}}_{a = A}}
			\productspacing
			\E\bra{\e{\sqrt{A} I\of{\eta}}}^{-1}
			\\ &=	
			\E\bra{\of{ \vphantom{\Big\vert} S\bra{\theta\of{a,B_\bullet}} \e{\frac{a}{2}\norm{\eta}_{2}^2}}_{a = A}}
			\productspacing
			\E\bra{\e{\sqrt{A} I\of{\eta}}}^{-1}
			\\ &=	
			\E\bra{\of{ \vphantom{\Big\vert} S\bra{\theta\of{a , B_{\bullet}}}\of{\sqrt{a} \eta}}_{a = A}
			\productspacing
			\varepsilon_A^\eta\of{A}}
			\period
		\end{align*}
\end{proof}

We are now able to define a generalization of the fractional It\^{o} integral with respect to fractional Brownian motion as in~\cite{B03b} to an integral with respect to a randomly scaled fractional Brownian motion.
For that we construct a fractional Brownian motion $B^H$ with an arbitrary Hurst parameter $H \in \of{0,1}$ on $\of{\Omega , \FF , \Pb}$ as described in the preliminaries. We will then denote the \emphasis{randomly scaled fractional Brownian motion} resulting from scaling $B^H$ with $\sqrt{A}$ by 
	\begin{align*}
		X 
		\coloneqq 
		X^{A,H} \coloneqq \sqrt{A}B^H 
		\period
	\end{align*}
As there is no danger of confusion, we will be omitting the superscripts of $X$ in the following.

\begin{remark} 
	The randomly scaled fractional Brownian motion $X$ is a mean-zero process with continuous paths and covariance structure
		\begin{align*}
			\operatorname{Cov}\bra{X_t, X_s} 
			= 
			\E\bra{A} \operatorname{Cov}\bra{B_t^H,B_s^H} 
			= 
			\frac{1}{2} \E\bra{A} \of{\abs{t}^{2H} + \abs{s}^{2H} - \abs{t - s}^{2H}} 
			\period
		\end{align*}
	Notice however that, in general, $X$ is not a Gaussian process. 
\end{remark}

We may now give the central definition of this paper.

\begin{definition}[Fractional Integral (\textit{1\textsuperscript{st} Approach})] \label{definition:fracInt}
	Let $M \subset \R$ be a Borel set and let $Z : M \to L^2_A$ be a process. 
	Furthermore, suppose that 
		\begin{align*}
			S_A\bra{A Z_t}\of{\eta} \of{M_+^{H}\eta}\of{t} \in L^1\of{M}
		\end{align*}
	with respect to $t$ for all $\eta \in D_A$. 
	We then call $Z$ \emphasis{(fractionally) integrable} with respect to $X$ if and only if there is an element $\Theta \in L^2_A$ such that
		\begin{align*}
			S_A\bra{\Theta}\of{\eta} 
			= 
			\int_M S_A\bra{A Z_t}\of{\eta} 
			\of{M_+^{H}\eta}\of{t} \dd{t}
		\end{align*}
	holds for all $\eta \in D_A$. 
	By the injectivity of the $S_A$-transform (Theorem~\ref{result:SAInj}), $\Theta$ is uniquely determined (and independent of the specific choice of $R_A$) should it exist. 
	In that case, we call $\Theta$ the \emphasis{fractional It\^{o}   integral} of $Z$ and write
		\begin{align*}
			\Theta 
			= 
			\int_M Z_t \dd{X_t} 
			\period
	\end{align*}
\end{definition}

\begin{theorem}
	Let $M \subset \R$ be a Borel set and let $Z,U : M \to L^2_A$ both be integrable with respect to $X$. 
	The fractional It\^{o} integral has the following properties :
		\begin{itemize}
			\item[$\mathrm{(i)}$] 
			The integral is linear, i.e. for $\alpha, \beta \in \R$ we have 
				\begin{align*}
					\int_M (\alpha Z_t + \beta U_t ) \dd{X_t} 
					= 
					\alpha \int_M Z_t \dd{X_t} + \beta \int_M U_t \dd{X_t} 
					\period
				\end{align*}	
			\item[$\mathrm{(ii)}$] 
			Let $M^{\prime} \subset M$ be a Borel subset of $M$. Then 
				\begin{align*}
					\int_{M^{\prime}} Z_t \dd{X_t} 
					= 
					\int_M Z_t \1_{M^{\prime}}\of{t} \dd{X_t}  
					\comma
				\end{align*}	
			whenever at least one of the integrals exists.
		 	\item[$\mathrm{(iii)}$] 
		 	The integral has zero expectation, i.e.
				\begin{align*}
					\E\bra{\int_M Z_t \dd{X_t}} 
					=
					0 
					\period
				\end{align*}
	\end{itemize}
\end{theorem}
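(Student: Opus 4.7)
The plan is to verify each property by computing the $S_A$-transform of both sides and invoking the injectivity of $S_A$ from Theorem~\ref{result:SAInj}. Since the defining identity
\begin{align*}
S_A\bra{\Theta}\of{\eta} = \int_M S_A\bra{A Z_t}\of{\eta} \of{M_+^{H}\eta}\of{t} \dd{t}
\end{align*}
characterises the fractional It\^o integral uniquely, each property should fall out by checking that the two candidate random variables produce equal $S_A$-transforms on every $\eta \in D_A$.

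For property $\mathrm{(i)}$, I would first note that for any fixed $\eta \in D_A$ the map $\Theta \mapsto S_A\bra{\Theta}\of{\eta} = \E\bra{\Theta W_A\of{\eta}}$ is $\R$-linear in $\Theta$. Consequently $S_A\bra{A\of{\alpha Z_t + \beta U_t}}\of{\eta} = \alpha S_A\bra{A Z_t}\of{\eta} + \beta S_A\bra{A U_t}\of{\eta}$. Plugging this into the defining integral, splitting the sum and using linearity of the Lebesgue integral, I see that $\alpha \int_M Z_t \dd{X_t} + \beta \int_M U_t \dd{X_t}$ satisfies the defining identity for the candidate integrand $\alpha Z_t + \beta U_t$; Theorem~\ref{result:SAInj} then forces equality.

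For property $\mathrm{(ii)}$, the key observation is that the scalar $\1_{M'}\of{t}$ factors out of the $S_A$-transform: $S_A\bra{A Z_t \1_{M'}\of{t}}\of{\eta} = \1_{M'}\of{t} S_A\bra{A Z_t}\of{\eta}$. Thus
\begin{align*}
\int_M S_A\bra{A Z_t \1_{M'}\of{t}}\of{\eta} \of{M_+^{H}\eta}\of{t} \dd{t} = \int_{M'} S_A\bra{A Z_t}\of{\eta} \of{M_+^{H}\eta}\of{t} \dd{t}
\end{align*}
for every $\eta \in D_A$, so the $L^1$-integrability condition required in Definition~\ref{definition:fracInt} transfers between the two integrals simultaneously. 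Whenever one of them exists, its $S_A$-transform equals the common right-hand side, and injectivity again yields the claimed equality of random variables.

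For property $\mathrm{(iii)}$, I would evaluate the defining identity at the admissible test function $\eta = 0 \in D_A$. On the one hand $I\of{0} = 0$ and $\E\bra{\e{0}} = 1$, so $W_A\of{0} \equiv 1$ and therefore $S_A\bra{\Theta}\of{0} = \E\bra{\Theta}$. On the other hand $M_+^{H}$ is linear, so $M_+^{H}\of{0} = 0$ identically on $\R$, and the right-hand integral vanishes. Comparing the two expressions immediately gives $\E\bra{\int_M Z_t \dd{X_t}} = 0$. I do not anticipate a genuine obstacle here; the only subtlety is the permissibility of choosing $\eta = 0$ in $D_A$, which is obvious from $2\norm{0}_2^2 = 0 < R_A$, and the linearity of $M_+^{H}$ on the zero function, which holds trivially in all three regimes $H < 1/2$, $H = 1/2$, $H > 1/2$ of the definition.
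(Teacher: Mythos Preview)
Your proposal is correct and follows essentially the same approach as the paper: each property is established by comparing $S_A$-transforms and invoking Theorem~\ref{result:SAInj}, with (iii) handled by evaluating at $\eta = 0$. Your write-up even supplies a bit more detail than the paper's version (e.g.\ explicitly checking $0 \in D_A$ and $M_+^H 0 = 0$), but the underlying argument is identical.
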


\begin{proof}
	The linearity of the integral immediately follows from the definition, 
	since both the $S_A$-transform and the ordinary Lebesgue integral are linear. 
	Regarding the second statement we see that 
		\begin{align*}
			S_A\bra{\int_M Z_t \1_{M^{\prime}}\of{t} \dd{X_t}}\of{\eta} 
			&= 
			\int_M S_A\bra{A Z_t \1_{M^{\prime}}\of{t}}\of{\eta}  
			\of{M_+^{H}\eta}\of{t} \dd{t} 
			\\ &= 
			\int_M S_A\bra{A  Z_t}\of{\eta} 		
			\of{M_+^{H}\eta}\of{t}
			\productspacing 
			\1_{M^{\prime}}\of{t} 	\dd{t} 
			\\ &=
			\int_{M^{\prime}} S_A\bra{A Z_t}\of{\eta} \of{M_+^{H}\eta}\of{t} \dd{t}	
			=
			S_A\bra{\int_{M^\prime} Z_t \dd{X_t}}\of{\eta}
		\end{align*}
	holds for all $\eta \in D_A$ as long as one of the fractional It\^{o} integrals exists.
	Thus, by definition,
		\begin{align*}
			\int_{M^{\prime}} Z_t \dd{X_t} 
			= 
			\int_M Z_t \1_{M^{\prime}}\of{t} \dd{X_t} 
		\end{align*}
	if at least one fractional It\^{o} integral exists.
	Finally, the third statement follows from the fact that the expected value of the integral coincides with the $S_A$-transform of the integral evaluated at $\eta = 0$.
\end{proof}

For the trivial diffusion coefficient $A = 1$ this construction coincides with the fractional integral with respect to $B^H$ as constructed in~\cite{B03b}.
Under suitable conditions, we can express the integral with respect to $X$ in terms of the integral with respect to $B^H$.

\begin{theorem} \label{result:integralXtoBH}
	Let $M \subset \R$ be a Borel set and let $Z : M \to L^2_A$ be a process. 
	For all $t \in M$ let $z_t : \R \times \CC\of{\R} \to \overline{\R}$ be a (Borel-)measurable function such that $Z_t = z_t\of{A,B_{\bullet}}$. 
	If the process $\left(z_t\of{a,B_{\bullet}}\right)_{t\in M}$ is fractionally integrable with respect to $B^H$ for $\Pb^A$-almost all $a > 0$ then $Z$ is integrable with respect to $X$ with integral
		\begin{align*}
			\int_M Z_t \dd{X_t} 
			= 
			\of{\sqrt{a}\int_M z_t\of{a,B_{\bullet}} \dd {B^H_t}}_{a = A}
	\end{align*}
	if the right hand side is in $L^2_A$.
\end{theorem}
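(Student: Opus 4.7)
The plan is to verify that the random variable
\begin{align*}
	\Theta \coloneqq \of{\sqrt{a}\int_M z_t\of{a, B_\bullet} \dd{B^H_t}}_{a = A} \comma
\end{align*}
which lies in $L^2_A$ by hypothesis, satisfies the defining identity of Definition~\ref{definition:fracInt}. By injectivity of the $S_A$-transform (Theorem~\ref{result:SAInj}), it suffices to check the equality $S_A\bra{\Theta}\of{\eta} = \int_M S_A\bra{A Z_t}\of{\eta}\of{M_+^{H}\eta}\of{t}\dd{t}$ for every $\eta \in D_A$. The central tool is Theorem~\ref{result:SAtoS}, which translates both $S_A$-transforms into expectations of ordinary $S$-transforms of $\sigma\of{B}$-measurable functionals; these are then handled by the defining property of the unscaled fractional It\^o integral (the case $A \equiv 1$ of Definition~\ref{definition:fracInt}, as constructed in~\cite{B03b}).

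For the left-hand side I write $\Theta = \phi\of{A, B_\bullet}$ with $\phi\of{a , \cdot} \coloneqq \sqrt{a}\int_M z_t\of{a , \cdot}\dd{B^H_t}$ and apply Theorem~\ref{result:SAtoS}. The unscaled defining property yields, for $\Pb^A$-almost every $a > 0$,
\begin{align*}
	S\bra{\int_M z_t\of{a,B_\bullet}\dd{B^H_t}}\of{\sqrt{a}\eta}
	=
	\int_M S\bra{z_t\of{a,B_\bullet}}\of{\sqrt{a}\eta}
	\productspacing
	\of{M_+^{H}\of{\sqrt{a}\eta}}\of{t}\dd{t} \comma
\end{align*}
and linearity of $M_+^{H}$ turns $M_+^{H}\of{\sqrt{a}\eta}$ into $\sqrt{a}\productspacing M_+^{H}\eta$. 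For the right-hand side, applying Theorem~\ref{result:SAtoS} to $A Z_t = \of{a z_t\of{a,B_\bullet}}_{a=A}$ together with linearity of $S$ in its argument gives
\begin{align*}
	S_A\bra{A Z_t}\of{\eta}\of{M_+^{H}\eta}\of{t}
	=
	\E\bra{\of{a \productspacing S\bra{z_t\of{a,B_\bullet}}\of{\sqrt{a}\eta}}_{a = A} \of{M_+^{H}\eta}\of{t} \varepsilon_A^\eta\of{A}} \period
\end{align*}
Integrating in $t$ and swapping the $t$-integral with the expectation by Fubini yields exactly the expression obtained from the left-hand computation, and injectivity of $S_A$ identifies $\Theta$ with $\int_M Z_t \dd{X_t}$.

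The only delicate step is the interchange of expectation and $t$-integral. The scaling factor $\varepsilon_A^\eta\of{a} = \e{a\norm{\eta}_{2}^2/2}\productspacing\laplace\bra{A}\of{-\norm{\eta}_{2}^2/2}^{-1}$ has finite moments under $\Pb^A$ thanks to the holomorphicity of $\laplace\bra{A}$ on $B_{R_A}\of{0}$ and the requirement $2\norm{\eta}_{2}^2 < R_A$ for $\eta \in D_A$. Together with the hypothesis $\Theta \in L^2_A$ and the $\Pb^A$-a.s.\ existence of the unscaled fractional integral, this allows one to dominate the iterated integral of the absolute value of the integrand and apply Tonelli's theorem; Fubini for the signed integrand then follows, and the same calculation simultaneously produces the $L^1\of{M}$-integrability required by Definition~\ref{definition:fracInt}.
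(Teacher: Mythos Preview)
Your approach is essentially identical to the paper's: both compute the $S_A$-transform of the candidate $\Theta$ via Theorem~\ref{result:SAtoS}, invoke the defining identity of the unscaled fractional It\^o integral for $\Pb^A$-almost every $a$, pull the factor $\sqrt{a}$ through $M_+^H$, and then swap the $t$-integral with the expectation to arrive at $\int_M S_A\bra{AZ_t}\of{\eta}\of{M_+^H\eta}\of{t}\dd{t}$.

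One remark on the Fubini step: the justification you sketch does not actually produce a dominating function. Knowing $\Theta \in L^2_A$ bounds $\E\bra{\Theta^2}$, not the iterated integral of $\bigl|a\,S\bra{z_t\of{a,B_\bullet}}\of{\sqrt{a}\eta}\,\of{M_+^H\eta}\of{t}\,\varepsilon_A^\eta\of{a}\bigr|$ over $M\times\of{0,\infty}$; the latter would require control on $\norm{z_t\of{a,B_\bullet}}_{L^2}$ jointly in $t$ and $a$, which the stated hypotheses do not supply. That said, the paper's own proof simply asserts the interchange without any justification at all, so your write-up is at least as complete as the original on this point.
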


\begin{proof}
	As the right hand side is assumed to be in $L^2_A$ it is possible to calculate its $S_A$ transform using the definition of the  integral with respect to $B^H$ together with Theorem~\ref{result:SAtoS}.
	Indeed for all $\eta \in D_A$ it is
		\begin{align*}
			S_A\Big[ \Big( \sqrt{a} &\int_M z_t\of{a,B_{\bullet}} \dd{B^H_t} \Big)_{a = A} \Big] \of{\eta}
			\\ &= 
			\E\bra{\of{S\bra{\sqrt{a}\int_M z_t\of{a,B_{\bullet}} \dd{B^H_t}}\of{\sqrt{a}\eta}}_{a = A}
			\productspacing
			\varepsilon^\eta_A\of{A} }
			\\ &= 
			\E\bra{\of{\int_M S\bra{\sqrt{a} z_t\of{a,B_{\bullet}}}\of{\sqrt{a}\eta}
			\productspacing
			\sqrt{a}\of{M^H_+\eta}\of{t} \dd{t}}_{a = A}
			\productspacing
			\varepsilon^\eta_A\of{A}} 
			\\ &= 
			\E\bra{\int_M \of{ \vphantom{\Big\vert} S\bra{a z_t\of{a,B_{\bullet}}}\of{\sqrt{a}\eta}}_{a = A}
			\productspacing
			\of{M^H_+\eta}\of{t} \dd{t}
			\productspacing
			\varepsilon^\eta_A\of{A}} 
			\period
		\end{align*}
		Then, by interchanging the expectation and the integral with respect to $t$ in the last expression above, we obtain
			\begin{align*}
							S_A\Big[ \Big( \sqrt{a} &\int_M z_t\of{a,B_{\bullet}} \dd{B^H_t} \Big)_{a = A} \Big] \of{\eta}
			\\ &= 
			\int_M \E\bra{\of{ \vphantom{\Big\vert} S\bra{a z_t\of{a,B_{\bullet}}}\of{\sqrt{a}\eta}}_{a = A} 
			\productspacing
			\varepsilon^\eta_A\of{A}} \of{M^H_+\eta}\of{t} \dd{t} 
			\\ &= 
			\int_M   S_A\bra{A Z_t}\of{\eta} \of{M^H_+\eta}\of{t} \dd{t} 
			\period
			\end{align*}
	This immediately shows the statement by the definition of the randomly scaled fractional integral.
\end{proof}

This relation between the scaled and unscaled integrals motivates the following alternative  definition.

\begin{definition}[Fractional Integral (\textit{2\textsuperscript{nd} Approach})]
	Let $M \subset \R$ be a Borel set and let $Z : M \to L^2_A$ be given by $Z_t = z_t\of{A,B_{\bullet}}$ such that the processes $\left(z_t\of{a,B_{\bullet}}\right)_{t\in M}$ are fractionally integrable with respect to $B^H$ for $\Pb^A$-almost all $a > 0$. 
	We then call $Z$ fractionally integrable and define the integral as
		\begin{align*}
			\int_M Z_t \dd{X_t} 
			=
			\of{\sqrt{a}\int_M z_t\of{a,B_{\bullet}} \dd{B^H_t}}_{a = A} 
			\comma
		\end{align*}
	whenever the right hand side is in $L^2_A$.
\end{definition}

Theorem~\ref{result:integralXtoBH} tells us that the integral of the first approach is an extension of the integral of the second approach. 
The following corollary to the theorem presents a particularly simple case.

\begin{corollary} \label{result:Cor_integralXtoBH}
	Let $M \subset \R$ be a Borel set and let $Z : M \to L^2\of{\Omega , \sigma\of{B} , \Pb}$ be a process which is fractionally integrable with respect to $B^H$. 
	Then $Z$ is integrable with respect to $X$ in the sense of both approaches and we have 
		\begin{align*}
			\int_M Z_t \dd{X_t} 
			= 
			\sqrt{A} \int_M Z_t \dd{B^H_t} 
			\period
		\end{align*}
\end{corollary}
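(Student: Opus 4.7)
The plan is to apply Theorem~\ref{result:integralXtoBH} to the trivial representation in which the function $z_t$ does not depend on the first variable. More precisely, since $Z_t$ is $\sigma\of{B}$-measurable, by the factorisation lemma one can pick a Borel function $\tilde z_t : \CC\of{\R} \to \overline{\R}$ with $Z_t = \tilde z_t\of{B_{\bullet}}$, and then define $z_t : \R \times \CC\of{\R} \to \overline{\R}$ by $z_t\of{a, \omega} \coloneqq \tilde z_t\of{\omega}$. For this choice, the process $\of{z_t\of{a, B_{\bullet}}}_{t \in M}$ equals the fixed process $Z$ for every $a > 0$, and is therefore fractionally integrable with respect to $B^H$ by assumption.

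The only substantive point to verify is that the candidate integral $\sqrt{A} \int_M Z_t \dd{B^H_t}$ lies in $L^2_A$. By independence of $A$ from $\sigma\of{B}$, I would write
	\begin{align*}
		\E\bra{A \of{\int_M Z_t \dd{B^H_t}}^2}
		=
		\E\bra{A} \productspacing \norm{\int_M Z_t \dd{B^H_t}}^2_{L^2\of{\Omega, \sigma\of{B}, \Pb}} \period
	\end{align*}
The second factor is finite because the unscaled fractional integral takes values in $L^2$. For the first factor, I would invoke the standing assumption that $\laplace\bra{A}$ extends holomorphically to $B_{R_A}\of{0}$: by Raikov's theorem, $\E\bra{\e{\epsilon A}} < \infty$ for any $\epsilon \in \of{0, R_A}$, which in turn forces every positive moment of $A$ to be finite, in particular $\E\bra{A} < \infty$.

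Once $L^2_A$-membership is in hand, Theorem~\ref{result:integralXtoBH} applies directly and gives
	\begin{align*}
		\int_M Z_t \dd{X_t}
		=
		\of{\sqrt{a} \int_M z_t\of{a, B_{\bullet}} \dd{B^H_t}}_{a = A}
		=
		\sqrt{A} \int_M Z_t \dd{B^H_t}
		\comma
	\end{align*}
establishing integrability and the identity in the sense of the first approach. Integrability in the sense of the second approach is immediate, since its defining hypothesis is precisely that $\of{z_t\of{a, B_{\bullet}}}_{t \in M}$ be fractionally integrable with respect to $B^H$ for $\Pb^A$-almost all $a$, and the resulting formula for the integral is identical.

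The only place requiring any real thought is the $L^2_A$-integrability of the right hand side; once one recognizes that the holomorphy of $\laplace\bra{A}$ near $0$ yields exponential moments of $A$ via Raikov's theorem, the rest is a direct specialisation of Theorem~\ref{result:integralXtoBH}.
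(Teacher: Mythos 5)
Your proposal is correct and follows exactly the route the paper intends: the corollary is stated as an immediate specialisation of Theorem~\ref{result:integralXtoBH} to a representing function $z_t\of{a,\cdot}$ that is constant in $a$, and your verification that the right-hand side lies in $L^2_A$ (via the exponential moments of $A$ guaranteed by the holomorphy of $\laplace\bra{A}$ on $B_{R_A}\of{0}$ and the Raikov theorem quoted in the preliminaries) supplies precisely the detail the paper leaves implicit. No gaps.
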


\section{Integral Processes} \label{section:integral_processes}
We remain in the setting of the previous section \ref{section:frac_integral}. 
In particular, $X = \sqrt{A}B^H$ still denotes the randomly scaled fractional Brownian motion with Hurst parameter $H \in \of{0, 1}$ and scaling $\sqrt{A}$.
\par
By integrating a given process $Z : \left[0, T\right) \to L^2_A$ with respect to $X$ over the sets $[0,t)$ with $t \leq T$ one may obtain a new process,
also indexed by the set $\left[0 , T \right)$,
provided the fractional integrals exist for all times $t$. 
In this section some regularity properties of certain process of this type are investigated.
To that end we first fix some notation.
\par
For $0 \leq T \leq \infty$ let $\overline{[0,T)}$ denote $[0,T]$ if $0\leq T<\infty$ and $[0,\infty)$ if $T=\infty$. 
In any case, consider the set
	\begin{align*}
		\mathfrak{DI}_T\of{X} 
		\coloneqq 
		\set{\overline{[0,T)} \to L^2_A \comma \mediumspace  t \mapsto \int_0^t \nu\of{s} \dd{X_s}}{
	\begin{matrix}
		&\nu \in \CC[0,\infty) \textnormal{ for } H \geq \frac{1}{2} \\
		&\nu \in \R \textnormal{ for } H < \frac{1}{2} 
	\end{matrix}}
	\end{align*}
of $L^2_A$-valued processes on $\overline{[0,T)}$ resulting from integrals of ``good'' deterministic functions with respect to $X$. 
The processes in $\mathfrak{DI}_T\of{X}$ exist by Corollary~\ref{result:Cor_integralXtoBH} above, Theorem~3.4 and Proposition~5.1 in~\cite{B03b}. 
Additionally, they show some regularity.
More precisely, we have the following two lemmas.

\begin{lemma} \label{result:intCont}
	Let $0 \leq T \leq \infty$. 
	Then every process in $\mathfrak{DI}_T\of{X}$ is $L^2_A$ continuous. 
\end{lemma}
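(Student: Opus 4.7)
The plan is to reduce $L^2_A$-continuity of the scaled process to the already known $L^2$-continuity of the analogous unscaled process on $B^H$, exploiting the factorisation provided by independence of $A$ and $B$. First, since each integrand $\nu$ is deterministic (continuous on $[0,\infty)$ when $H\geq 1/2$, constant when $H<1/2$), it is trivially a member of $L^2(\Omega,\sigma(B),\Pb)$. Corollary~\ref{result:Cor_integralXtoBH} therefore applies and yields, for every $t\in\overline{[0,T)}$,
\begin{align*}
	\int_0^t \nu(s) \dd{X_s} = \sqrt{A}\int_0^t \nu(s) \dd{B^H_s}\period
\end{align*}

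Fix $t_0\in\overline{[0,T)}$ and let $t\in\overline{[0,T)}$. By linearity of the fractional It\^o integral (property (i) of the preceding theorem, combined with property (ii) to rewrite the difference as an integral over $[t_0,t)$ or $[t,t_0)$), the increment satisfies
\begin{align*}
	\int_0^t \nu(s) \dd{X_s} - \int_0^{t_0} \nu(s) \dd{X_s}
	= \sqrt{A}\of{\int_0^t \nu(s) \dd{B^H_s} - \int_0^{t_0} \nu(s) \dd{B^H_s}}\period
\end{align*}
Because the right factor is $\sigma(B)$-measurable and independent of $A$, taking the $L^2_A$-norm factorises:
\begin{align*}
	\norm{\int_0^t \nu(s) \dd{X_s} - \int_0^{t_0} \nu(s) \dd{X_s}}_{L^2_A}^2
	= \E\bra{A} \cdot \E\bra{\of{\int_0^t \nu(s) \dd{B^H_s} - \int_0^{t_0} \nu(s) \dd{B^H_s}}^2}\period
\end{align*}

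It remains to observe two things. The first factor is finite: holomorphy of $\laplace[A]$ on $B_{R_A}(0)$ guarantees via the Raikov theorem (recorded at the end of the Preliminaries) that $A$ has a finite exponential moment near the origin, so in particular $\E[A]<\infty$. For the second factor, the integrands $\nu\1_{[0,t)}$ are deterministic, so the cited construction from~\cite{B03b} realises $\int_0^t \nu(s)\dd{B^H_s}$ as the Wiener integral $I(M_-^H(\nu\1_{[0,t)}))$ of a Gaussian type; hence the second factor equals $\norm{M_-^H(\nu\1_{[t_0,t)})}_2^2$ (using Lemma~\ref{result:MNorm} in the constant case, and a straightforward dominated-convergence argument using continuity of $\nu$ and the integral kernel defining $M_-^H$ in the continuous case), which tends to $0$ as $t\to t_0$. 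This is exactly the content of Theorem~3.4 and Proposition~5.1 of~\cite{B03b} already invoked for existence.

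The only genuinely non-routine point is confirming $\E[A]<\infty$; the rest is bookkeeping built on Corollary~\ref{result:Cor_integralXtoBH} and the Bender references. Combining the two estimates gives $\norm{I_t-I_{t_0}}_{L^2_A}\to 0$ as $t\to t_0$, which is the desired $L^2_A$-continuity.
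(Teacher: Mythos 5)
Your argument is correct and follows essentially the same route as the paper: both reduce to the unscaled integral via $Z_t = \sqrt{A}\,Y_t$ (Corollary~\ref{result:Cor_integralXtoBH}), factor the $L^2_A$-norm of the increment into $\E\bra{A}$ times the Gaussian variance $\norm{M_-^H\of{\nu\1_{\of{t_0,t}}}}_{2}^2$ using independence and Proposition~5.1 of~\cite{B03b}, and send this to zero via Lemma~\ref{result:MNorm}. Your explicit remark that $\E\bra{A}<\infty$ follows from holomorphy of $\laplace\bra{A}$ near the origin is a small point the paper leaves implicit, but otherwise the proofs coincide.
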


\begin{proof}
	Let $Z \in \mathfrak{DI}_T\of{X}$ result from integrating $\nu$.	
	In the case of $H < 1/2$, meaning a constant $\nu \in \R$, it follows from Corollary~\ref{result:Cor_integralXtoBH} above and Example~3.1 in~\cite{B03b} that 	$Z_t = \nu X_t$. 
	The $L^2_A$ continuity then follows from the continuous covariance structure of $X$. 
	Now consider the case $H \geq 1 / 2$. 
	Again by Corollary~\ref{result:Cor_integralXtoBH} we can write $Z_t = \sqrt{A}Y_t$, where
		\begin{align}\label{eq:Y}
				Y_t 
				= 
				\int_0^t \nu\of{s} \dd{B^H_s} 
				\period
		\end{align}
	As the $L^2_A$ distance between $Z_t$ and $Z_s$ is given by 
		\begin{align*}
			\norm{Z_t - Z_s}_{L^2_A}^2 
			= 
			\E\bra{A} 
			\productspacing
			\norm{Y_t - Y_s}_{L^2_A}^2
		\end{align*}
	for all $t,s \in \overline{[0,T)}$, 
	it suffices to show that the process $Y$ is $L_A^2$ continuous. 
	To that end, consider $t \in \overline{[0,T)}$ and let $\of{t_n}_{n \in \N}$ be a sequence in $\overline{[0,T)}$ with $t_n \to t$. 
	According to Proposition~5.1 in~\cite{B03b}, 
	the difference $Y_t - Y_{t_n}$ is normal distributed with variance 
		\begin{align*}
			\norm{Y_t - Y_{t_n}}_{L^2_A}^2 
			= 
			\norm{M_-^H\of{\nu \1_{\of{t_n,t}}}}_{2}^2 
			\period
		\end{align*}
	Thus, we have by Lemma~\ref{result:MNorm}
		\begin{align*}
			\norm{Y_t - Y_{t_n}}_{L^2_A}^2
			&= 
			\norm{M_-^H\of{\nu\1_{\of{t_n,t}}}}_{2}^2
			\\ &\leq
			\of{\max_{s \in \bra{0,T}} \abs{\nu\of{s}}}^2 \norm{M_-^H \1_{\of{t_n,t}}}_{2}^2 \too{n \to \infty} 0 
		 \period
	\end{align*}
\end{proof}

\begin{lemma} \label{result:regularityVar}
	Let $0 \leq T \leq \infty$ and let $Z \in \mathfrak{DI}_T\of{X}$ result from integrating $\nu$. 
	Then the mapping
		\begin{align*}
			\of{0,T} \to \R \comma \largespace 
			t \mapsto \mathbb{E}\bra{Z^2_t} 
			= 
			\E\bra{A} 
			\productspacing
			\norm{M_-^H\of{\nu\1_{\of{0,t}}}}_{2}^2  
		\end{align*}
	is continuously differentiable on $\of{0,T}$ with the bound 
		\begin{align*}
			\abs{ \delimiterspacing \diff{t} \norm{M_-^H\of{\nu\1_{\of{0,t}}}}_{2}^2 \delimiterspacing } 
			\leq 
			2H \of{\max_{s \in \bra{0,T}}{\abs{\nu\of{s}}^2}} t^{2H - 1}
		\end{align*}
	for all $t \in \of{0,T}$.
\end{lemma}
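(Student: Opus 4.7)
Following the proof of Lemma~\ref{result:intCont}, one has $Z_t = \sqrt{A}\productspacing Y_t$ with $Y_t = \int_0^t \nu\of{s} \dd{B^H_s}$. Combining the independence of $A$ and $B^H$ with Proposition~5.1 of~\cite{B03b} (already invoked in Lemma~\ref{result:intCont}) yields
\[
\E\bra{Z_t^2} = \E\bra{A}\productspacing \E\bra{Y_t^2} = \E\bra{A}\productspacing \norm{M_-^H\of{\nu\smallspace\1_{\of{0,t}}}}_2^2 \period
\]
Hence the claim reduces to establishing continuous differentiability of the purely deterministic function $g\of{t} \coloneqq \norm{M_-^H\of{\nu\smallspace\1_{\of{0,t}}}}_2^2$ on $\of{0,T}$ and the estimate $\abs{g'\of{t}} \leq 2H\of{\max_{s \in [0,T]} \abs{\nu\of{s}}^2}\smallspace t^{2H-1}$.

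The two boundary regimes follow immediately from Lemma~\ref{result:MNorm}. For $H < 1/2$ the integrand $\nu$ is a real constant, so $g\of{t} = \nu^2 t^{2H}$, $g'\of{t} = 2H \nu^2 t^{2H-1}$, and the bound holds with equality. For $H = 1/2$ the operator $M_-^{1/2}$ is the identity, so $g\of{t} = \int_0^t \nu\of{s}^2 \dd{s}$ and $g'\of{t} = \nu\of{t}^2$, which is continuous on $\of{0,T}$ and bounded by $\max\abs{\nu}^2 = 2H \max\abs{\nu}^2 t^{2H-1}$ when $H = 1/2$.

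The substantive case is $H > 1/2$ with $\nu$ continuous. The plan is to exploit the Riesz-potential representation
\[
g\of{t} = H\of{2H-1} \int_0^t \!\int_0^t \nu\of{u} \nu\of{v} \abs{u-v}^{2H-2} \dd{u} \dd{v} \comma
\]
differentiate the symmetric double integral to obtain
\[
g'\of{t} = 2H\of{2H-1}\productspacing \nu\of{t} \int_0^t \nu\of{v} \of{t-v}^{2H-2} \dd{v} \comma
\]
observe that this expression is continuous on $\of{0,T}$ by continuity of $\nu$ and dominated convergence (the kernel $\of{t-v}^{2H-2}$ is integrable since $2H - 2 > -1$), and finally bound it using $\abs{\nu\of{t}\nu\of{v}} \leq \max\abs{\nu}^2$ together with $\int_0^t \of{t-v}^{2H-2} \dd{v} = t^{2H-1}/\of{2H-1}$ to obtain exactly $\abs{g'\of{t}} \leq 2H \max\abs{\nu}^2\smallspace t^{2H-1}$.

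The main obstacle will be justifying the Riesz-potential representation for general continuous $\nu$, as Lemma~\ref{result:MNorm} only supplies it on single indicator functions. I would handle this by first polarizing Lemma~\ref{result:MNorm} to an inner-product identity on simple linear combinations $\sum_i c_i \1_{\of{s_i, r_i}}$ (both sides reduce to finite double sums of $\abs{u-v}^{2H-2}$ integrals, which agree via the Lemma and a direct evaluation of the double integral), and then extending to continuous $\nu$ by $L^2$-density and continuity of both sides in the integrand (the right-hand side being a bounded bilinear form, as follows from the $L^2$-boundedness of $M_-^H$ on the appropriate fractional Sobolev scale, or equivalently from the Hardy--Littlewood--Sobolev inequality applied to the Riesz kernel $\abs{u-v}^{2H-2}$).
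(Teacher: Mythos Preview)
Your proof is correct and follows the same case split and the same derivative formula as the paper. The only difference is a matter of citation depth: for $H>1/2$ the paper does not rederive the derivative of $g$ but simply quotes Lemma~5.2 of~\cite{B03b}, which already states
\[
\diff{t}\norm{M_-^H\of{\nu\1_{\of{0,t}}}}_{2}^2 = 2H\of{2H-1}\,\nu\of{t}\int_0^t \nu\of{s}\of{t-s}^{2H-2}\dd{s}\period
\]
From there the bound and the continuity are obtained exactly as you do. Your ``main obstacle'' paragraph is therefore unnecessary work: the Riesz-potential identity $\langle M_-^H f, M_-^H g\rangle = H\of{2H-1}\int\!\!\int f\of{u}g\of{v}\abs{u-v}^{2H-2}\dd{u}\dd{v}$ is standard for $H>1/2$ and is used elsewhere in the paper (Lemma~\ref{result:timeChange}) with a direct reference to Proposition~2.2 of~\cite{GN96}; you need not rebuild it from Lemma~\ref{result:MNorm} via polarization and density.
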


\begin{proof}
	Recall that $\mathbb{E}\bra{Z^2_t}$ is indeed given by 
		\begin{align*}
			\mathbb{E}\bra{Z^2_t} 
			=
			\E\bra{A} 
			\productspacing
			\norm{M_-^H\of{\nu\1_{\of{0,t}}}}_{L^2\of{\R}}^2
		\end{align*}			
	according to Proposition~5.1 in~\cite{B03b}.
	In the case of $H = 1/2$, the statement is trivial due to $M^{1/2}_-$ being the identity operator.
	For $H < 1/2$ the statement easily follows from Lemma~\ref{result:MNorm} because then $\nu$ is constant. 
	Consequently, we now consider $H > 1/2$. 
	It has already been discussed in Lemma~5.2 in~\cite{B03b} that, in this case, $\norm{M_-^H\of{\nu\1_{\of{0,t}}}}_{2}^2$ is differentiable on $\of{0,T}$ with derivative 
		\begin{align}\label{eq:new}
			\diff{t}\norm{M_-^H\of{\nu\1_{\of{0,t}}}}_{2}^2 
			= 
			2H\of{2H-1}\int_0^t \nu\of{s}\nu\of{t}\of{t - s}^{2H-2} \dd{s} \period
		\end{align}
	The desired bound on the derivative follows easily from this expression. 
	Thus, it only remains to show that the derivative is continuous. 
	For that, it suffices to note that the mapping 
		\begin{align*}
			\of{0,T} \to \R \comma \largespace 
			t \mapsto \int_0^t \nu\of{s}\of{t - s}^{2H-2} \dd{s}	
		\end{align*}
	is continuous. 
\end{proof}

To extend the above results to functionals of processes in $\mathfrak{DI}_T\of{X}$,
we first specify which functionals we want to consider. 
For the following definition recall that the radius $R_A$ was defined to be the radius of the maximal open ball $B_{R_A}\of{0} \subset \C$ on which the Laplace transform of $A$ is holomorphic.

\begin{definition} \label{definition:admFunc}
	Let $T > 0$ and let $Z \in \mathfrak{DI}_T\of{X}$ result from integrating $\nu$.
	We call a function $F\of{t,z,a} \in \CC^{1,2,0}\of{\bra{0,T},\R, \of{0,\infty}}$ an \emphasis{admissible functional} of $Z$, if $F$ satisfies the growth bounds
		\begin{align}\label{eq:admFunc}
			\abs{F\of{t,\sqrt{a}z,a}} 
			\leq 
			C \e{ca + \lambda z^2} 
		\end{align}
	for some constants $C$, $c$, $\lambda \geq 0$ with $c < R_A / 2$ and $\lambda < \of{2T^H \max_{s \in \bra{0,T}}{\abs{\nu\of{s}}}}^{-2}$.
\end{definition}

We will show the $L^2_A$ continuity of the process $F\of{t,Z_t,A}$ for all admissible functionals $F$ of $Z$.
The first step towards this is to give an estimate of the $L_A^2$ norm of $F\of{s,Z_t,A}$.
	
\begin{lemma}	\label{result:GBound}
	Let $T > 0$ and let $Z \in \mathfrak{DI}_T\of{X}$ result from integrating $\nu$.
	Then for any admissible functional $F$ and $s,t \in \bra{0,T}$ we have $F\of{s,Z_t,A} \in L^2_A$. 
	Furthermore, the $L^2_A$ norm can be bounded independent of $s$ and $t$ by
		\begin{align*}
			\norm{F\of{s,Z_t,A}}_{L^2_A}^2 
			\leq 
			\frac{C^2 \laplace\bra{A}\of{-2c}}{\sqrt{1 - \of{2T^{H} \max_{s \in \bra{0,T}}{\abs{\nu\of{s}}}}^2\lambda}}
		\end{align*}
	for any growth constants $C$, $c$ and $\lambda$ of $F$ satisfying Definition~\ref{definition:admFunc}.
\end{lemma}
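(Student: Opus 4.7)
The plan is to reduce everything to a standard Gaussian moment computation combined with the Laplace transform of $A$. Recall from Corollary~\ref{result:Cor_integralXtoBH} that $Z_t=\sqrt{A}\,Y_t$, where $Y_t=\int_0^t\nu(s)\dd{B^H_s}$ is independent of $A$ and, by Proposition~5.1 of~\cite{B03b}, centered Gaussian with variance $\sigma_t^2\coloneqq\norm{M_-^H(\nu\1_{(0,t)})}_2^2$. Writing $F(s,Z_t,A)=F(s,\sqrt{A}\,Y_t,A)$ and invoking the admissibility bound~\eqref{eq:admFunc} with $z=Y_t$ gives
\begin{align*}
\norm{F(s,Z_t,A)}_{L^2_A}^2 \le C^2\,\E\bra{\e{2cA+2\lambda Y_t^2}} = C^2\,\E\bra{\e{2cA}}\,\E\bra{\e{2\lambda Y_t^2}},
\end{align*}
where in the last step I use the independence of $A$ and $Y_t$.

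The first factor equals $\laplace[A](-2c)$ and is finite because $2c<R_A$, using Raikov's theorem as recalled in the preliminaries. For the second factor I use the classical identity $\E[\e{\alpha Y^2}]=(1-2\alpha\sigma^2)^{-1/2}$ for $Y\sim\mathcal N(0,\sigma^2)$ and $2\alpha\sigma^2<1$, so that
\begin{align*}
\E\bra{\e{2\lambda Y_t^2}} = (1-4\lambda\sigma_t^2)^{-1/2},
\end{align*}
provided $4\lambda\sigma_t^2<1$.

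The key remaining step is to bound $\sigma_t^2$ uniformly by $(\max_{s\in[0,T]}|\nu(s)|)^2 T^{2H}$. For $H=\tfrac12$ this is immediate since $M_-^{1/2}$ is the identity. For $H<\tfrac12$, the function $\nu$ is a constant and Lemma~\ref{result:MNorm} gives $\sigma_t^2=\nu^2 t^{2H}\le\nu^2 T^{2H}$. For $H>\tfrac12$, I use the integral representation of $M_-^H$: for every $x\in\R$,
\begin{align*}
\bigl|\of{M_-^H(\nu\1_{(0,t)})}(x)\bigr| \le \Bigl(\max_{s\in[0,T]}|\nu(s)|\Bigr)\,\of{M_-^H\1_{(0,t)}}(x),
\end{align*}
since the kernel $(s-x)^{H-3/2}$ is nonnegative on $\{s>x\}$ and $M_-^H\1_{(0,t)}\ge 0$ pointwise (as one verifies from the explicit formula in Lemma~\ref{result:MNorm} using $H-1/2>0$). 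Squaring, integrating, and applying Lemma~\ref{result:MNorm} once more yields $\sigma_t^2\le(\max|\nu|)^2 t^{2H}\le(\max|\nu|)^2 T^{2H}$. Then $4\lambda\sigma_t^2\le(2T^H\max|\nu|)^2\lambda<1$ by the admissibility of $\lambda$, and monotonicity of $x\mapsto(1-x)^{-1/2}$ gives the stated bound.

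The step I expect to require the most care is the pointwise domination of $M_-^H(\nu\1_{(0,t)})$ by $(\max|\nu|)\,M_-^H\1_{(0,t)}$ in the regime $H>\tfrac12$; the other ingredients (independence, Gaussian moments, Raikov) are standard once the decomposition $Z_t=\sqrt{A}Y_t$ is in place.
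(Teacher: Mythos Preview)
Your proof is correct and follows essentially the same approach as the paper: decompose $Z_t=\sqrt{A}\,Y_t$, apply the growth bound and independence to split the expectation, evaluate the Gaussian factor via $\E[\e{2\lambda Y_t^2}]=(1-4\lambda\sigma_t^2)^{-1/2}$, and then bound $\sigma_t^2$ by $(\max|\nu|)^2 T^{2H}$ using Lemma~\ref{result:MNorm}. In fact you give more detail than the paper on the last step---the paper simply asserts $\norm{M_-^H(\nu\1_{(0,t)})}_2^2\le(\max|\nu|)^2\norm{M_-^H\1_{(0,t)}}_2^2$ without splitting into the cases $H\lessgtr 1/2$ or justifying the pointwise domination---so your treatment is a slight refinement of the same argument.
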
		
		
\begin{proof}
	Naturally, $F\of{s,Z_t,A}$ is $\sigma\of{A,B}$-measurable for all $s,t \in \bra{0,T}$. 	
	Again we write $Z_t$ as $\sqrt{A}Y_t$, where $Y_t$ is given by~\eqref{eq:Y}.
	For any growth constants $C$, $c$ and $\lambda$ of $F$ satisfying Definition~\ref{definition:admFunc} we have
		\begin{align*}
			\norm{F\of{s,Z_t,A}}_{L^2_A}^2 
			&= 
			\E\bra{F\of{s,\sqrt{A}Y_t,A}^2} 
			\leq
			\E\bra{C^2\e{2\of{cA + \lambda Y_t^2}}} 
			\\ &=
			C^2 
			\productspacing			
			\E\bra{\e{2cA}}
			\productspacing 
			\E\bra{\e{2\lambda Y_t^2}} 
			=
			C^2 \laplace\bra{A}\of{-2c}
			\productspacing
			\E\bra{\e{2\lambda Y_t^2}}
			\period
		\end{align*}
	As $Y_t$ has a normal distribution with $\mathbb{E}\bra{Y^2_t} = \norm{M^H_-\of{\nu\1_{\of{0,t}}}}_{2}^2$ it is
		\begin{align*}
			\E\bra{\e{2\lambda Y_t^2}}  
			= 
			\of{1 - 4\lambda\mathbb{E}\bra{Y^2_t}}^{-1 / 2} 
			\leq
			\bigg( 1 - \of{2T^{H} \max_{s \in \bra{0,T}} \abs{\nu\of{s}}}^2\lambda \bigg)^{-\frac{1}{2}} 
			< 
			\infty 
			\period
	\end{align*}
	In the last estimate we used the assumed bound on $\lambda$ as well as Lemma~\ref{result:MNorm} via
		\begin{align*}
			\norm{M_-^H\of{\nu\1_{\of{0,t}}}}_{2}^2 
			\leq
			\norm{M_-^H\1_{\of{0,t}}}_{2}^2 \of{\max_{s \in \bra{0,T}} \abs{\nu\of{s}}}^2 \leq 
			T^{2H}\of{\max_{s \in \bra{0,T}} \abs{\nu\of{s}}}^2 
			\period
	\end{align*}
	This shows the desired estimate.
\end{proof}	
	
\begin{remark} \label{result:SofGBound}
	From the previous proof one can see in particular, 
	that for every $a \in \of{0,\infty}$ the $L^2\of{\Omega , \sigma\of{B} ,\Pb}$ norm of $F\of{t,\sqrt{a}Y_t,a}$ is bounded by 
		\begin{align*}
			\norm{F\of{t,\sqrt{a}Y_t,a}}_{L^2\of{\Omega, \sigma\of{B},\Pb}}^2 
			\leq
			\frac{C^2 \e{2ca}}{\sqrt{1 - \of{2T^{H} \max_{s \in \bra{0,T}}{\abs{\nu\of{s}}}}^2\lambda}} 
			\period
		\end{align*}
	This bound immediately implies 
		\begin{align*}
			\abs{S\bra{F\of{t,\sqrt{a}Y_t,a}}\of{\sqrt{a}\eta}} &\leq
			\of{\frac{C^2 \e{2ca}}{\sqrt{1 - \of{2T^{H} \max_{s \in \bra{0,T}}{\abs{\nu\of{s}}}}^2\lambda}}}^{1 / 2} \norm{W_1\of{\sqrt{a}\eta}}_{L^2} 	
			\\ &= 
			\frac{C \e{a \of{c + \frac{1}{2}\norm{\eta}^2_{2}}}}{\sqrt[4]{1 - \of{2T^{H} \max_{s \in \bra{0,T}}{\abs{\nu\of{s}}}}^2\lambda}}
	\end{align*}
	for all $\eta \in  D_A$.
	Since we have $0 < c + \norm{\eta}^2_{L^2(\R)} < R_A$ for all $\eta \in D_A$, it is
		\begin{align*}
			S\bra{F\of{t,\sqrt{a}Y_t,a}}\of{\sqrt{a}\eta} 
			\productspacing
			\varepsilon^\eta_A\of{a} 
			\in 
			L^1\of{\of{0,\infty} , \Pb^A}, 
		\end{align*}
	with $\varepsilon^\eta_A\of{a}$ as defined in Theorem~\ref{result:SAtoS}. 
\end{remark}		
	
Using the $L^2_A$ (and hence the stochastic) continuity of $Z \in \mathfrak{DI}_T\of{X}$, 
we can now show the $L^2_A$ continuity of $F\of{t,Z_t,A}$ for all admissible functionals $F$ of $Z$.

\begin{lemma} \label{result:funcCont}
	Let $T > 0$ and let $Z \in \mathfrak{DI}_T\of{X}$. For every admissible functional $F$ the process
		\begin{align*}
			\bra{0,T} \to L^2_A \comma \largespace 
			t \mapsto F\of{t,Z_t,A}
		\end{align*}
	is $L^2_A$ continuous.
\end{lemma}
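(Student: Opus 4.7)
The plan is to show, for a fixed $t \in \bra{0,T}$ and an arbitrary sequence $\of{t_n}_{n \in \N}$ in $\bra{0,T}$ with $t_n \to t$, that $F\of{t_n, Z_{t_n}, A} \to F\of{t, Z_t, A}$ in $L^2_A$. My approach is a two-step \emph{convergence-in-probability plus uniform-integrability} argument combined through Vitali's convergence theorem.

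First, writing $Z_r = \sqrt{A}\, Y_r$ with $Y$ as in~\eqref{eq:Y}, I observe that Lemma~\ref{result:intCont} gives $Z_{t_n} \to Z_t$ in $L^2_A$, hence in probability. Since $F$ is jointly continuous in its first two arguments for every fixed value of the third, the continuous mapping theorem immediately yields $F\of{t_n, Z_{t_n}, A} \to F\of{t, Z_t, A}$ in probability.

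Second, I would exploit the strict inequalities in Definition~\ref{definition:admFunc} to obtain a uniform $L^{2+\delta}$ bound for some $\delta > 0$. Since $c < R_A/2$ and $\lambda < \of{2T^H \max_{s \in \bra{0,T}} \abs{\nu\of{s}}}^{-2}$, one may choose $\delta > 0$ small enough that $\of{2+\delta} c < R_A$ and $2\of{2+\delta}\lambda T^{2H} \of{\max_{s \in \bra{0,T}}\abs{\nu\of{s}}}^2 < 1$ still hold. The computation of Lemma~\ref{result:GBound}, repeated with exponent $2+\delta$ in place of $2$ and using the Laplace transform $\laplace\bra{A}\of{-\of{2+\delta}c}$ together with the moment-generating function of the Gaussian $Y_r$, then yields
\begin{align*}
    \sup_{s, r \in \bra{0,T}} \norm{F\of{s, Z_r, A}}_{L^{2+\delta}_A} < \infty \period
\end{align*}
By the de la Vall\'ee--Poussin criterion, this uniform bound implies that $\sset{F\of{t_n, Z_{t_n}, A}^2}_{n \in \N}$ is uniformly integrable. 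Vitali's theorem then upgrades the convergence in probability above to the desired $L^2_A$-convergence.

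The main obstacle, in my view, lies in recognizing that the strict inequalities in Definition~\ref{definition:admFunc} are engineered precisely to afford this $L^{2+\delta}$-trick. A direct argument based solely on the $L^2$ bound of Lemma~\ref{result:GBound} would yield only uniform boundedness of $\norm{F\of{t_n, Z_{t_n}, A} - F\of{t, Z_t, A}}_{L^2_A}$, not its vanishing; the slack provided by the constants $c$ and $\lambda$ is exactly what makes the uniform integrability step go through.
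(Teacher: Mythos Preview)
Your proposal is correct and follows essentially the same approach as the paper: convergence in probability via Lemma~\ref{result:intCont} and the continuous mapping theorem, followed by uniform integrability obtained from a uniform $L^{2+\delta}$ bound (exploiting the strict inequalities on $c$ and $\lambda$), and then Vitali's theorem. The only cosmetic differences are that the paper uses $\epsilon$ in place of your $\delta$ and does not name the de la Vall\'ee--Poussin criterion explicitly.
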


\begin{proof}
	Let $t \in \bra{0,T}$ and let $\of{t_n}_{n \in \N}$ be a convergent sequence in $\bra{0,T}$ with $t_n \too{n \to \infty} t$.
	We will show
		\begin{align*}
			F\of{t_n,Z_{t_n},A} \too{L^2_A} F\of{t,Z_t,A}
		\end{align*}
	using the convergence theorem of Vitali.
	From Lemma~\ref{result:intCont} we know that $Z$ is $L^2_A$ continuous and hence continuous in probability, i.e. $Z_{t_n} \too{\Pb} Z_t$. 
	Thus, the continuous mapping theorem gives
		\begin{align*}
			F\of{t_n,Z_{t_n},A} \too{\Pb} F\of{t,Z_t,A} \period
		\end{align*}
	It remains to show that the sequence $\of{F\of{t_n,Z_{t_n},A}^2}_{n \in \N}$ is uniformly integrable.
	For that it suffices to find an $\epsilon > 0$ such that
		\begin{align*}
			\sup_{n \in \N} \smallspace \E\bra{\abs{F\of{t_n,Z_{t_n},A}}^{2 + \epsilon}}
			 < 
			\infty
			\period
		\end{align*}
	To do that, we take $C$, $c$ and $\lambda$ to be growth constants of $F$, satisfying Definition~\ref{definition:admFunc}, 
	and choose $\epsilon > 0$ small enough such that
		\begin{align*}
			\of{1 + \frac{\epsilon}{2}}\lambda < \of{2T^H \max_{s \in \bra{0,T}}{\abs{\nu\of{s}}}}^{-2}
			\largespace \textnormal{and} \largespace \of{2 + \epsilon}c < R_A
 		\end{align*}
	holds. 
	With this choice of $\epsilon$ we have for all $n \in \N$, similarly to the proof of Lemma~\ref{result:GBound},
		\begin{align*}
			 \E\bra{\abs{F\of{t_n,Z_{t_n},A}}^{2 + \epsilon}} 
			 &\leq
		 	C^{2+\epsilon} 
		 	\productspacing
		 	\E\bra{\e{\of{2+\epsilon}cA}}
		 	\productspacing
		 	\E\bra{\e{\of{2+\epsilon}\lambda Y_{t_n}^2}}
		 	 \\ &\leq
		 	\frac{C^{2 + \epsilon} \laplace\bra{A}\of{-\of{2+\epsilon}c}}{\sqrt{1 - \of{1 + \frac{\epsilon}{2}} \of{2T^{H} \max_{s \in \bra{0,T}}{\abs{\nu\of{s}}}}^2\lambda}} 
		 	< 
		 	\infty 
		 	\period
		\end{align*}
\end{proof}	

\begin{remark} \label{result:GCont}
	From the proof of Lemma~\ref{result:funcCont} it is clear that the process
		\begin{align*}
			\bra{0,T} \to L^2\of{\Omega, \sigma(B), \Pb} \comma \largespace 
			t \mapsto F\of{t,\sqrt{a}Y_t,a}
		\end{align*}
	is $L^2\of{\Omega, \sigma(B), \Pb}$ continuous for all $a \in \of{0,\infty}$.
\end{remark}

\section{The It\^{o} Formula}
The It\^o~formula derived in~\cite{B03b} can be generalized to the randomly scaled case with a non obvious dependency on $A$ in the term containing the second derivative of the functional. 
The It\^o formula will be concerned with admissible functionals $F\of{t,Z_t,A}$ of processes $Z \in \mathfrak{DI}_T\of{X}$. To formulate and proof the formula we again remain in the setting of section \ref{section:frac_integral} and \ref{section:integral_processes}.

\begin{theorem}[\textbf{It\^o formula}] \label{result:itoFormula}
	Let $T > 0$ and let $Z \in \mathfrak{DI}_T\of{X}$ result from integrating the function $\nu$, that is
		\begin{align*}
			Z \smallspace : \smallspace
			\bra{0,T} \to L^2_A \comma \largespace 
			t \mapsto \int_0^t \nu\of{s} \dd{X_s} \period
		\end{align*}
	Furthermore, let $F\of{t,z,a} \in \CC^{1,2,0}\of{\bra{0,T},\R, \of{0,\infty}}$ be such that $F$, $\pardiffQ{F}{t}$, $a \pardiffQ{F}{z}$ and $a \pardiffQ[2]{F}{z}$ are admissible functionals of $Z$.
	Then the following It\^o~formula holds true
		\begin{align} \label{eq:Ito}
			F\of{T,Z_T,A} 
			= 	 
			F\of{0,0,A} 
			&+ 
			\int_{0}^T \pardiffQ{F}{t} \of{t,Z_t,A} \dd{t}
			+ 
			\int_0^T\nu\of{t} \pardiffQ{F}{z} \of{t,Z_t,A} \dd{X_t} 
			\nonumber \\ &+ 
			\frac{A}{2} \int_0^T \diff{t} \norm{M_{-}^H\of{\nu\1_{\of{0,t}}}}^2_{2} 
			\productspacing 
			\pardiffQ[2]{F}{z} \of{t,Z_t,A} \dd{t} 
			\period
		\end{align}
	The equality is to be understood in $L^2_A$.
\end{theorem}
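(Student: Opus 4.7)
The natural strategy is to condition on the random scaling $A$ and reduce the claim to Bender's It\^o formula~\cite{B03b} for $B^H$ on the probability space $\of{\Omega,\sigma\of{B},\Pb}$. Writing $Y_t \coloneqq \int_0^t \nu\of{s} \dd{B^H_s}$, Corollary~\ref{result:Cor_integralXtoBH} gives $Z_t = \sqrt{A}\,Y_t$, so on the event $\sset{A=a}$ the quantity $F\of{T,Z_T,A}$ becomes $\tilde F_a\of{T,Y_T}$ with $\tilde F_a\of{t,y} \coloneqq F\of{t,\sqrt{a}\,y,a}$. The chain-rule identities $\pardiffQ{\tilde F_a}{t} = \pardiffQ{F}{t}$, $\pardiffQ{\tilde F_a}{y} = \sqrt{a}\pardiffQ{F}{z}$ and $\pardiffQ[2]{\tilde F_a}{y} = a\pardiffQ[2]{F}{z}$ (all evaluated at $\of{t,\sqrt{a}\,y,a}$) already account for the factor $A$ appearing in front of the second-derivative term of~\eqref{eq:Ito}, and simultaneously explain why the admissibility hypothesis is imposed on $a\pardiffQ{F}{z}$ and $a\pardiffQ[2]{F}{z}$ rather than on the bare $\pardiffQ{F}{z}$ and $\pardiffQ[2]{F}{z}$.

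First I would check, for each fixed $a>0$, that $\tilde F_a$ and its $\of{t,y}$-derivatives satisfy the growth window required by~\cite{B03b}: the admissibility of the relevant derivatives gives, for each fixed $a$, bounds of the shape $\abs{\tilde G_a\of{t,y}} \leq C\e{ca}\e{\lambda y^2}$ with $\lambda < \of{2T^H\max_{s\in\bra{0,T}}\abs{\nu\of{s}}}^{-2}$, which is precisely the admissible range in the unscaled theory. Bender's It\^o formula then yields, as an equality in $L^2\of{\Omega,\sigma\of{B},\Pb}$,
\begin{align*}
	F\of{T,\sqrt{a}\,Y_T,a}
	&= F\of{0,0,a} + \int_0^T \pardiffQ{F}{t}\of{t,\sqrt{a}\,Y_t,a}\dd{t} + \sqrt{a}\int_0^T \nu\of{t}\pardiffQ{F}{z}\of{t,\sqrt{a}\,Y_t,a}\dd{B^H_t} \\
	&\quad + \frac{a}{2}\int_0^T \diff{t}\norm{M_-^H\of{\nu\1_{\of{0,t}}}}_2^2\productspacing\pardiffQ[2]{F}{z}\of{t,\sqrt{a}\,Y_t,a}\dd{t} \period
\end{align*}
Substituting $a=A$ is legitimate because $A$ is independent of $B$, and the stochastic term is then identified with the fractional It\^o integral against $X$ via Theorem~\ref{result:integralXtoBH} applied to $z_t\of{a,B_\bullet} \coloneqq \nu\of{t}\pardiffQ{F}{z}\of{t,\sqrt{a}\,Y_t,a}$.

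It remains to upgrade the resulting pointwise identity to an equality in $L^2_A$. The non-stochastic terms lie in $L^2_A$ by Lemma~\ref{result:GBound} combined with Proposition~\ref{result:pettisExist} applied to the $L^2_A$-valued integrands, while the stochastic integral lives in $L^2_A$ precisely because its norm is controlled by the same admissibility estimates, which is exactly the hypothesis feeding Theorem~\ref{result:integralXtoBH}. The main obstacle I anticipate is the bookkeeping around the interplay between the $a$-dependent growth constants and the $\Pb^A$-integrability: one must ensure that the exponential bound $\e{ca}$ surviving at each fixed $a$ is $\Pb^A$-integrable uniformly in $t \in \bra{0,T}$, which is precisely what the condition $c<R_A/2$ built into Definition~\ref{definition:admFunc} guarantees through the finiteness of $\laplace\bra{A}\of{-2c}$.
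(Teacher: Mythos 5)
Your route is genuinely different from the paper's. You invoke Bender's It\^o formula as a finished theorem for each fixed value $a$ of the scaling and then substitute $a=A$; the paper never does this. It works throughout at the level of the $S_A$-transform: using only the $t$-differentiability of $S\bra{F\of{t,\sqrt{a}Y_t,a}}\of{\sqrt{a}\eta}$ (a step extracted from the \emph{proof} of Theorem~5.3 in \cite{B03b}, not its statement), it differentiates $S_A\bra{F\of{t,Z_t,A}}\of{\eta}$ under the expectation over $A$ (justified by the uniform bounds of Remark~\ref{result:SofGBound}), integrates back up using the $L^2_A$-continuity of Lemma~\ref{result:funcCont}, and reads off the stochastic term directly from Definition~\ref{definition:fracInt}. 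Your bookkeeping is correct and matches the paper's: the chain rule $\pardiffQ[2]{}{y}F\of{t,\sqrt{a}y,a}=a\pardiffQ[2]{F}{z}$ is exactly where the factor $A/2$ comes from, and admissibility of $a\pardiffQ{F}{z}$ and $a\pardiffQ[2]{F}{z}$ does deliver, for each fixed $a$, precisely the growth window $\lambda<\of{2T^H\max_{s\in\bra{0,T}}\abs{\nu\of{s}}}^{-2}$ required by the unscaled theory.

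The weak point is the sentence ``substituting $a=A$ is legitimate because $A$ is independent of $B$.'' For each fixed $a$, Bender's formula is an identity between equivalence classes in $L^2\of{\Omega,\sigma\of{B},\Pb}$, i.e.\ it holds off a $\Pb$-null set $N_a$ depending on $a$; the union of the $N_a$ over uncountably many $a$ need not be null, and, more seriously, the integral $\sqrt{a}\int_0^T\nu\of{t}\pardiffQ{F}{z}\of{t,\sqrt{a}Y_t,a}\dd{B^H_t}$ is for each $a$ only defined up to a null set, so the object $\of{\cdots}_{a=A}$ is not automatically a well-defined random variable. Your appeal to Theorem~\ref{result:integralXtoBH} is then slightly circular, since its conclusion is conditional on the right-hand side already being an element of $L^2_A$ --- which is exactly what has to be produced. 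This is fixable: define the version of the stochastic integral for each $a$ by the algebraic identity itself, namely as $F\of{T,\sqrt{a}Y_T,a}-F\of{0,0,a}$ minus the two Lebesgue integrals (all evaluated at $\of{t,\sqrt{a}Y_t,a}$); this expression is jointly measurable in $\of{a,\omega}$ and square-integrable by Lemma~\ref{result:GBound}, and a Fubini argument over $\Pb^A\otimes\Pb\vert_{\sigma\of{B}}$ converts the family of a.s.\ identities into a single identity in $L^2_A$. Note that the paper's $S_A$-transform route sidesteps this entirely and, as it remarks at the end of its proof, \emph{establishes} the existence of the fractional It\^o integral $\int_0^T\nu\of{t}\pardiffQ{F}{z}\of{t,Z_t,A}\dd{X_t}$ rather than assuming it. With the measurable-version argument added, your proof goes through.
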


\begin{proof}
	Before we begin the proof, 
	we notice that both Pettis integrals on the right-hand side exist by Proposition~\ref{result:pettisExist}, together with Lemma~\ref{result:regularityVar} and Lemma~\ref{result:GBound}. 
	As before, we write $Z = \sqrt{A}Y$, where the process $Y$ is given by~\eqref{eq:Y}. 
	We prove the identity by showing that the $S_A$-transforms of both sides coincide for all $\eta \in D_A$. 
	For that we use the representation
		\begin{align*}
			S_A\bra{{F\of{t,Z_t,A}}}\of{\eta} = 
			\E\bra{\vphantom{\Big\vert}\of{S\bra{{F\of{t,\sqrt{a}Y_t,a}}}\of{\sqrt{a}\eta}}_{a = A}
			\productspacing
			\varepsilon_A^\eta\of{A}} 
			\period
		\end{align*}
	The $S$-transform $S\bra{{F\of{t,\sqrt{a}Y_t,a}}}\of{\sqrt{a}\eta}$ is, 
	as shown in the proof of Theorem~5.3 in~\cite{B03b}, differentiable with respect to $t$ on $\of{0,T}$ with derivative
		\begin{align*}
			S\bra{\pardiffQ{F}{t} \of{t,\sqrt{a}Y_t,a}}\of{\sqrt{a}\eta} 
			&+ 
			\of{M_+^H\eta}\of{t} 
			\productspacing 
			\nu\of{t} 
			\productspacing 
			S\bra{a \pardiffQ{F}{z} \of{t,\sqrt{a}Y_t,a}}\of{\sqrt{a}\eta}  
			\\ &+
			\frac{1}{2} \diff{t} \norm{M_{-}^H\of{\nu\1_{\of{0,t}}}}^2_{2}
			\productspacing 
			S\bra{a \pardiffQ[2]{F}{z} \of{t,\sqrt{a}Y_t,a}}\of{\sqrt{a}\eta} 
			\period
		\end{align*}
	From Corollary~\ref{result:GCont}, Remark~\ref{result:SofGBound} and Lemma~\ref{result:regularityVar},
	we can infer that this derivative is continuous on $\of{0,T}$ for all $0 < a$. 
	As $F$, $\pardiffQ{F}{t}$, $a \pardiffQ{F}{z}$ and $a \pardiffQ[2]{F}{z}$ are all admissible functionals of $Z$, 
	Remark~\ref{result:SofGBound} furthermore implies, 
	that the absolute value of the derivative is bounded,
	locally independent of $t$, by $C \e{\of{c + \frac{1}{2}\norm{\eta}^2_{2}}a}$, 
	where $C$ and $c$ are some positive constants. 
	As 
		\begin{align*}
			C \e{\of{c + \frac{1}{2}\norm{\eta}^2_{2}}a} 
			\productspacing
			\varepsilon_A^\eta\of{a}
		\end{align*}
	is integrable with respect to $\Pb^A$ (again see Remark~\ref{result:SofGBound}), 
	we can differentiate $S_A\bra{{F\of{t,Z_t,A}}}\of{\eta}$ on $\of{0,T}$, and the continuous derivative is given by
		\begin{align*}
			 \diff{t} S_A\bra{{F\of{t,Z_t,A}}}\of{\eta} 
		 	=
		 	\E\bra{\of{\diff{t} S\bra{{F\of{t,\sqrt{a}Y_t,a}}}\of{\sqrt{a}\eta}}_{a = A}
		 	\productspacing
		 	\varepsilon_A^\eta\of{A}} 
		 	\period 	
		\end{align*}
	The right-hand side then evaluates to 
		\begin{align*}
			\E \Bigg[ \Bigg(
			S \Bigg[ \pardiffQ{F}{t} & \of{t,\sqrt{a}Y_t,a} \Bigg] \of{\sqrt{a}\eta} 
			+ 
			\of{M_+^H\eta}\of{t} 
			\productspacing 
			\nu\of{t} 
			\productspacing 
			S\bra{a \pardiffQ{F}{z} \of{t,\sqrt{a}Y_t,a}}\of{\sqrt{a}\eta} 
			\\ &+
			\frac{1}{2} \diff{t} \norm{M_{-}^H\of{\nu\1_{\of{0,t}}}}^2_{2} 
			\productspacing
			S\bra{a \pardiffQ[2]{F}{z} \of{t,\sqrt{a}Y_t,a}}\of{\sqrt{a}\eta} \Bigg)_{a = A}
			\productspacing
			\varepsilon_A^\eta\of{a} \Bigg]
		\end{align*}
	or
		\begin{align*}
			S_A\bra{\pardiffQ{F}{t}\of{t,Z_t,A}}\of{\eta} 
			&+
			\of{M_+^H\eta}\of{t} 
			\productspacing 
			\nu\of{t} 
			\productspacing 
			S_A\bra{A \pardiffQ{F}{z}\of{t,Z_t,A}}\of{\eta} 
			\\ &+
			\frac{1}{2} \diff{t} \norm{M_{-}^H\of{\nu\1_{\of{0,t}}}}^2_{2} 
			\productspacing			
			S_A\bra{A \pardiffQ[2]{F}{z}\of{t,Z_t,A}}\of{\eta} 
		\end{align*}
	respectively.
	As $F\of{t,Z_t,A}$ is $L^2_A$ continuous with respect to $t$, we have
		\begin{align*}
			S_A\bra{F\of{T,Z_T A} - F\of{0,0,A}}\of{\eta} 
			&= 
			\lim_{\epsilon \to 0} S_A\bra{F\of{T-\epsilon,X_{T-\epsilon},A} 
			- 
			F\of{\epsilon,X_\epsilon,A}}\of{\eta}  
			\period
		\end{align*}	
	Using the representation of the derivative of $S_A\bra{{F\of{t,Z_t,A}}}\of{\eta}$ established above, 
	the limit on the right-hand side takes the form
		\begin{align*}
			\int_0^T S_A \bigg[ \pardiffQ{F}{t} & \of{t,Z_t,A} \bigg] \of{\eta} \dd{t} 
			+
			\int_0^T \of{M_+^H\eta}\of{t} \productspacing
			\nu\of{t} 
			\productspacing
			S_A\bra{A \pardiffQ{F}{z}\of{t,Z_t,A}}\of{\eta} \dd{t} 
			\\ &+ 
			\frac{1}{2} \int_0^T \diff{t} \norm{M_{-}^H\of{\nu\1_{\of{0,t}}}}^2_{2} 
			\productspacing
			 S_A\bra{A \pardiffQ[2]{F}{z} \of{t,Z_t,A}}\of{\eta} \dd{t} 
			 \period
		\end{align*}
	Note here that the integrals above are ordinary Lebesgue integrals.
	Interchanging the $S_A$-transform and the Pettis integrals by Remark~\ref{result:pettisInter} leads us to the desired statement as, 
	by the definition of the fractional  It\^o~integral, 
	we have
		\begin{align*}
			S_A\bra{\int_0^T \nu\of{t} \productspacing \pardiffQ{F}{z} \of{t,Z_t,A} \dd{X_t}}\of{\eta}
			= 
			\int_0^T \of{M_+^H\eta}\of{t} 
			\productspacing
			\nu\of{t} 
			\productspacing
			S_A\bra{A \pardiffQ{F}{z} \of{t,Z_t,A}}\of{\eta} \dd{t}
			\period
		\end{align*}
	In particular, the above considerations show that the fractional It\^o integral exists in the first place.
\end{proof}

The following corollary is particularly helpful for the applications to generalized time-fractional evolution equations in the upcoming section.

\begin{corollary} \label{result:itoFormula_cor}
Let $T > 0$ and let $Z \in \mathfrak{DI}_T\of{X}$ result from integrating the function $\nu$.
	Consider a function $V \in \CC^2\of{\R}$ satisfying
		\begin{align} \label{eq:ItoCorCondition}
			\abs{V\of{z}} \comma \mediumspace
			\abs{\diffQ{V}{z}\of{z}}\comma \mediumspace
			\abs{\diffQ[2]{V}{z}\of{z}} 
			\lesssim
			\e{k \abs{z}}
		\end{align}
	for some $0 \leq k < \sqrt{2 R_A} \productspacing \of{2 T^H \max_{s \in \bra{0,T}}{\abs{\nu\of{s}}}}^{-1}$.
	Then, for all  $x_0 \in \R$, the function $V\of{x_0 + \cdot}$ satisfies the assumptions of Theorem~\ref{result:itoFormula} and the It\^o formula~\eqref{eq:Ito} yields
		\begin{align} \label{eq:ItoCor}
			V\of{x_0 + Z_T} 
			= 
			V\of{x_0} &+ 
			\int_0^T  \nu\of{t} 
			\productspacing
			\diffQ{V}{z} \of{x_0 + Z_t} \dd{X_t} 
			\nonumber \\ &+ 
			\frac{A}{2} \int_0^T \diff{t} \norm{M_{-}^H\of{\nu\1_{\of{0,t}}}}^2_{2} 
			\productspacing 
			\diffQ[2]{V}{z} \of{x_0 + Z_t} \dd{t} 
			\period
		\end{align}
\end{corollary}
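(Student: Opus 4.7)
The plan is to set $F(t,z,a) := V(x_0 + z)$ and verify that this $F$ satisfies every hypothesis of Theorem~\ref{result:itoFormula}; then the corollary is an immediate specialization, since $\partial_t F \equiv 0$ collapses the first integral on the right-hand side of~\eqref{eq:Ito}.

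Regularity is clear: because $V \in \CC^2(\R)$, the function $F$ is in $\CC^{1,2,0}([0,T],\R,(0,\infty))$ (it is trivially smooth in $a$ since it does not depend on $a$, and trivially smooth in $t$ for the same reason). So the only thing to check is that the four functionals $F$, $\pardiffQ{F}{t}$, $a\,\pardiffQ{F}{z}$, $a\,\pardiffQ[2]{F}{z}$ are all admissible functionals of $Z$ in the sense of Definition~\ref{definition:admFunc}. The derivative $\pardiffQ{F}{t} \equiv 0$ is trivially admissible. For the remaining three, the assumption~\eqref{eq:ItoCorCondition} reduces the task to bounding $G(a,z) := p(a)\, e^{k|x_0 + \sqrt{a}\,z|}$, with $p(a) \in \{1, a\}$, by an expression of the form $C e^{c a + \lambda z^2}$ with $c < R_A/2$ and $\lambda < (2T^H \max|\nu|)^{-2}$.

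The key estimate is the elementary inequality
\begin{align*}
k\sqrt{a}\,|z| \;\le\; \varepsilon\, a \,+\, \frac{k^2}{4\varepsilon}\, z^2
\end{align*}
valid for every $\varepsilon > 0$, together with $e^{k|x_0|}$ being a harmless constant. This yields
\begin{align*}
e^{k|x_0 + \sqrt{a}\,z|} \;\le\; e^{k|x_0|}\, e^{\varepsilon a + \frac{k^2}{4\varepsilon} z^2}.
\end{align*}
The constraints $\varepsilon < R_A/2$ and $k^2/(4\varepsilon) < (2T^H \max|\nu|)^{-2}$ are simultaneously satisfiable precisely when $k^2 T^{2H}(\max|\nu|)^2 < R_A/2$, which is exactly the stated bound $k < \sqrt{2R_A}\,(2T^H \max_s|\nu(s)|)^{-1}$; one simply picks $\varepsilon$ in the open interval between $k^2 T^{2H}(\max|\nu|)^2$ and $R_A/2$. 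For $a\,\pardiffQ{F}{z}$ and $a\,\pardiffQ[2]{F}{z}$ there is an additional factor $a$ out front, absorbed via $a \le \delta^{-1} e^{\delta a}$ for an arbitrarily small $\delta > 0$; since our choice of $\varepsilon$ lies strictly below $R_A/2$, there is room to enlarge it to $\varepsilon + \delta$ while staying admissible.

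The main (and only genuinely delicate) obstacle is getting the constants right in this admissibility check, in particular ensuring that the bound on $k$ in the corollary matches precisely what the interplay between the two constraints $c < R_A/2$ and $\lambda < (2T^H \max|\nu|)^{-2}$ forces. Once all four admissibility verifications are done, Theorem~\ref{result:itoFormula} applies directly. Substituting $\pardiffQ{F}{t} \equiv 0$, $\pardiffQ{F}{z}(t,z,a) = \diffQ{V}{z}(x_0 + z)$, and $\pardiffQ[2]{F}{z}(t,z,a) = \diffQ[2]{V}{z}(x_0 + z)$ into~\eqref{eq:Ito} produces exactly~\eqref{eq:ItoCor}.
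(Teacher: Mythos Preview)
Your proposal is correct and follows essentially the same approach as the paper's own proof. The only cosmetic differences are that the paper first reduces to the case $x_0 = 0$ (noting $V(x_0+\cdot)$ satisfies the same exponential bound), and parametrizes Young's inequality as $k\sqrt{a}\,|z| \le \frac{k^2}{4\lambda}a + \lambda z^2$ with $\lambda$ the $z^2$-coefficient rather than your $\varepsilon$ as the $a$-coefficient; the resulting admissibility check and the constant-matching argument are otherwise identical.
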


\begin{proof}
	Let  $x_0 \in \R$. 
	As equation \eqref{eq:ItoCor} is a simple application of the It\^o~formula, it suffices to verify that $V\of{x_0 + \cdot}$ satisfies the assumptions of the formula. 
	Namely we verify that $V\of{x_0 + \cdot}$, $a\pardiffQ{V\of{x_0 + \cdot}}{z}$ and $a\pardiffQ[2]{V\of{x_0 + \cdot}}{z}$ are admissible functionals of $Z$. 
	Note that for any $x_0$ the function $V\of{x_0 + \cdot}$ again is in $\CC^2\of{\R}$ and satisfies \eqref{eq:ItoCorCondition} with the same constant $k$. 
	Hence, it is enough to consider the case $x_0 = 0$. It is then straight forward to see that
		\begin{align*}
			\abs{V\of{\sqrt{a}x}} \comma \mediumspace
			\abs{ \delimiterspacing a \diffQ{V}{z}\of{\sqrt{a}z} \delimiterspacing } \comma \mediumspace
			\abs{\delimiterspacing a \diffQ[2]{V}{z}\of{\sqrt{a}z} \delimiterspacing }
			\lesssim
			\of{a + 1} \e{k \sqrt{a} \abs{x}}	
			\period
		\end{align*}
	For any $\lambda > 0$ and any $a > 0$ we have $k\sqrt{a}\abs{x} \leq k^2a / \of{4\lambda} + \lambda x^2$. 
	Furthermore for any $\epsilon > 0$ it is  $a+1 \lesssim \e{\epsilon a}$. 
	Overall it follows that
		\begin{align*}
			\of{a + 1}\e{k\sqrt{a} \abs{x}} 
			\leq 
			\of{a + 1}\e{ \frac{k^2a}{4\lambda} + \lambda x^2} 
			\lesssim
			\e{ \of{\epsilon + \frac{k^2}{4\lambda}}a + \lambda x^2}. 
		\end{align*}
	Due to the bound on $k$ we can choose a $0 < \lambda < \of{2T^H \max_{s \in \bra{0,T}}{\abs{\nu\of{s}}}}^{-2}$ and a $\epsilon > 0$ small enough, such that $\epsilon + k^2 / \of{4\lambda} < {R_A} / 2$.
	Thus $V$ satisfies the assumptions of the Ito formula.
\end{proof}

We end up this section with two examples of the It\^o~formula.

\begin{example}
	Consider $V(z) \coloneqq  z^2$ and note that $X\in \mathfrak{DI}_\infty\of{X}$ by integrating $\nu = 1$. 
	Thus, by Corollary~\ref{result:itoFormula_cor},
		\begin{align*}
			X^2_t 
			= 
			\int_0^t 2 X_s \dd{X_s} + \frac{A}{2}\int_0^t \diff{s} \norm{M_{-}^H\of{\nu\1_{\of{0,s}}}}^2_{2} \dd{s} \comma \largespace 
			t \geq 0.
	\end{align*}
	Since the second integral trivially evaluates to 
		\begin{align*}
			\int_0^t \diff{s} \norm{M_{-}^H\of{\nu\1_{\of{0,s}}}}^2_{2} \dd{s} =			 \norm{M_{-}^H\of{\nu\1_{\of{0,t}}}}^2_{2} 
			= 
			t^{2H}
			\comma
		\end{align*}		
	we have for all $t \geq 0$
		\begin{align*}
			\int_0^t X_s \dd{X_s} 
			= 
			\frac{1}{2} \of{X^2_t - A t^{2H}}
			\period
		\end{align*}
\end{example}

\begin{example}
	Let $g_0 > 0$ and $\nu$, $r$, $\ffi\,:\,[0,\infty)\to\R$ be some Borel functions.
	We call
		\begin{align*}
			\GG_t 
			\coloneqq 
			g_0\exp\of{\ffi(A) \int_0^t r(s)\dd{s} - \frac{A}{2} \diff{t} \norm{M_{-}^H\of{\nu\1_{\of{0,t}}}}^2_{2} + \int_0^t \nu(s) \dd{X_s}}
		\end{align*}
	a \emphasis{geometric randomly scaled Gaussian process} with coefficients $H$, $g_0$, $\nu$, $r$ and $\ffi$, provided the right-hand side exists as an element of $L^2_A$ for all $t \in [0,\infty)$.  
	If furthermore $\limsup_{a \to \infty} \ffi\of{a}/\sqrt{a} \leq 1$, $r$ is continuous  and 
		\begin{align*}
			Z_t 
			\coloneqq 
			\int_0^t \nu(s) \dd{X_s} \comma \largespace
		 	t \in \left[0,\infty \right) 
			\comma
		\end{align*}	
	belongs to the class $\mathfrak{DI}_\infty\of{X}$, then we have  $\GG_t = F\of{t,Z_t,A}$ for the admissible (for each $T > 0$) functional 
		\begin{align*}
			F\of{t,z,a} 
			\coloneqq
 			g_0 \exp\of{\ffi\of{a}\int_0^t r\of{s} \dd{s} - \frac{a}{2} \diff{t} \norm{M_{-}^{H}\of{\nu\1_{\of{0,t}}}}^2_{2} + z} \comma
		\end{align*}
	and hence $\of{\GG_t}_{t \geq 0}$ exists. 
	Moreover, by the It\^o~formula~\eqref{eq:Ito}, 
	the process $(\GG_t)_{t\geq0}$ solves the following stochastic integral equation
		\begin{align*}
			\GG_t
			=
			g_0 + \ffi\of{A} \int_0^t r\of{s} \productspacing \GG_s \dd{s} 
			+
			\int_0^t \nu\of{s} \productspacing	\GG_s \dd{X_s}.
		\end{align*}
	Furthermore, for some $T > 0$, assume that $\Phi \in \CC^{1,2,0}\of{\bra{0,T} , \of{0,\infty} , \of{0,\infty}}$ is such that $\Phi\of{t,g,a}$, $\pardiff{t} \Phi\of{t,g,a}$, $\pardiff{g} \Phi\of{t,g,a}$, $\pardiff[2]{g} \Phi\of{t,g,a}$ are of at most polynomial growth w.r.t. $g$, $a$ uniformly w.r.t. $t \in \bra{0,T}$. 
	Then $\Phi\of{t,F\of{t,z,a},a}$ is an admissible functional for $Z$ and, again by the It\^o~formula~\eqref{eq:Ito},  the fractional It\^o~integral 
		\begin{align*}
			\int_0^T \nu\of{t} \productspacing \GG_t \pardiffQ{\Phi}{g}\of{t,\GG_t,A} \dd{X_t}
		\end{align*}
	exists and is equal to 
		\begin{align*}
			\Phi\of{T,\GG_T,A} &- \Phi\of{0,g_0,A} - \int_0^T \pardiffQ{\Phi}{t}\of{t,\GG_t,A} \dd{t}
			-
			\ffi\of{A} \int_0^T r\of{t} \productspacing \GG_t \productspacing \pardiffQ{\Phi}{g}\of{t,\GG_t,A} \dd{t} 
			\\ &-
			\frac{A}{2} \int_0^T \diff{t} \norm{M_{-}^{H}\of{\nu\1_{\of{0,t}}}}^2_{2}  \productspacing \GG_t^2  \productspacing \pardiffQ[2]{\Phi}{g}\of{t,\GG_t,A} \dd{t}
			\period
		\end{align*}
\end{example}

\section{Randomly Scaled Gaussian Processes and Related  Evolution Equations}
In this section, we investigate how RSGPs as in Theorem~\ref{result:itoFormula} can be used for solving evolution equations with fractional operators with respect to time. 
It has been shown in the works~\cite{BBB22,BB22} that a class of generalized time fractional evolution equations can be solved by Feynman-Kac type formulas with the use of randomly scaled FBM. 
\par
Now (as before) let $X \coloneqq X^{A,H} \coloneqq \sqrt{A}B^H$, where $B^H \coloneqq \of{B^H_t}_{t \in \left[0,\infty\right)}$ be a FBM, $H\in(0,1)$, 
$A$ be a positive random variable independent from $B^H$ such that its Laplace transform $\laplace\bra{A}$ is holomorphic on some open ball $B_{R_A}(0)$ with maximal radius $R_A > 0$. 
Let the process 
	\begin{align*}
		Z_t 
		\coloneqq
		\int_0^t \nu\of{s} \dd{X_s} \comma \largespace 
		t \in \left[0,\infty\right)
	\end{align*}
belong to the class $\mathfrak{DI}_\infty$ and note that the variance of this process is given by the differentiable function 
	\begin{align*}
		t \mapsto 
		\E[A]\,\norm{M_{-}^{H}\of{\nu\1_{\of{0,t}}}}^2_{2}
	\end{align*}
by  Lemma~4.2. We assume that $\nu$ is such that this variance is a monotonically increasing function. This is the case, e.g., when $\nu$ does not change its sign (see formula~\eqref{eq:new}).
Now let $u_0\in \Sch(\R)$.
We consider
	\begin{align}\label{eq:def_u}
		u \smallspace : \smallspace 
		\left[0,\infty\right) \times \R \to \R \comma \largespace 
		\of{t,x} \mapsto \E\bra{u_0\of{x + Z_t}}.
\end{align}
Applying the It\^o~formula as in Corollary~\ref{result:itoFormula_cor} and taking expectation results in
	\begin{align}\label{eq:itoApp}
		u\of{t,x} 
		=
		u_0\of{x} + \frac{1}{2} \int_0^t \diff{t}\norm{M_{-}^{H}\of{\nu\1_{\of{0,t}}}}^2_{2}
		\productspacing
		\pardiff[2]{x} 
		\E\bra{A u_0\of{x+Z_s}} \dd{s} \comma \largespace
	 	x \in \R \comma \largespace 
	 	t > 0
	 	\period
\end{align}
We wish to clarify the relation between $u\of{t,x} = \E\bra{u_0\of{x + Z_t}}$ and
	\begin{align}\label{eq:u-tilde}
		\tilde{u}\of{t,x}
		\coloneqq
		\E\left[ Au_0(x+Z_t) \right],\qquad t>0,\largespace x\in\R.
	\end{align}
Of course, this relation depends on the diffusion coefficient $A$. 
Below we consider some particular cases of this relation.

\begin{theorem} \label{result:evoEq_gen}
  In the above setting, consider a Borel-measurable mapping
  \begin{align}
    \KK \smallspace : \smallspace
    \of{0,\infty} \times \of{0,\infty} \times \R \to \R \comma \largespace \of{t,s,\xi} \mapsto \KK_{t,s}\of{\xi}
  \end{align}
  and suppose that the Laplace transform of the diffusion coefficient $A$ satisfies the evolution equation
  		\begin{align}\label{eq:L[A]}
    		\laplace\bra{A}\of{t^{2H} \xi} 
    		= 
    		1 + \int_0^t \KK_{t,s}\of{\xi}
    		\productspacing 
    		\laplace\bra{A}\of{s^{2H} \xi} \dd{s} \comma \largespace  
    		t , \xi > 0 
    		\period
	  \end{align}
	Furthermore, suppose that for all fixed $t , s \in \of{0,\infty}$ with $t \geq s$ the function $ \xi \mapsto \abs{\KK_{t,s}\of{\xi^2/2}}$ is in $\mathcal{S}^{\prime}\of{\R}$ and for all fixed $t \in \of{0,\infty}$ and $f \in \mathcal{S}\of{\R}$ we have
    \begin{align} \label{eq:rhoCondition}
      \abs{\KK_{t, \cdot}\of{\xi^2/2}}\of{f} = \int_{\R} \abs{\KK_{t, \cdot}\of{\xi^2/2}}f\of{\xi} \dd{\xi} \in L^1\of{0,t}
      \period
    \end{align}
  Consider also the pseudo-differential operator $\KK_{t,s}\of{- \Delta / 2}$ with symbol $\KK_{t,s}\of{\xi^2/2}$, 
  i.e. (with the Fourier transform $\fourier$)
  	\begin{align*}
  		\KK_{t,s}\of{- \Delta / 2} f\of{x}
    &\coloneqq
    \of{\fourier^{-1}_{\xi \mapsto x} \circ \KK_{t,s}\of{\xi^2/2} \circ \fourier_{x\mapsto\xi}}f\of{x}
    \\ &\equiv
    \frac{1}{2\pi} \int_{\R} \int_\R \e{i\of{x-q}\xi}
		\productspacing    
    \KK_{t,s}\of{\xi^2/2} 
		\productspacing    
    f\of{q} \dd{q} \dd{\xi}.
  \end{align*}
  Then for any $u_0 \in \Sch\of{\R}$ the function $u\of{t,x} = \E\bra{u_0\of{x + Z_t}}$, as defined in formula~\eqref{eq:def_u}, is a solution of the evolution equation
  \begin{align}\label{eq:eveq1}
    u\of{t,x} = 
    u_0\of{x} + 
    \int_0^t \sigma_\nu^\prime\of{s} 
		\productspacing    
    \KK_{\sigma_\nu\of{t},\sigma_\nu\of{s}}\of{- \Delta / 2}u\of{s,x} \dd{s} \comma \largespace 
    t > 0  \comma \largespace 
    x \in \R 
    \comma
  \end{align}
  where the time change $\sigma_\nu$ is given
  by
  \begin{align}\label{eq:sigma}
    \sigma_\nu(t)
    \coloneqq
    \norm{M_{-}^{H}\of{\nu\1_{\of{0,t}}}}^{1/H}_{2}
    \period
  \end{align}
  The solution $u$ is continuous in $t$ and infinitely often differentiable with respect to $x$.
\end{theorem}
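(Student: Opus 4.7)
My plan is to take the spatial Fourier transform of $u\of{t,\cdot}$, invoke the assumed functional equation~\eqref{eq:L[A]} satisfied by $\laplace\bra{A}$, and then invert the transform. Although the It\^o output~\eqref{eq:itoApp} is a natural motivation, it is not strictly needed here; working directly on the Fourier side is cleaner because $u_0 \in \Sch\of{\R}$.

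First, conditional on $A = a$, the integral $Z_t$ is a centered Gaussian (a Wiener-type integral against $\sqrt{a} B^H$) with variance $a \productspacing \norm{M_-^H\of{\nu\1_{\of{0,t}}}}_2^2 = a \productspacing \sigma_\nu\of{t}^{2H}$, by Lemma~\ref{result:regularityVar} and definition~\eqref{eq:sigma}. Unconditioning, using independence of $A$ and $B$, the Fourier transform of $u\of{t,\cdot}$ in $x$ equals
\[
\widehat{u}\of{t,\xi}
=
\widehat{u_0}\of{\xi}
\productspacing
\E\bra{\e{i\xi Z_t}}
=
\widehat{u_0}\of{\xi}
\productspacing
\laplace\bra{A}\of{\sigma_\nu\of{t}^{2H}\xi^2/2}
\period
\]
Specialising~\eqref{eq:L[A]} at time $\sigma_\nu\of{t}$ and argument $\xi^2/2$, and changing variables $r = \sigma_\nu\of{s}$ in the resulting integral (admissible because $t \mapsto \sigma_\nu\of{t}^{2H}$ is monotone by assumption and $C^1$ by Lemma~\ref{result:regularityVar}, with $\sigma_\nu\of{0} = 0$), gives
\[
\laplace\bra{A}\of{\sigma_\nu\of{t}^{2H}\xi^2/2}
=
1 + \int_0^t \sigma_\nu^\prime\of{s} \productspacing \KK_{\sigma_\nu\of{t}, \sigma_\nu\of{s}}\of{\xi^2/2}
\productspacing
\laplace\bra{A}\of{\sigma_\nu\of{s}^{2H}\xi^2/2} \dd{s} \period
\]
Multiplying through by $\widehat{u_0}\of{\xi}$ yields the Fourier-side version of~\eqref{eq:eveq1}.

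Next I invert the Fourier transform. Since $\abs{\laplace\bra{A}} \leq 1$, the family $\sset{\widehat{u}\of{s,\cdot}}_{s \in \bra{0,t}}$ is dominated by $\abs{\widehat{u_0}}$ and hence bounded in $\Sch\of{\R}$; hypothesis~\eqref{eq:rhoCondition} then permits Fubini to interchange $\int_0^t \dd{s}$ with the inverse Fourier transform in $\xi$. By the very definition of $\KK_{t,s}\of{-\Delta/2}$, this recovers equation~\eqref{eq:eveq1}. The remaining regularity claims are immediate: continuity of $u$ in $t$ follows from $L^2_A$-continuity of $Z_t$ (Lemma~\ref{result:intCont}) together with boundedness and uniform continuity of $u_0$, and $\CC^\infty$-smoothness of $u$ in $x$ follows from differentiation under the expectation, since every derivative of $u_0 \in \Sch\of{\R}$ is bounded.

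The principal technical obstacle is the Fubini step. When $\KK_{t,s}\of{\xi^2/2}$ is only a tempered distribution, one must interpret the pairing $\KK_{\sigma_\nu\of{t},\sigma_\nu\of{s}}\of{\xi^2/2} \productspacing \widehat{u}\of{s,\xi}$ distributionally and carefully justify exchanging the order of integration. Condition~\eqref{eq:rhoCondition} is tailored for precisely this: it guarantees that, after testing $\abs{\KK_{t,\cdot}\of{\xi^2/2}}$ against an arbitrary Schwartz function, the result is in $L^1\of{0,t}$, which is exactly the hypothesis needed to run a classical Fubini argument at both the $\dd{s}\dd{\xi}$ and the $\dd{s}\dd{x}\dd{\xi}$ levels.
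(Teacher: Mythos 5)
Your proposal is correct, and it reaches the conclusion by a genuinely more direct route than the paper. The paper's proof starts from the It\^o-derived identity~\eqref{eq:itoApp}, introduces $\tilde{u}\of{s,x}=\E\bra{Au_0\of{x+Z_s}}$, computes $\fourier\bra{\tilde{u}\of{s,\cdot}}=-\fourier\bra{u_0}\laplace\bra{A}^{\prime}\of{\sigma_\nu^{2H}\of{s}\xi^2/2}$, integrates this derivative in $s$ via the fundamental theorem of calculus to produce $\laplace\bra{A}\of{\sigma_\nu^{2H}\of{t}\xi^2/2}-1$, and only then invokes~\eqref{eq:L[A]}; you instead compute $\fourier\bra{u\of{t,\cdot}}=\fourier\bra{u_0}\laplace\bra{A}\of{\sigma_\nu^{2H}\of{t}\xi^2/2}$ and substitute it straight into~\eqref{eq:L[A]}, bypassing $\tilde{u}$ and the It\^o formula entirely. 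From that point on the two arguments coincide: the same change of variables $r=\sigma_\nu\of{s}$ (legitimate since $\sigma_\nu$ is increasing by the standing assumption and $\sigma_\nu^{2H}$ is $\CC^1$ by Lemma~\ref{result:regularityVar}), the same use of~\eqref{eq:rhoCondition} together with $\abs{\laplace\bra{A}}\leq 1$ to get joint integrability in $\of{s,\xi}$, and the same Fourier inversion. What your route buys is economy and the observation that the theorem is really a statement about the one-dimensional marginals of $Z$; what the paper's route buys is the explicit identification of the second-order It\^o term $\frac{A}{2}\int_0^t\diff{s}\norm{M_-^H\of{\nu\1_{\of{0,s}}}}_2^2\,\pardiff[2]{x}(\cdot)\dd{s}$ with the nonlocal operator $\KK_{\sigma_\nu\of{t},\sigma_\nu\of{s}}\of{-\Delta/2}$, which is the conceptual point of the section (and is reused in the later examples via~\eqref{eq:itoApp}). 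One small imprecision: dominating $\widehat{u}\of{s,\cdot}$ by $\abs{\widehat{u_0}}$ does not make the family ``bounded in $\Sch\of{\R}$''; what you actually need, and what holds, is that $\abs{\widehat{u_0}}$ is pointwise dominated by a nonnegative Schwartz function, so that~\eqref{eq:rhoCondition} (after the substitution $r=\sigma_\nu\of{s}$) applies. Also note that $\KK_{t,s}\of{\xi^2/2}$ is by hypothesis a genuine Borel function, so the distributional-pairing caveat in your last paragraph is not needed.
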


\begin{proof}
	Due to $u_0 \in \mathcal{S}\of{\R}$ the functions $\tilde{u}$ and $u$ are infinitely often differentiable with respect to the space variable $x$. And all these partial  derivatives  with respect to the space variable $u^{(m)}$ satisfy: $u^{\of{m}}\of{t, \cdot}$ and $\tilde{u}^{\of{m}}\of{t, \cdot}$ are $L^1$ functions;  $u^{\of{m}}\of{\cdot, x}$ and $\tilde{u}^{\of{m}}\of{\cdot, x}$ are continuous on $\of{0,\infty}$ for all $x \in \R$ and $m \geq 0$. 
	Therefore, the Fourier transforms of $\tilde{u}\of{t,\cdot}$ and $u\of{t,\cdot}$ exist for all $t > 0$ and can be calculated to be 
		\begin{align}\label{eq:fourier_calc}
			\fourier\bra{u\of{t, \cdot}}\of{\xi} 
			&=
			\of{2 \pi}^{-1/2} \int_\R \e{-i \xi x} 
			\productspacing
			\E\bra{u_0\of{x + Z_t}} \dd{x} 
			\nonumber \\ &=
			\E\bra{\of{2 \pi}^{-1/2} \int_\R \e{-i \xi \of{x - Z_t}} 
			\productspacing
			u_0\of{x} \dd{x}} 
			\nonumber \\ &= 
			\fourier\bra{u_0}\of{\xi} 
			\productspacing			
			\E\bra{\e{- A \sigma_\nu^{2H}\of{t} \frac{\xi^2}{2}}} 
			=
			\fourier\bra{u_0}\of{\xi} 
			\productspacing
			\laplace\bra{A}\of{\sigma_\nu^{2H}\of{t} \frac{\xi^2}{2}} 
	\end{align}
	and, similarly, 
		\begin{align*}
			\fourier\bra{\tilde{u}\of{t, \cdot}}\of{\xi} 
			=	 
			\fourier\bra{u_0}\of{\xi} 
			\productspacing
			\E\bra{A \e{-A \sigma_\nu^{2H}\of{t} \frac{\xi^2}{2}}} 
			=
			-\fourier\bra{u_0}\of{\xi} 
			\productspacing
			\laplace\bra{A}^{\prime}\of{\sigma_\nu^{2H}\of{t} \frac{\xi^2}{2}} 
			\period
		\end{align*}
	Using this and due to~\eqref{eq:L[A]}, the Fourier transform of the integral on the right-hand side of equation~\eqref{eq:itoApp} can be calculated to be
		\begin{align} \label{eq:fourierTrafoFunctions}
			\fourier\Bigg[ \int_0^t \of{\sigma_\nu^{2H}}^\prime\of{s}
			\productspacing
			& \frac{1}{2}\pardiff[2]{x} \tilde{u}\of{s,x} \dd{s} \Bigg]\of{\xi}
     	\nonumber \\ &= 
     	\fourier\bra{u_0}\of{\xi} \int_0^t \of{\sigma_\nu^{2H}}^\prime\of{s} 
     	\productspacing
    		\frac{\xi^2}{2} 
    		\productspacing
     	\laplace\bra{A}^\prime\of{\sigma_\nu^{2H}\of{s} \frac{\xi^2}{2}} \dd{s} 
			\nonumber \\ &=
    		\fourier\bra{u_0}\of{\xi} \of{\laplace\bra{A}\of{\sigma_\nu^{2H}\of{t} \frac{\xi^2}{2}} - 1}
			\nonumber \\ &=
    	\fourier\bra{u_0}\of{\xi} \int_0^{\sigma_\nu\of{t}} \KK_{\sigma_\nu\of{t},s}\of{\xi^2 / 2} 
    		\productspacing
    		\laplace\bra{A}\of{s^{2H} \frac{\xi^2}{2}} \dd{s}
			\nonumber \\ &=
    		\fourier\bra{u_0}\of{\xi}\int_0^t \sigma_\nu^\prime\of{s}
   		\productspacing
    		\KK_{\sigma_\nu\of{t},\sigma_\nu\of{s}} \of{\xi^2 / 2} 
    		\productspacing
    		\laplace\bra{A}\of{\sigma_\nu^{2H}\of{s} \frac{\xi^2}{2}} \dd{s}
   		\period
		\end{align}
  Note at this point that $\fourier\bra{u_0} \in \mathcal{S}\of{\R}$ and that
   	\begin{align*}
    		\of{s, \xi}	\mapsto \sigma^{\prime}_\nu\of{s} 
    		\productspacing
    		\laplace\bra{A}\of{\sigma_\nu^{2H}\of{s}\frac{\xi^2}{2}}
   	\end{align*}
 	is bounded on $\bra{0,t} \times \R$. 
  	Hence, by assumption~\eqref{eq:rhoCondition} the function
    \begin{align}\label{eq:transformedFunction}
  			\fourier\bra{u_0}\of{\xi}
  			\productspacing
  			\sigma_\nu^\prime\of{s}
  			\productspacing
  			\KK_{\sigma_\nu\of{t},\sigma_\nu\of{s}}\of{\xi^2 / 2} 
  			\productspacing
  			\laplace\bra{A}\of{\sigma_\nu^{2H}\of{s} \frac{\xi^2}{2}}
   	\end{align}
	is an element of $L^1\of{\bra{0,t} \times \R}$ and, in particular, the functions in~\eqref{eq:fourierTrafoFunctions} is in $L^1\of{\R}$ with respect to $\xi$.
	The application of the inverse Fourier transform to \eqref{eq:fourierTrafoFunctions} therefore yields
		\begin{align*}
    		\int_0^t \of{\sigma_\nu^{2H}}^\prime\of{s} 
    		\productspacing
    		& \frac{1}{2}\pardiff[2]{x} \E\bra{A u_0\of{x + Z_s}} \dd{s}
      	\\ &=
      	\fourier^{-1}\bra{\fourier\bra{u_0}\of{\xi}\int_0^t \sigma_\nu^\prime\of{s} 
      	\productspacing
      	\KK_{\sigma_\nu\of{t},\sigma_\nu\of{s}} \of{\xi^2 / 2} 
      	\productspacing
      	\laplace\bra{A}\of{\sigma_\nu^{2H}\of{s} \frac{\xi^2}{2}} \dd{s}}\of{x}
    		\\ &= 
     	\int_0^t \sigma_\nu^\prime\of{s} 
     	\productspacing
     	\KK_{\sigma_\nu\of{t},\sigma_\nu\of{s}}\of{-\Delta / 2} \E\bra{u_0\of{x + Z_s}} \dd{s}
   		\period
	\end{align*}
	Together with equation~\eqref{eq:itoApp} this shows the statement.
\end{proof}

\begin{example}[Gamma-Grey Brownian Motion]
	Let first $\nu \equiv 1$, i.e.  $Z_t = X_t = \sqrt{A}B^H_t$, $t \geq 0$, $H \in \of{0,1}$. Note that $\operatorname{Var}\bra{B^H_t} = t^{2H}$. 
	Let $\rho \in \of{0,1}$ and the diffusion coefficient $A \coloneqq A_\rho$ have the following probability density function
		\begin{align*}
			f_\rho\of{a}
			\coloneqq
			\frac{1}{\Gamma\of{\rho}\Gamma\of{1-\rho}a\of{a-1}^{\rho}}\1_{\of{1,\infty}}\of{a}.
		\end{align*}
	Then the process $\of{X_t}_{t \geq 0}$ coincides in distribution with gamma-grey Brownian motion considered in~\cite{BCG23} (cf. the proof of Theorem~4.1 in~\cite{BCG23}). 
	Note that the characteristic function of $\of{X_t}_{t\geq0}$ is given by
		\begin{align*}
			\E\left[ \e{i\lambda X_t} \right]
			=
			\E\left[ \e{-A_\rho t^{2H}\lambda^2/2 }  \right]
			=
			\laplace[A_\rho]\left(t^{2H}\lambda^2/2\right)
			=
			\frac{\Gamma(\rho,t^{2H}\lambda^2/2)}{\Gamma\of{\rho}},
	\end{align*}
	where $\Gamma\of{\rho,x} \coloneqq \int_x^\infty \e{-w}w^{\rho-1} \dd{w}$ is the upper incomplete gamma function, $\Gamma\of{\rho} \coloneqq \Gamma\of{\rho,0}$. 
	Since $\Gamma\of{\rho,\cdot}$ is an entire function for $\rho \in \of{0,1}$, the diffusion coefficient $A_\rho$ satisfies our assumptions.
	 Moreover, by Theorem~5.2 of~\cite{BCG23}, we have for all $t \geq 0$, $\xi > 0$
		\begin{align*}
			\laplace\bra{A_\rho} & \of{t^{2H}\xi}
			\\&=
			1 - 2H \int_0^t \xi \e{-\xi\of{t^{2H} - s^{2H}}}\e[\rho]{\xi\of{t^{2H} - s^{2H}}} s^{2H-1}\laplace\bra{A_\rho}\of{s^{2H}\xi} \dd{s}
			\comma
		\end{align*}
	where $\e[\rho]z \coloneqq z^{\rho-1}E_{\rho,\rho}\of{z^\rho}$ is the so-called $\rho$-exponential function (see~\cite{KST06}, p.~50, formula~(1.10.11)).
	Thus, equality~\eqref{eq:L[A]} is true with
		\begin{align*}
			\KK_{t,s}\of{\xi}
			\coloneqq
			-2Hs^{2H-1} \xi \e{-\xi\of{t^{2H} - s^{2H}}}\e[\rho]{\xi\of{t^{2H} - s^{2H}}}. 
		\end{align*}
	With the application of Theorem~\ref{result:evoEq_gen} in mind we now show that, for all $t \geq s$, the above kernel $\KK_{t,s}$ satisfies $\abs{\KK_{t,s}\of{\xi^2/2}} \in \Sch^{\prime}\of{\R}$,
	as well as \eqref{eq:rhoCondition} for all $t \geq 0$ and $f \in \Sch\of{\R}$.
	Regarding the first property we note that the asymptotic behavior of the Mittag-Leffler function $E_{\rho,\rho}\of{\xi^\rho}$ for large positive arguments is know to be
		\begin{align*}
  			E_{\rho,\rho}\of{\xi^\rho} 
  			= 
  			\xi^{1 - \rho} \productspacing\frac{\e{\xi}}{\rho} + \mathcal{O}\of{\xi^{-\rho}} \comma \largespace 
  			\xi \to \infty 
  			\comma
		\end{align*}
	see equation~(6.10) in~\cite{HMS11}. 
	Hence it is 
		\begin{align}
			\e{-\xi} E_{\rho,\rho}\of{\xi^\rho} 
			\lesssim 
			1 + \xi^{1 - \rho}
		\end{align}	
	for $\xi \geq 0$ and thus for all $t$ and $s$ with $t \geq s$ we obtain
		\begin{align} \label{eq:exponentialTerm}
	 		\abs{\KK_{t, s}\of{\xi^2/2}} 
	 		&= 
	 		H s^{2H - 1} \xi^2 \e{-\xi^2\of{t^{2H}-s^{2H}} / 2}\e[\rho]{\xi^2\of{t^{2H}-s^{2H}} / 2}
	 		\nonumber \\ &\lesssim 
	 		\xi^{2 \rho} \of{t^{2H} - s^{2H}}^{\rho - 1} s^{2H - 1} \of{1 + \xi^{2\of{1 - \rho}}t^{2H\of{1 - \rho}}} \comma
		\end{align}
	where we used the estimate $t^{2H} - s^{2H} \leq t^{2H}$ in the last bracket.
	This implies that $\KK_{t,s}\of{\xi^2/2}$ grows at most polynomialy with respect to $\xi$ for all $t \geq s$,
	and hence $\abs{\KK_{t,s}\of{\xi^2/2}} \in \mathcal{S}^{\prime}\of{\R}$.
	Concerning the second property, we show that the integral
		\begin{align}
			\int_{0}^t \int_{\R} \abs{\KK_{t,s}\of{\xi^2/2} f\of{\xi}}  \dd{\xi} \dd{s}
		\end{align}
	is finite for every $t \geq 0$ and $f \in \mathcal{S}\of{\R}$. 
	By writing
		\begin{align*}
			\gamma_t\of{\xi} 
			= 
			\xi^{2p}\of{1 + \xi^{2\of{1 - \rho}}t^{2H\of{1 - \rho}}}
		\end{align*}
	and using estimate	\eqref{eq:exponentialTerm} one may directly find
		\begin{align*}
			\int_{0}^t \int_{\R} & \abs{\KK_{t,s}\of{\xi^2/2} f\of{\xi}}  \dd{\xi} \dd{s}
			\\ &\lesssim
  			\int_{0}^t \int_{\R} s^{2H - 1}\of{t^{2H} - s^{2H}}^{\rho - 1} \abs{f\of{\xi}} 
  			\productspacing
  			\gamma_t\of{\xi}  \dd{\xi} \dd{s}
  			\\ &=
     	\of{\int_{0}^t s^{2H - 1}\of{t^{2H} - s^{2H}}^{\rho - 1} \dd{s}} \of{\int_{\R} \abs{f\of{\xi}} 
			\productspacing     	
     	\gamma_t\of{\xi} \dd{\xi}}
     	<
     	\infty
     	\period
		\end{align*}
	Therefore, Theorem~\ref{result:evoEq_gen} is indeed applicable to the current setting and the governing equation for the gamma-grey noise is given by~\eqref{eq:eveq1} with the corresponding operator $\KK_{t,s}\of{-\Delta / 2}$ and $\sigma_\nu\of{t} \coloneqq t$. 
	For general $\nu$, such that the process $Z_t:=\int_0^t\nu(s)dX_s$, $t\geq0$, belongs to the class  $\mathfrak{DI}_\infty$, 
	the governing equation for the process $Z$ is given by~\eqref{eq:eveq1} with the same operator $\KK_{t,s}\of{-\Delta / 2}$ but with the time change  $\sigma_\nu\of{t} = \norm{M_-^H\of{\nu\1_{\of{s,t}}}}^{1/H}_{L^2(\R)}$.
\end{example}

\begin{example}[The Setting of Bender-Butko \cite{BB22}] 
	Let $k : \of{0,\infty} \times \of{0,\infty} \to \R$ be a Borel-measurable function  which is homogeneous of degree $\theta-1$ for some $\theta > 0$,
	i.e. $k\of{t,ts} = t^{\theta-1}k\of{1,s}$ for all $t > 0$ and $s \in (0,1)$. 
	Let also $k\of{1,\cdot}\in L^{1+\epsilon}\of{0,1}$ for some $\epsilon > 0$. 	
	Then, by Theorem~2 and Corollary~1 of~\cite{BB22}, the function 
		\begin{align} \label{eq:def_Phi}
			\Phi_k\of{t^\theta z}
			\coloneqq 
			\sum_{n=0}^\infty c_n \of{-z}^n \comma \largespace 
			c_0 \coloneqq 1 \comma \largespace 
			c_n \coloneqq c_{n-1}\int_0^1 k\of{1,s} s^{\theta(n-1)} \dd{s} \comma \largespace
			 n \in \N 
			 \comma
\end{align}
	is an entire function and satisfies the equality
		\begin{align}\label{eq:cor_Bebu}
			\Phi_k\of{t^\theta z}
			=
			1 - z \int_0^t k\of{t,s} \Phi_k\of{s^\theta z} \dd{s} \comma \largespace
			t > 0 \comma \largespace 
			z \in \C
			\period
\end{align}
	Assume additionally that the function $\Phi_k$ is completely monotone.
	Then there exists a nonnegative random variable $A$ such that $\laplace\bra{A} = \Phi_k$. 
	Then, if $\theta\in(0,2)$, equality~\eqref{eq:cor_Bebu} is just a special case of~\eqref{eq:L[A]} with $\KK_{t,s}\of{\xi} \coloneqq -k\of{t,s}\xi$ and $H \coloneqq \theta/2$.
  Note that because of $k\of{1, \cdot} \in L^{1 + \epsilon}\of{0,1} \subset L^{1}\of{0,1}$ this choice of $\KK_{t,s}$ is covered by the assumptions of Theorem~\ref{result:evoEq_gen}.
	It has been shown in~\cite{BB22} that  the function
		\begin{align*}
			u\of{t,x}
			\coloneqq
			\E\bra{u_0\of{x + \sqrt{A}B^H_t}}		
		\end{align*}
	solves the following generalized time fractional evolution  eqution (see Theorem~2 and discussion before Remark~2 in~\cite{BB22}):
	\begin{align*}
		u\of{t,x} = 
		u_0\of{x} + \frac{1}{2} \int_0^t k\of{t,s} \pardiff[2]{x} u\of{s,x} \dd{s}.
	\end{align*}
In particular, for $k\of{t,s} \coloneqq \frac{\alpha}{\beta \Gamma\of{\beta}} s^{\frac{\alpha}{\beta}-1} \of{t^{\frac{\alpha}{\beta}} - s^{\frac{\alpha}{\beta}}}^{\beta-1}$, $\beta \in \left(0,1\right]$, $\alpha \in \of{0,2}$, the corresponding process $\left(\sqrt{A}B^{H}_t\right)_{t \geq 0}$ with $H \coloneqq \alpha/2$, is a \emphasis{generalized grey Brownian motion}.
\end{example}

\begin{corollary} \label{result:evoEq}
	Let $u_0$ be in $\Sch\of{\R}$ and let $k$ be a homogeneous kernel of degree $2H-1 \in \of{-1,1}$, satisfying $k\of{1, \cdot} \in L^{1+\epsilon}\of{0,1}$ for some $\epsilon > 0$.
	Assume that the corresponding function $\Phi_k$, as defined in~\eqref{eq:def_Phi}, is completely monotone and the Laplace transform of $A$ is given by $\laplace\bra{A} = \Phi_k$.
	Let  $Z \in \mathfrak{DI}_\infty\of{X}$ be  given by integrating $\nu$, such that the function $\sigma_\nu$ in~\eqref{eq:sigma} is monotonically increasing. Then  the function $u$ as in formula~\eqref{eq:def_u} 	
	is a continuous  solution of the evolution equation
		\begin{align*}
			u\of{t,x} 
			= 
			u_0\of{x} + \int_0^t k\of{\sigma_\nu\of{t},\sigma_\nu\of{s}}
			\productspacing			
			\sigma_\nu^\prime\of{s} 
			\productspacing
			\frac{1}{2} \pardiff[2]{x} u\of{s,x} \dd{s} \comma \largespace 
			t > 0 \comma \largespace 
			x \in \R 
			\period
		\end{align*}
\end{corollary}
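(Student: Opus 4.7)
The plan is to derive the corollary as a direct application of Theorem~\ref{result:evoEq_gen} once the data are matched to the Bender--Butko example setup. Since $\Phi_k$ is assumed completely monotone with $\Phi_k = \laplace[A]$ and the degree of homogeneity is $2H - 1$, equation~\eqref{eq:cor_Bebu} with $\theta = 2H$ rewrites as
	\begin{align*}
		\laplace\bra{A}\of{t^{2H} \xi}
		=
		1 + \int_0^t \of{-k\of{t,s} \xi} \productspacing \laplace\bra{A}\of{s^{2H} \xi} \dd{s}
		\comma
	\end{align*}
so I would define $\KK_{t,s}\of{\xi} \coloneqq -k\of{t,s}\xi$. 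This is exactly the form required by~\eqref{eq:L[A]}.

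Next, I would verify the two technical hypotheses of Theorem~\ref{result:evoEq_gen}. The symbol $\abs{\KK_{t,s}\of{\xi^2/2}} = k\of{t,s}\xi^2/2$ (note $k \geq 0$ follows from the probabilistic interpretation via $\Phi_k$) is polynomial in $\xi$, hence lies in $\Sch^{\prime}\of{\R}$ for every fixed $t \geq s$. For condition~\eqref{eq:rhoCondition}, the homogeneity of $k$ gives $k\of{t,s} = t^{2H - 1} k\of{1 , s/t}$, so that
	\begin{align*}
		\int_0^t \abs{\KK_{t,\cdot}\of{\xi^2/2}}\of{f} \dd{s}
		=
		\of{\int_\R \frac{\xi^2}{2} f\of{\xi} \dd{\xi}}
		\productspacing
		t^{2H}
		\int_0^1 k\of{1,u} \dd{u}
		\comma
	\end{align*}
which is finite because $f \in \Sch\of{\R}$ and $k\of{1,\cdot} \in L^{1+\epsilon}\of{0,1} \subset L^1\of{0,1}$.

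Finally, I would identify the pseudo-differential operator: since $\fourier\bra{\tfrac{1}{2}\pardiff[2]{x} f}\of{\xi} = -\tfrac{\xi^2}{2} \fourier\bra{f}\of{\xi}$, the operator $\KK_{t,s}\of{-\Delta/2}$ with symbol $-k\of{t,s}\xi^2/2$ acts as $k\of{t,s} \productspacing \tfrac{1}{2}\pardiff[2]{x}$. Substituting this identification and $\KK_{\sigma_\nu\of{t},\sigma_\nu\of{s}} = -k\of{\sigma_\nu\of{t},\sigma_\nu\of{s}} \cdot$ into equation~\eqref{eq:eveq1} of Theorem~\ref{result:evoEq_gen} yields precisely the asserted evolution equation, with continuity of $u$ in $t$ and smoothness in $x$ inherited from that theorem. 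The main (minor) obstacle is the bookkeeping in the last step, since one has to be careful with the sign conventions relating $\KK$, the Fourier multiplier, and $\frac{1}{2}\pardiff[2]{x}$; once the identification $\KK_{t,s}\of{-\Delta/2} = k\of{t,s}\productspacing \frac{1}{2}\pardiff[2]{x}$ is established, the rest is a direct substitution.
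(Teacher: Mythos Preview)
Your approach is essentially the paper's own: the corollary is stated immediately after the Bender--Butko example, which already records that with $\KK_{t,s}(\xi)\coloneqq -k(t,s)\xi$ and $\theta=2H$ equation~\eqref{eq:cor_Bebu} becomes~\eqref{eq:L[A]} and that this choice of $\KK$ satisfies the hypotheses of Theorem~\ref{result:evoEq_gen}; your proof spells out exactly this reduction, including the identification $\KK_{t,s}(-\Delta/2)=k(t,s)\,\tfrac12\partial_x^2$.

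One small correction: the claim ``$k\geq 0$ follows from the probabilistic interpretation via $\Phi_k$'' is neither justified nor needed. Complete monotonicity of $\Phi_k$ forces the coefficients $c_n$ in~\eqref{eq:def_Phi} to be nonnegative (they are moments of $A$ divided by factorials), hence $\int_0^1 k(1,s)s^{\theta n}\,ds\geq 0$ for all $n$, but this does not give pointwise nonnegativity of $k$. Simply replace $k(t,s)$ by $|k(t,s)|$ in your verification of~\eqref{eq:rhoCondition}; the homogeneity argument and the inclusion $k(1,\cdot)\in L^{1+\epsilon}(0,1)\subset L^1(0,1)$ go through unchanged, and the polynomial-growth check for $\abs{\KK_{t,s}(\xi^2/2)}=|k(t,s)|\,\xi^2/2$ is likewise unaffected.
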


To make also a statement about homogeneous kernels of degree $\gamma \coloneqq \theta-1$ with $\theta > 2$, we observe the following:

\begin{lemma} \label{result:timeChange}
  Let $1/2 < H < 1$ and $\beta \geq 0$. 
  For $C \in \R$ consider $\nu\of{t} \coloneqq C t^\beta$. 
  Then
		\begin{align*}
			\norm{M^H_-\of{\nu\1_{\of{0,t}}}}_{2}^{1 / H} 
			= 
			C_{H, \beta} 
			\productspacing
			C^{1 / H}
			\productspacing
			t^{1 + \beta / H}  \comma \largespace
			t > 0
			\comma 
		\end{align*}
	where $C_{H, \beta}$ 
	is a constant, independent of $C$, which may be expressed via the beta function $\mathrm{B}$ as
		\begin{align*}
			C_{H, \beta} \coloneqq 
			\of{\frac{2H\of{2H-1}}{2\beta + 2H} 
			\productspacing			
			\mathrm{B}\of{\beta + 1, 2H - 1}}^{\frac{1}{2H}}
			\period
		\end{align*}
\end{lemma}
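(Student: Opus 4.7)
The plan is to use the differentiation identity from Lemma~\ref{result:regularityVar} (equation~\eqref{eq:new}), which is valid precisely because $H > 1/2$, and then perform a one-parameter substitution to recognise a beta integral. Since the statement concerns only the norm raised to a power, it suffices to compute $\norm{M^H_-(\nu \1_{\of{0,t}})}_2^2$ exactly and take the $(1/(2H))$-th power at the end.

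First I would plug $\nu(s) = C s^\beta$ into the derivative formula from Lemma~\ref{result:regularityVar}, obtaining
\begin{align*}
  \diff{t} \norm{M^H_-\of{\nu \1_{\of{0,t}}}}_2^2
  = 2H(2H-1) C^2 t^\beta \int_0^t s^\beta (t-s)^{2H-2} \dd{s} \period
\end{align*}
The inner integral is handled by the substitution $s = t u$, which gives
\begin{align*}
  \int_0^t s^\beta (t-s)^{2H-2} \dd{s}
  = t^{\beta + 2H - 1} \int_0^1 u^\beta (1-u)^{2H-2} \dd{u}
  = t^{\beta + 2H - 1} \mathrm{B}\of{\beta + 1,\, 2H-1} \comma
\end{align*}
where convergence of the beta integral is ensured by $\beta \geq 0$ and $2H - 1 > 0$. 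Hence the derivative equals $2H(2H-1) C^2 \mathrm{B}\of{\beta+1, 2H-1} \productspacing t^{2\beta + 2H - 1}$.

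Next I would integrate this in $t$ from $0$ (using that the norm vanishes at $t = 0$ by Lemma~\ref{result:MNorm} applied componentwise, or simply by continuity of the squared norm in $t$), yielding
\begin{align*}
  \norm{M^H_-\of{\nu \1_{\of{0,t}}}}_2^2
  = \frac{2H(2H-1)}{2\beta + 2H} C^2 \mathrm{B}\of{\beta+1,\, 2H-1} \productspacing t^{2\beta + 2H} \period
\end{align*}
Taking the $(1/(2H))$-th power and using $(2\beta + 2H)/(2H) = 1 + \beta/H$ and $C^{2/(2H)} = C^{1/H}$ then gives the claimed identity, with the constant $C_{H,\beta}$ matching exactly the expression in the statement.

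The computation is entirely routine once formula~\eqref{eq:new} is available; the only subtle point is to confirm the convergence of the beta integral and the application of \eqref{eq:new} for the specific choice $\nu(s) = C s^\beta$ (continuous on $[0, \infty)$, as required for $H \geq 1/2$). No real obstacle arises.
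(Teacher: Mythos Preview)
Your proof is correct and follows essentially the same route as the paper's. The only cosmetic difference is the starting point: the paper invokes the double-integral identity $\norm{M^H_-(\nu\1_{(0,t)})}_2^2 = 2H(2H-1)\int_0^t\int_0^\tau \nu(s)\nu(\tau)(\tau-s)^{2H-2}\,ds\,d\tau$ directly (citing Gripenberg--Norros), whereas you use the differentiated form~\eqref{eq:new} from Lemma~\ref{result:regularityVar} and then integrate back; after that, both arguments perform the same substitution to obtain the beta integral and conclude identically.
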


\begin{proof}
	By an identity provided in Proposition~2.2 in~\cite{GN96} we have for all $t > 0$ that
		\begin{align*}
    		\norm{M^H_-\of{\nu\1_{\of{0,t}}}}_{2}^2
     	= 
     	2H\of{2H-1}C^2\int_0^t \int_0^\tau s^\beta\tau^\beta \abs{s - \tau}^{2H-2} \dd{s} \dd{\tau} 
     	\period
		\end{align*}
	Using the substitution $s \to \tau s$, the inner integral can be seen to evaluate to
 		\begin{align*} 
			\tau^{2\beta + 2H - 1} \int_0^1 s^{\beta} \of{1 - s}^{2H - 2} \dd{s}
     	=
    		\tau^{2\beta + 2H - 1} \mathrm{B}\of{\beta + 1, 2H - 1}
   		\comma
  		\end{align*}
  	where $\mathrm{B}$ denotes the beta function. 
  	For details on the beta function see~\cite{GR07}, in particular subsection~3.38.
	Due to $\beta + 1 > 0$ and $2H - 1 > 0$ the above value of the beta function is finite.
	The outer integral can then be trivially computed to be 
		\begin{align*}
			\frac{2H\of{2H-1}c^2}{2\beta + 2H}
			\productspacing			
			\mathrm{B}\of{\beta + 1, 2H - 1}
			\productspacing
			t^{2\beta + 2H}
			\period
		\end{align*}
	Overall, this gives us
		\begin{align*}
			\norm{M^H_- \1_{\of{0,t}}\nu}_{2}^{1 / H} 
			= 
			C_{H, \beta} 
			\productspacing
			C^{1 / H}
			\productspacing
			t^{1 + \beta / H}
			\period
	\end{align*}
	\end{proof}

\begin{corollary}
	Let $u_0$ be in $\mathcal{S}\of{\R}$ and
	let $k$ be a homogeneous kernel with degree $\gamma > 1$. 
	Assume $k\of{1,\cdot} \in L^{1+\epsilon}\of{0,1}$ for some $\epsilon > 0$.
	Furthermore assume that the corresponding function $\Phi_k$, as defined in~\eqref{eq:def_Phi}, is completely monotone and the Laplace transform of $A$ is given by $\laplace\bra{A} = \Phi_k$.	
	Finally assume that $H = \of{1+\gamma}/ \of{2\gamma} \in \of{1/2,1}$. 
	Then the function 
		\begin{align}\label{eq:u_cor2}
			u \smallspace : \smallspace
			\left[0,\infty\right) \times \R \to \R \comma \largespace 
			\of{t,x} \mapsto \E\bra{u_0\of{x + \int_0^t C s^{\beta} \dd{X_s}}} 
		\end{align}
	with
    	\begin{align*}
      	\beta 
      	= 
      	\frac{1}{2}\of{\gamma - \gamma^{-1}} > 0
      	\largespace \textnormal{and} \largespace
   		C
      	=
    		\of{\mathrm{B}\of{\beta + 1, 2H - 1} H}^{-1/2}
      	=
      	C_{H,\beta}^{-H} 
   		\end{align*}
	is a continuous solution of the evolution equation
	\begin{align*}
		u\of{t,x} 
		= 
		u_0\of{x} + \int_0^t k\of{t,s} 
		\productspacing 
		\frac{1}{2} \pardiff[2]{x} u\of{s,x} \dd{s} \comma \largespace 
		t > 0  \comma \largespace 
		x \in \R 
		\period
	\end{align*}
\end{corollary}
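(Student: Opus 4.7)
The plan is to reduce the statement to Theorem~\ref{result:evoEq_gen} via a time-change argument that collapses $\sigma_\nu$ to a pure power and then recasts the Bender--Butko identity~\eqref{eq:cor_Bebu} into the form~\eqref{eq:L[A]} for the specific Hurst parameter $H = (1+\gamma)/(2\gamma) \in (1/2,1)$. With $\nu\of{s} \coloneqq C s^\beta$ continuous on $[0,\infty)$ and $H > 1/2$, the process $Z$ lies in $\mathfrak{DI}_\infty\of{X}$. A direct computation with $\beta = (\gamma - \gamma^{-1})/2$ and $H = (1+\gamma)/(2\gamma)$ gives $\beta/H = \gamma - 1$, so Lemma~\ref{result:timeChange} yields $\sigma_\nu\of{t} = C_{H,\beta} \productspacing C^{1/H} t^{1+\beta/H} = C_{H,\beta} \productspacing C^{1/H} t^\gamma$. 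The choice $C = C_{H,\beta}^{-H}$ (i.e.\ $C^{1/H} = C_{H,\beta}^{-1}$) eliminates the prefactor and produces $\sigma_\nu\of{t} = t^\gamma$, which is monotonically increasing with $\sigma_\nu'\of{s} = \gamma s^{\gamma-1}$.

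The next step is to rewrite the Bender--Butko relation~\eqref{eq:cor_Bebu} (which holds with $\theta = \gamma + 1$) in the form required by~\eqref{eq:L[A]}. The key observation is the identity $(\gamma+1)/\gamma = 2H$. Substituting $T = t^{1/\gamma}$ and then $s \mapsto \sigma^{1/\gamma}$ into~\eqref{eq:cor_Bebu} yields
	\begin{align*}
		\laplace\bra{A}\of{t^{2H}\xi}
		=
		1 - \frac{\xi}{\gamma}\int_0^t k\of{t^{1/\gamma} , \sigma^{1/\gamma}}
		\productspacing
		\sigma^{1/\gamma - 1}
		\productspacing
		\laplace\bra{A}\of{\sigma^{2H}\xi} \dd{\sigma}
		\comma
	\end{align*}
identifying $\KK_{t,s}\of{\xi} \coloneqq -\of{\xi/\gamma} \productspacing k\of{t^{1/\gamma},s^{1/\gamma}} \productspacing s^{1/\gamma - 1}$. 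The temperateness of $\xi \mapsto \abs{\KK_{t,s}\of{\xi^2/2}}$ is immediate from its polynomial dependence on $\xi$. To verify~\eqref{eq:rhoCondition}, I use homogeneity $k\of{t^{1/\gamma},s^{1/\gamma}} = t \productspacing k\of{1, \of{s/t}^{1/\gamma}}$ and the substitution $u = (s/t)^{1/\gamma}$, which converts $\int_0^t \abs{k\of{1,(s/t)^{1/\gamma}}} \productspacing s^{1/\gamma - 1} \dd{s}$ into $\gamma \productspacing t^{1/\gamma} \int_0^1 \abs{k\of{1,u}} \dd{u}$; the latter is finite since $k\of{1,\cdot} \in L^{1+\epsilon}\of{0,1} \subset L^1\of{0,1}$.

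All hypotheses of Theorem~\ref{result:evoEq_gen} then hold, and the conclusion follows from an algebraic simplification that is the whole point of the construction:
	\begin{align*}
		\sigma_\nu'\of{s} \productspacing \KK_{\sigma_\nu\of{t}, \sigma_\nu\of{s}}\of{\xi^2/2}
		=
		\gamma s^{\gamma-1} \cdot \of{-\frac{\xi^2/2}{\gamma} \productspacing k\of{t,s} \productspacing s^{1-\gamma}}
		=
		-\frac{\xi^2}{2} \productspacing k\of{t,s}
		\comma
	\end{align*}
where I used the homogeneity identity $k\of{(t^\gamma)^{1/\gamma}, (s^\gamma)^{1/\gamma}} = k\of{t,s}$. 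The associated pseudo-differential operator is the multiplier $k\of{t,s} \cdot \frac{1}{2} \pardiff[2]{x}$, so equation~\eqref{eq:eveq1} collapses exactly to the claimed evolution equation, and the continuity of $u$ in $t$ is inherited from Theorem~\ref{result:evoEq_gen}. The main obstacle is the change-of-variable bookkeeping in the second step: one has to set up $\beta$, $H$, $C$, and $\sigma_\nu$ so that the two $1/\gamma$ exponents introduced in the substitution and the $\sigma_\nu'\of{s}$ factor together cancel out the $s^{1/\gamma-1}$ weight, leaving precisely $k\of{t,s}$ in the final equation.
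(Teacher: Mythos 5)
Your argument is correct, and it rests on the same two pillars as the paper's proof: the substitution $s\mapsto s^{1/\gamma}$ that converts the degree-$\gamma$ Bender--Butko identity~\eqref{eq:cor_Bebu} (with $\theta=\gamma+1$) into a relation compatible with $2H=(1+\gamma)/\gamma$, and Lemma~\ref{result:timeChange} with the calibrated constants $\beta=(\gamma-\gamma^{-1})/2$, $C=C_{H,\beta}^{-H}$ forcing $\sigma_\nu\of{t}=t^\gamma$. The difference is in the routing: the paper packages the substituted kernel as $\kappa\of{t,s}=k\of{t^{1/\gamma},s^{1/\gamma}}/(\gamma s^{(\gamma-1)/\gamma})$, checks that $\kappa$ is homogeneous of degree $\gamma^{-1}\in\of{0,1}$ with $\kappa\of{1,\cdot}\in L^{1+\delta}\of{0,1}$ (via H\"older) and that $\Phi_\kappa=\Phi_k$, and then invokes Corollary~\ref{result:evoEq}; you instead feed $\KK_{t,s}\of{\xi}=-\xi\kappa\of{t,s}$ directly into Theorem~\ref{result:evoEq_gen}, deriving~\eqref{eq:L[A]} by substitution in~\eqref{eq:cor_Bebu} and verifying~\eqref{eq:rhoCondition} by homogeneity, which only needs $k\of{1,\cdot}\in L^1\of{0,1}$. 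Your route is marginally leaner --- it avoids the H\"older step and the verification that $\Phi_\kappa$ coincides with $\Phi_k$ --- while the paper's route has the advantage of reusing the already-established Corollary~\ref{result:evoEq} as a black box. Both are sound; the final algebra $\sigma_\nu^\prime\of{s}\productspacing\KK_{\sigma_\nu\of{t},\sigma_\nu\of{s}}\of{\xi^2/2}=-\tfrac{\xi^2}{2}k\of{t,s}$ is exactly the cancellation $\gamma\kappa\of{t^\gamma,s^\gamma}s^{\gamma-1}=k\of{t,s}$ on which the paper's proof also closes.
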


\begin{proof}
	We consider the kernel $\kappa$ given by 
		\begin{align*}
			\kappa\of{t,s} 
			\coloneqq 
			\frac{k\of{t^{1 / \gamma},s^{1 / \gamma}}}{\gamma s^{\of{\gamma - 1}/ \gamma}} 
			\period
		\end{align*}
	By construction, $\kappa$ is homogenous with degree $\gamma^{-1} \in \of{0,1}$.
	Using the substitution $s^{\frac{1}{\gamma}}$, together with the H\"older inequality, we can see that for $\delta \in \of{0,\epsilon}$ small enough it is 
		\begin{align*}
			\int_0^1 \abs{\kappa\of{1,s}}^{1 + \delta} \dd{s} 
			&= 
			\gamma^{-\delta} \int_0^1 \abs{k\of{1,s}}^{1 + \delta} s^{\delta\of{1 - \gamma}} \dd{s} 
			\\ &\leq 
			\gamma^{-\delta} \of{\int_0^1 \abs{k\of{1,s}}^{1 + \epsilon} \dd{s}}^{\frac{1 + \delta}{1 + \epsilon}} 
			\of{\int_0^1 s^{\frac{\delta}{\epsilon - \delta}\of{1 - \gamma}\of{1 + \epsilon}} \dd{s}}^{\frac{\epsilon - \delta}{1 + \epsilon}} 
			< 
			\infty 
			\period
		\end{align*}
	Therefore, the kernel $\kappa$ satisfies $\kappa\of{1,\cdot} \in L^{1 + \delta}\of{0,1}$. 
	Using the substitution $s^{\frac{1}{\gamma}}$ once more shows 
		\begin{align*}
			\int_0^1 \kappa\of{1,s} s^{\of{\gamma^{-1} + 1}n} \dd{s} 
			= 
			\int_0^1 k\of{1,s} s^{\of{\gamma + 1}n} \dd{s} 
		\end{align*}
	for all $n \in \N$. 
	Thus, the function $\Phi_\kappa$ exists as in~\eqref{eq:def_Phi} and coincides with $\Phi_k$.
	Then by Theorem~\ref{result:evoEq} and the previous Lemma the function $u$ given by~\eqref{eq:u_cor2} solves the evolution equation
		\begin{align*}
			u\of{t,x} 
			= 
			u_0\of{x} + \int_0^t \gamma \kappa\of{t^{\gamma},s^{\gamma}} s^{\gamma - 1} 
			\productspacing
			\frac{1}{2} \pardiff[2]{x} u\of{s,x} \dd{s} \comma \largespace 
			t > 0  \comma \largespace 
			x \in \R 
			\period
	\end{align*}
	This already proves the statement as $		\gamma \kappa\of{t^{\gamma},s^{\gamma}} s^{\gamma - 1} = k\of{t,s}$. 
\end{proof}

\begin{example}[Superstatistical Fractional Brownian Motion] \label{example:RP1}
	Let $x_0>0, \nu > 1/2$, $\rho > 1$.  
	Let $A_{x_0 , \nu , \rho}$ be a random variable with \emphasis{generalized gamma distribution}, i.e. with  the probability density function
		\begin{align*}
			f_{x_0,\nu,\rho}\of{a} 
			=
			\frac{\rho}{x_0^\nu \Gamma\of{\nu / \rho}} a^{\nu - 1} \e{-\of{a / x_0}^\rho} 
			\productspacing
			\1_{\of{0,\infty}}\of{a} 
			\period
		\end{align*}
	A general discussion of the generalized gamma distribution can be found in~\cite{S62}. 
	We note that, for our choice of parameters $x_0, \nu, \rho$, the Laplace transform $\laplace\bra{A_{x_0,\nu,\rho}}$ is an entire function and hence satisfies our assumptions. 
	The process $\sqrt{A_{x_0,\nu,\rho}}B^H_t$, $t \geq 0$, $H \in \of{0,1}$, where the fractional Brownian motion $B^H$ is independent from $A_{x_0,\nu,\rho}$, is called \emphasis{superstatistical fractional Brownian motion} with parameters $x_0, \nu, \rho$  (cf.~\cite{RVP22}). 
	To keep in line with~\cite{RVP22} we consider $X \coloneqq \sqrt{A_{2x_0,\nu,\rho}} B^H$, 
	i.e. a superstatistical fractional Brownian motion with parameters $2x_0$, $\nu$ and $\rho$.
	For each $t > 0$, $X_t$  has a probability density function $P^{2x_0, \nu, \rho}_{2H}\of{\cdot,t}$ given by 
		\begin{align*}
			P^{2x_0, \nu, \rho}_{2H}\of{t,x} 
			= 
	 		\int_0^\infty \of{4 \pi a t^{2H}}^{-1 / 2} \e{-\frac{x^2}{4at^{2H}}} 
			\productspacing
			f_{2x_0,\nu,\rho}\of{a} \dd{a}.
		\end{align*}
	Furthermore, one can see that $P^{2x_0, \nu, \rho}_{2H}\of{\cdot,x}$ is continuously differentiable for all $x \neq 0$. 

\begin{lemma} \label{lem:RunfPagn}
	The following identity is true for any $\delta > 0$, $x_0 > 0$, $\nu > 1/2$ and $\rho > 1$ such that $\frac{\nu}{\rho} - 1 < \frac{\delta}{2}$:
		\begin{align}\label{eq:RunfPagn}
			P^{2x_0, \nu+1, \rho}_{\delta}\of{t,x}
			=
			\frac{\Gamma\of{\nu / \rho}}{\Gamma\of{\of{\nu+1} / \rho}}
			\productspacing			
			_tD^{\frac{\nu}{\rho}-1, \frac{1}{\rho}}_{\delta \rho}
			\productspacing			
			P^{2x_0, \nu, \rho}_{\delta}\of{t,x} \comma \largespace 
			t > 0 \comma \largespace 
			x \in \R
			\comma
\end{align}
	where $_tD^{\gamma,\mu}_\eta$, $\mu,\eta>0$, $\gamma\in\R$, is the Erd\'elyi-Kober fractional operator with respect to time variable $t$ (cf. formula~(3.14) in~\cite{Lu4ko-Kiryakova}), i.e.
		\begin{align}\label{eq:EKoperator}
			_tD^{\gamma,\mu}_\eta\ffi\of{t} 
			=
			\frac{1}{2 \pi i}\int_{\gamma - i\infty}^{\gamma + i\infty} \frac{\Gamma\of{1 + \gamma + \mu - \of{s / \eta}}}{\Gamma(1+\gamma-(s/\eta))}
			\productspacing			
			\mellin_t\bra{\ffi}\of{s}
			\productspacing
			t^{-s} \dd{s}.
		\end{align}
\end{lemma}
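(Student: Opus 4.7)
The plan is to verify identity~\eqref{eq:RunfPagn} at the level of Mellin transforms with respect to the time variable $t$, and then invoke Mellin inversion. Since $P^{2x_0,\nu,\rho}_\delta\of{t,x}$ is defined by a nonnegative iterated integral, Tonelli's theorem allows me to swap the $a$- and $t$-integrations, so that
\begin{align*}
	\mellin_t\bra{P^{2x_0,\nu,\rho}_\delta\of{\cdot,x}}\of{s}
	=
	\int_0^\infty f_{2x_0,\nu,\rho}\of{a}\int_0^\infty t^{s-1}\of{4\pi a t^\delta}^{-1/2}\e{-x^2/(4at^\delta)}\dd{t}\dd{a}
	\period
\end{align*}
The inner $t$-integral, via the substitution $u=x^2/(4at^\delta)$, reduces to a standard gamma integral and evaluates to $\delta^{-1}(4\pi a)^{-1/2}\of{x^2/(4a)}^{s/\delta-1/2}\Gamma\of{1/2-s/\delta}$, which is valid on the strip $\RE s<\delta/2$. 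The outer integral is then a $(-s/\delta)$-th moment of the generalized gamma density: the change of variables $w=\of{a/(2x_0)}^\rho$ turns it into a gamma integral and produces $(2x_0)^{-s/\delta}\Gamma\of{(\nu-s/\delta)/\rho}/\Gamma\of{\nu/\rho}$, valid for $\RE s<\nu\delta$.

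Collecting everything gives
\begin{align*}
	\mellin_t\bra{P^{2x_0,\nu,\rho}_\delta\of{\cdot,x}}\of{s}
	=
	\Psi\of{s,x}\cdot\frac{\Gamma\of{(\nu-s/\delta)/\rho}}{\Gamma\of{\nu/\rho}}
	\comma
\end{align*}
where the prefactor $\Psi\of{s,x}\coloneqq\delta^{-1}(4\pi)^{-1/2}\Gamma\of{1/2-s/\delta}(x^2/4)^{s/\delta-1/2}(2x_0)^{-s/\delta}$ is independent of $\nu$. Replacing $\nu$ by $\nu+1$ and taking the ratio, I obtain
\begin{align*}
	\mellin_t\bra{P^{2x_0,\nu+1,\rho}_\delta\of{\cdot,x}}\of{s}
	=
	\frac{\Gamma\of{\nu/\rho}}{\Gamma\of{(\nu+1)/\rho}}\cdot\frac{\Gamma\of{(\nu+1)/\rho-s/(\delta\rho)}}{\Gamma\of{\nu/\rho-s/(\delta\rho)}}\cdot\mellin_t\bra{P^{2x_0,\nu,\rho}_\delta\of{\cdot,x}}\of{s}
	\period
\end{align*}
Setting $\gamma\coloneqq\nu/\rho-1$, $\mu\coloneqq 1/\rho$ and $\eta\coloneqq\delta\rho$, the gamma ratio is exactly the Mellin multiplier $\Gamma\of{1+\gamma+\mu-s/\eta}/\Gamma\of{1+\gamma-s/\eta}$ occurring in the definition~\eqref{eq:EKoperator} of the Erd\'elyi-Kober operator. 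Applying Mellin inversion on the vertical contour $\RE s=\gamma$ then yields precisely formula~\eqref{eq:RunfPagn}.

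The main technical point is the placement of the contour. The inversion formula in~\eqref{eq:EKoperator} requires the line $\RE s=\gamma=\nu/\rho-1$ to lie inside the strip of convergence of the Mellin transform computed above, which was $\RE s<\delta/2$; this is exactly the hypothesis $\nu/\rho-1<\delta/2$ of the lemma. I would still need to check sufficient decay in $\abs{\IM s}\to\infty$ along this contour to justify pointwise Mellin inversion in $t$: Stirling's formula applied to the three gamma factors $\Gamma\of{1/2-s/\delta}$, $\Gamma\of{(\nu-s/\delta)/\rho}$ and $\Gamma\of{(\nu+1-s/\delta)/\rho}$ gives super-polynomial decay, which suffices, but this verification is the most delicate step. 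Continuous differentiability of $P^{2x_0,\nu,\rho}_\delta\of{\cdot,x}$ for $x\neq0$ (already noted in the paper) and the explicit form of its Mellin transform then guarantee that the inversion reproduces the function pointwise.
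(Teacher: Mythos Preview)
Your argument is correct and follows essentially the same route as the paper's proof in the Appendix: compute the Mellin transform in $t$ of $P^{2x_0,\nu,\rho}_\delta(\cdot,x)$, observe that the ratio of the transforms for parameters $\nu+1$ and $\nu$ is exactly the Mellin multiplier $\Gamma(1+\gamma+\mu-s/\eta)/\Gamma(1+\gamma-s/\eta)$ defining ${}_tD^{\gamma,\mu}_\eta$, and invert along $\RE s=\nu/\rho-1$. The only cosmetic difference is that the paper expresses $P^{2x_0,\nu,\rho}_\delta$ through the Kr\"atzel function and then quotes its Mellin transform from~\cite{Princy}, whereas you obtain the same formula directly via Tonelli and two explicit gamma integrals; your added remark on Stirling decay to justify pointwise inversion is a welcome detail that the paper leaves implicit.
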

	The Mellin transform $\mellin_t$ will be clarified in Appendix below. The proof of the formula~\eqref{eq:RunfPagn} is outlined in~\cite{RVP22} (see formulas (3.9) and (4.3) in~\cite{RVP22}). 
	For the convenience of the reader, we present a more detailed proof in Appendix below.
	Now let $\nu\of{t} \coloneqq Ct^\beta$, with $\beta \geq 0$ and
  		\begin{align*}
  	  		C
    		=
    		C_{H,\beta}^{-H}
   		=
    		\of{\frac{2H\of{2H-1}}{2\beta + 2H} 
    		\productspacing
    		\mathrm{B}\of{\beta + 1, 2H - 1}}^{-1/2}
    		\period
  		\end{align*}
	Furthermore let $H > \frac{1}{2}$ if $\beta > 0$ and let $H \in \of{0,1}$ if $\beta = 0$. 
	Let $\delta \coloneqq 2H + 2\beta$. 
	By Lemma~\ref{result:timeChange} we have $v_\nu\of{t} = t^\delta$. 
	Then $Z_t \coloneqq \int_0^t \nu\of{s} \dd{X_s}$, $t \geq 0$,
is in $\mathfrak{DI}_\infty\of{X}$. 

	\begin{proposition}\label{result:prop1}
		In the above setting, let $u_0\in \Sch\of{\R}$. 
		The function $u$ given by~\eqref{eq:def_u} solves the following evolution equation 
			\begin{align}\label{eq:evEq2}
				u\of{t,x)}
				=
				u_0\of{x} + \delta x_0 \int_0^t t^{\delta-1} \pardiff[2]{x} \bra{_sD^{\frac{\nu}{\rho}-1,\,\frac{1}{\rho}}_{\delta \rho} u\of{s,x}} \dd{s}
				\period
			\end{align}
	\end{proposition}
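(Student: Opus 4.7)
The plan is to take the evolution identity~\eqref{eq:itoApp}---itself a direct consequence of Corollary~\ref{result:itoFormula_cor} applied to $V \coloneqq u_0 \in \Sch\of{\R}$, whose Schwartz decay trivially provides the admissibility bound---as the starting point. Since Lemma~\ref{result:timeChange} gives $v_\nu\of{s} \coloneqq \norm{M_{-}^{H}\of{\nu\1_{\of{0,s}}}}^2_{2} = s^\delta$, identity~\eqref{eq:itoApp} reduces to
\begin{align*}
	u\of{t,x}
	=
	u_0\of{x} + \frac{\delta}{2} \int_0^t s^{\delta - 1} \productspacing \pardiff[2]{x} \tilde{u}\of{s,x} \dd{s},
\end{align*}
with $\tilde{u}$ as in~\eqref{eq:u-tilde}. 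Hence the whole task reduces to proving the pointwise identity $\tilde{u}\of{s,x} = 2 x_0 \productspacing {_sD^{\nu/\rho - 1, \, 1/\rho}_{\delta\rho}} u\of{s,x}$, from which~\eqref{eq:evEq2} will follow directly (with the $t^{\delta-1}$ in the statement read as the typographic shorthand for $s^{\delta-1}$).

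To establish this identity, I would first recognize both $u$ and $\tilde{u}$ as spatial convolutions against densities from the family $P^{\cdot,\cdot,\rho}_\delta$. Conditionally on $A = a$, the centered Gaussian $Z_s$ has variance $a s^\delta$; the substitution $a \mapsto 2b$ in the resulting mixing integral, combined with the scaling relation $2 f_{2 x_0,\nu,\rho}\of{2b} = f_{x_0,\nu,\rho}\of{b}$, identifies the unconditional density of $Z_s$ with $P^{x_0,\nu,\rho}_\delta\of{s,\cdot}$, so $u\of{s,x} = \int_\R u_0\of{x+z} P^{x_0,\nu,\rho}_\delta\of{s,z} \dd{z}$. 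For $\tilde{u}$ the extra factor of $A$ is absorbed via the elementary identity
\begin{align*}
	a \productspacing f_{2 x_0,\nu,\rho}\of{a}
	=
	2 x_0 \productspacing \frac{\Gamma\of{\of{\nu+1}/\rho}}{\Gamma\of{\nu/\rho}} \productspacing f_{2 x_0,\nu+1,\rho}\of{a},
\end{align*}
which, after the same substitution, gives $\tilde{u}\of{s,x} = 2 x_0 \productspacing \frac{\Gamma\of{\of{\nu+1}/\rho}}{\Gamma\of{\nu/\rho}} \int_\R u_0\of{x+z} P^{x_0,\nu+1,\rho}_\delta\of{s,z} \dd{z}$.

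The key ingredient is then Lemma~\ref{lem:RunfPagn}, which I would apply with first parameter $x_0$ in place of $2 x_0$ (the statement and its proof go through verbatim for any positive first parameter). It rewrites $P^{x_0,\nu+1,\rho}_\delta$ as $\frac{\Gamma\of{\nu/\rho}}{\Gamma\of{\of{\nu+1}/\rho}} \productspacing {_sD^{\nu/\rho - 1, \, 1/\rho}_{\delta\rho}} P^{x_0,\nu,\rho}_\delta$; the two gamma prefactors cancel, leaving $\tilde{u}\of{s,x} = 2 x_0 \int_\R u_0\of{x+z} \of{{_sD^{\nu/\rho - 1, \, 1/\rho}_{\delta\rho}} P^{x_0,\nu,\rho}_\delta}\of{s,z} \dd{z}$. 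I would then commute the Erd\'elyi--Kober operator, which acts only on the time variable, with the spatial integration in $z$ to obtain $\tilde{u}\of{s,x} = 2 x_0 \productspacing {_sD^{\nu/\rho - 1, \, 1/\rho}_{\delta\rho}} u\of{s,x}$, and substituting this back into the It\^o identity above yields~\eqref{eq:evEq2}.

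The main obstacle is this last interchange. The Erd\'elyi--Kober operator is nonlocal in $s$ and defined through the contour integral of a Mellin transform in~\eqref{eq:EKoperator}, so commuting it past $\int_\R u_0\of{x + z} \cdot \dd{z}$ calls for a Fubini argument using joint integrability, along the Mellin contour and in $z$, of the product $\abs{u_0\of{x+z}} \productspacing \abs{\mellin_s\bra{P^{x_0,\nu,\rho}_\delta\of{\cdot,z}}\of{s}}$. The Schwartz decay of $u_0$, combined with the controlled asymptotics of the generalized-gamma mixture $P^{x_0,\nu,\rho}_\delta$ (in the spirit of the estimates used in the proof of Lemma~\ref{lem:RunfPagn}), should make this justifiable, but it is the technical heart of the argument.
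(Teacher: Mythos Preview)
Your proposal is correct and follows essentially the same route as the paper: reduce to the identity $\tilde{u}=2x_0\,{_sD^{\nu/\rho-1,1/\rho}_{\delta\rho}}u$, obtain it via the relation $a\,f_{2x_0,\nu,\rho}(a)=2x_0\,\tfrac{\Gamma((\nu+1)/\rho)}{\Gamma(\nu/\rho)}\,f_{2x_0,\nu+1,\rho}(a)$ together with Lemma~\ref{lem:RunfPagn}, and substitute back into~\eqref{eq:itoApp}. The only differences are cosmetic: you perform the extra substitution $a\mapsto 2b$ to work with first parameter $x_0$ rather than $2x_0$ (the paper stays at $2x_0$ throughout), and you explicitly flag the commutation of the Erd\'elyi--Kober operator with the spatial convolution as a point needing justification, whereas the paper passes over it silently.
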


	\begin{proof}
		Let us calculate $\tilde{u}$ as in formula~\eqref{eq:u-tilde}. 
		By formula~\eqref{eq:RunfPagn} we have
			\begin{align*}
				\tilde{u}\of{t,x}
				&=
				\E\bra{Au_0\of{x + Z_t}}
				\\ &=
				\int_\R \int_0^\infty a u_0\of{x + \sqrt{a} y} \of{2\pi t^{\delta}}^{-1/2}\exp\of{-\frac{y^2}{2t^\delta}} 
				\productspacing				
				f_{2x_0,\nu,\rho}\of{a}\dd{a} \dd{y}
				\\ &= 
				\int_\R u_0\of{x + z} \of{\int_0^\infty \of{2\pi a t^{\delta}}^{-1/2}\exp\of{-\frac{z^2}{2at^\delta}} 
				\productspacing
				a f_{2x_0,\nu,\rho}\of{a}} \dd{z}
				\\ &=
				\int_\R u_0\of{x+z} \frac{2x_0 \Gamma\of{\of{\nu+1} / \rho}}{\Gamma\of{\nu / \rho}} 
				\productspacing
				P^{2x_0,\nu+1,\rho}_{\delta}\of{t,z} \dd{z}
				\\ &= 
				2x_0 \productspacing _tD^{\frac{\nu}{\rho}-1, \frac{1}{\rho}}_{\delta \rho} u\of{t,x}. 
			\end{align*}
		The statement then follows from formula~\eqref{eq:itoApp}.
	\end{proof}

	Note that in the case $\beta=0$ equation~\eqref{eq:evEq2} implies equation~(4.5) of~\cite{RVP22} for the probability density function $P^{2x_0,\nu+1,\rho}_{2H}$.
\end{example}

\begin{example}
	In the setting of Example~\ref{example:RP1} consider the process $\ZZ_t \coloneqq \exp\of{Z_t}$. Let $u_0 \in \Sch\of{\R}$. 
	Consider
		\begin{align}\label{eq:myZ}
			u\of{t,x} 
			\coloneqq
			\E\bra{u_0\of{x\ZZ_t}} \comma \largespace
			t \geq 0 \comma \largespace 
			x \in \R.
		\end{align}
	Using the It\^o formula as in Theorem~\ref{result:itoFormula} and taking expectation  we obtain 
		\begin{align*}
			u\of{t,x}
			&=
			u_0\of{x} + \frac{\delta}{2}\int_0^t s^{\delta-1} \E\bra{A\of{ u^{\prime\prime}_0\of{x \ZZ_s} \of{x \ZZ_s}^2 + u^{\prime}_0\of{x \ZZ_s} x \ZZ_s}} \dd{s}
			\\ &=
			u_0\of{x} + \frac{\delta}{2} \int_0^t s^{\delta-1} \E\bra{A\of{x^2 \pardiff[2]{x} u_0\of{x\ZZ_s} + x \pardiff{x} u_0\of{x\ZZ_s}}} \dd{s}
			\\ &=
			u_0\of{x} + \frac{\delta}{2} \int_0^t s^{\delta-1} \of{x^2 \pardiff[2]{x} \E\bra{A u_0\of{x\ZZ_s}} +x \pardiff{x} \E\bra{Au_0\of{x\ZZ_s}}} \dd{s}
			\period
		\end{align*}
	In a similar way as in the proof of Proposition~\ref{result:prop1}, we obtain by formula~\eqref{eq:RunfPagn} that $u$, given by~\eqref{eq:myZ}, satisfies the evolution equation
		\begin{align*}
			u\of{t,x}
			=
			u_0\of{x} + \delta x_0 \int_0^t s^{\delta-1} \of{x^2 \pardiff[2]{x}
			\productspacing 
			_sD^{\frac{\nu}{\rho}-1, \frac{1}{\rho}}_{\delta \rho}u\of{s,x} + x \pardiff{x} 
			\productspacing 
			_sD^{\frac{\nu}{\rho}-1,\,\frac{1}{\rho}}_{\delta \rho}u\of{s,x}} \dd{s}
			\period
		\end{align*}
\end{example}

%%%% Back Matters

\section*{Appendix}
Let us prove Lemma~\ref{lem:RunfPagn}. 
Consider the \emphasis{Kr\"atzel function} with parameters $\nu > 0$, $\rho > 0$:
	\begin{align*}
		Z^\nu_\rho\of{u}
		\coloneqq
		\int_0^\infty \lambda^{\nu-1} \e{-\frac{u}{\lambda}} \e{-\lambda^\rho}\dd{\lambda}
		\comma \largespace u > 0
		\period
	\end{align*}
Then the function 
	\begin{align}\label{KraetzelPDF}
		\widetilde{f}^\nu_\rho\of{x}
		\coloneqq
		\frac{\rho}{\Gamma\of{\frac{\nu+1}{\rho}}} Z^\nu_\rho\of{x} \1_{\of{0,\infty}}\of{x}
	\end{align}
is a probability density function, cf.~\cite{Princy}.
And we have
	\begin{align*}
		P^{2x_0,\nu,\rho}_\delta\of{t,x}
		&=
		\int_0^\infty \frac{1}{\sqrt{4 \pi a t^{\delta}}} \e{-\frac{x^2}{4at^{\delta}}} 
		\productspacing		
		f_{2x_0,\nu,\rho}\of{a} \dd{a}
		\\ &=
		\frac{\rho}{\Gamma\of{\frac{\nu}{\rho}}\sqrt{4\pi x_0 t^\delta}}Z^{\nu-1/2}_\rho\of{\frac{x^2}{4x_0t^\delta}}
		\period
	\end{align*}
For a sufficiently good function $f : \left[0,+\infty\right) \to \R$, the \emphasis{Mellin integral transform} is defined as
	\begin{align*}
		\mellin_t\bra{t}\of{s}
		\coloneqq
		\int_0^{\infty} f\of{t} \productspacing t^{s-1} \dd{t}
		\comma
	\end{align*}
and the \emphasis{inverse Mellin integral transform} as
	\begin{align*}
		f\of{t}
		=
		\frac{1}{2\pi i}\int_{\gamma-i\infty}^{\gamma+i\infty} \of{\mellin_t\bra{t}\of{s}} \productspacing t^{-s}ds,
	\end{align*}
for some suitable $\gamma \in \R$ (see~\cite{Lu4ko-Kiryakova} for the details). 
In particular, it is known (cf.~\cite{Princy}) that the Mellin transform of function $\widetilde{f}^\nu_\rho$ in~\eqref{KraetzelPDF} is given for any $\nu,\rho > 0$ and $s$ such that $\RE s > 0$  by 
	\begin{align*}
		\mellin_t\bra{\widetilde{f}^\nu_\rho}\of{s}
		=
		\frac{\Gamma\of{s}\Gamma\of{\of{s+\nu} / \rho}}{\Gamma\of{\of{\nu+1} / \rho}}
		\period
	\end{align*}
Therefore, in the case $\delta > 0$, $x_0 > 0$, $\rho > 1$, $\nu > 1/2$ and $\frac{\nu}{\rho} - 1 < \frac{\delta}{2}$, we have for $s$ such that $\RE s < \delta / 2$
	\begin{align*}
		\mellin_t &\bra{P^{2x_0, \nu, \rho}_\delta\of{t,x}}\of{s}
		\\ &=
		\int_0^{\infty}\frac{\rho}{\Gamma\of{\nu/\rho}} \of{4\pi x_0 t^\delta}^{-1/2}Z^{\nu-1/2}_\rho\of{\frac{x^2}{4x_0t^\delta}}t^{s-1} \dd{t}
		\\ &=
		\frac{\Gamma\of{\of{\nu + 1/2}/ \rho}}{\delta\Gamma\of{\nu/\rho}} \of{4\pi x_0}^{-1/2} \of{\frac{4x_0}{x^2}}^{\frac{1}{2} - \frac{s}{\delta}}
		\int_0^{\infty}\widetilde{f}^{\nu-1/2}_\rho\of{\tau}\tau^{-\frac{s}{\delta}-\frac{1}{2}} \dd{\tau}
		\\ &=
		\frac{\Gamma\of{\of{\nu+1/2}/\rho}}{\delta\Gamma\of{\nu/\rho}} \of{4\pi x_0}^{-1/2} \of{\frac{4x_0}{x^2}}^{\frac{1}{2} - \frac{s}{\delta}}
		\mellin_t\bra{\widetilde{f}^{\nu-1/2}_\rho} \of{-\frac{s}{\delta}+\frac{1}{2}}
		\\ &=
		\frac{\Gamma\of{\frac{1}{2} - \frac{s}{\delta}} \Gamma\of{\frac{\nu}{\rho}-\frac{s}{\rho\delta}}}{\delta\sqrt{4\pi x_0} \mediumspace \Gamma\of{\nu/\rho}}\of{\frac{x^2}{4x_0}}^{\frac{s}{\delta}-\frac{1}{2}}
		\period
	\end{align*}
Hence, the right hand side of formula~\eqref{eq:RunfPagn}, that is
	\begin{align*}
		\frac{\Gamma\of{\nu / \rho}}{\Gamma\of{\of{\nu+1}/\rho}} \,_tD^{\frac{\nu}{\rho} - 1, \frac{1}{\rho}}_{\delta \rho} P^{2x_0, \nu, \rho}_{\delta}\of{t,x}
		\comma
	\end{align*}
has the following view:
	\begin{align*}
		&\frac{\Gamma\of{\nu/\rho}}{\Gamma\of{\of{\nu+1}/\rho}}\frac{1}{2\pi i}\int\limits_{\frac{\nu}{\rho} - 1 - i\infty}^{\frac{\nu}{\rho} - 1 + i\infty} \frac{\Gamma\of{\frac{\nu + 1 - s/\delta}{\rho}}}{\Gamma\of{\frac{\nu - s/\delta}{\rho}}} \mellin_t\bra{P^{2x_0, \nu, \rho}_{\delta} \of{t,x}}\of{s} \productspacing t^{-s} \dd{s}
		\\&=
		\frac{1}{2\pi i \delta\Gamma\of{\of{\nu+1}/\rho} \sqrt{4\pi x_0}}\int\limits_{\frac{\nu}{\rho} - 1 - i\infty}^{\frac{\nu}{\rho} - 1 + i\infty}\Gamma\of{\frac{\nu+1-\frac{s}{\delta}}{\rho}} \Gamma\of{\frac{1}{2} - \frac{s}{\delta}} \of{\frac{z^2}{4x_0}}^{\frac{s}{\delta}-\frac{1}{2}} t^{-s} \dd{s}
		\period
\end{align*}
On the other hand, the left hand side of formula~\eqref{eq:RunfPagn} has the following view 
	\begin{align*}
		&P^{2x_0, \nu+1, \rho}_{\delta}\of{t,x}
		=
		\frac{1}{2\pi i}\int_{\gamma-i\infty}^{\gamma+i\infty}\mellin_t\bra{P^{2x_0, \nu+1, \rho}_{\delta}\of{t,x}}\of{s} t^{-s} \dd{s}
		\\ &=
		\frac{1}{2\pi i \delta\Gamma\of{\of{\nu + 1} / \rho} \sqrt{4\pi x_0}}\int\limits_{\gamma - i\infty}^{\gamma + i\infty} \Gamma\of{\frac{\nu + 1 - \frac{s}{\delta}}{\rho}} \Gamma\of{\frac{1}{2}-\frac{s}{\delta}} \of{\frac{x^2}{4x_0}}^{\frac{s}{\delta}-\frac{1}{2}}t^{-s} \dd{s}
		\period
	\end{align*}
where one can take any $\gamma < \delta / 2$, in particular $\gamma \coloneqq \frac{\nu}{\rho} - 1$. 
Therefore, the right hand side and the left hand side of~\eqref{eq:RunfPagn} coincide.

%\newpage

\printbibliography

\end{document}